\newtheorem{thm}{Theorem}[section]
\newtheorem{lem}[thm]{Lemma}
\newtheorem{pro}[thm]{Proposition}
\theoremstyle{definition}
\numberwithin{equation}{section}
\newcommand{\re}{\textup{Re}}
\newcommand{\im}{\textup{Im}}
\newcommand{\mb}{\mathbb}
\newcommand{\mc}{\mathcal}
\newcommand{\mbf}{\mathbf}
\newcommand{\e}{\varepsilon}
\renewcommand{\bar}{\overline}
\begin{document}

\baselineskip=17pt

\title[Large odd order character sums]{Large odd order character sums and improvements of the P\'{o}lya-Vinogradov inequality}

\author{Youness Lamzouri}

\author{Alexander P.  Mangerel}

\address{Department of Mathematics and Statistics,
York University,
4700 Keele Street,
Toronto, ON,
M3J1P3
Canada}

\email{lamzouri@mathstat.yorku.ca}

\address{Department of Mathematics\\ University of Toronto\\
Toronto, Ontario, Canada}
\email{sacha.mangerel@mail.utoronto.ca}

\date{}

\begin{abstract}  
For a primitive Dirichlet character $\chi$ modulo $q$, we define $M(\chi)=\max_{t } |\sum_{n \leq t} \chi(n)|$. In this paper, we study this quantity for characters of a fixed odd order $g\geq 3$. Our main result provides a further improvement of the classical P\'{o}lya-Vinogradov inequality in this case. More specifically, we show that for any such character $\chi$ we have
$$M(\chi)\ll_{\varepsilon} \sqrt{q}(\log q)^{1-\delta_g}(\log\log q)^{-1/4+\varepsilon},$$
where $\delta_g := 1-\frac{g}{\pi}\sin(\pi/g)$. This 
improves upon the works of Granville and Soundararajan and of Goldmakher. Furthermore, assuming the Generalized Riemann hypothesis (GRH) we prove that
$$
M(\chi) \ll \sqrt{q} \left(\log_2 q\right)^{1-\delta_g} \left(\log_3 q\right)^{-\frac{1}{4}}\left(\log_4 q\right)^{O(1)},
$$
where $\log_j$ is the $j$-th iterated logarithm. We also show unconditionally that this bound is best possible (up to a power of $\log_4 q$). One of the key ingredients in the proof of the upper bounds is a new Hal\'asz-type inequality for logarithmic mean values of completely multiplicative functions, which might be of independent interest.
\end{abstract}

\subjclass[2010]{Primary 11L40.}

\thanks{The first author is partially supported by a Discovery Grant from the Natural Sciences and Engineering Research Council of Canada.}

\maketitle
\section{Introduction}
The study of Dirichlet characters and their sums has been a central topic in analytic number theory for a long time. Let $q\geq 2$ and  $\chi$ be a non-principal Dirichlet character modulo $q$. An important quantity associated to $\chi$ is 
$$M(\chi) := \max_{t \leq q} \left|\sum_{n \leq t} \chi(n) \right|.$$
The best-known upper bound for $M(\chi)$, obtained independently by P\'olya and Vinogradov in 1918, reads
\begin{equation}\label{PVORIG}
M(\chi)  \ll \sqrt{q} \log q.
\end{equation}
Though one can establish this inequality using only basic Fourier analysis, improving on it has proved to be a difficult problem, and resisted substantial progress for several decades. Conditionally on the Generalized Riemann Hypothesis (GRH),  Montgomery and Vaughan \cite{MV2} showed in 1977 that
\begin{equation}\label{MVCHARBOUND}
M(\chi)\ll \sqrt{q}\log\log q.
\end{equation}
This bound is best possible in view of an old result of Paley \cite{Pa} that there exists an infinite family of primitive quadratic characters $\chi \bmod q$ such that 
\begin{equation}\label{Paley}
M(\chi) \gg \sqrt{q} \log \log q.
\end{equation} 
Assuming GRH, Granville and Soundararajan \cite{GrSo2} extended Paley's result to characters of a fixed even order $2k\geq 4$.  The assumption of GRH was later removed by Goldmakher and Lamzouri \cite{GL2}, who obtained this result unconditionally, and subsequently Lamzouri \cite{LAM} obtained the optimal implicit constant in \eqref{Paley} for even order characters. 

The situation is quite different for odd order characters. In this case, Granville and Soundararajan \cite{GrSo2} proved the remarkable result that both the P\'{o}lya-Vinogradov and the Montgomery-Vaughan bounds can be improved. More specifically, if $g\geq 3$ is an odd integer, and $\chi$ is a primitive character of order $g$ and conductor $q$ then they showed that
\begin{equation} \label{GSUP}
M(\chi) \ll \sqrt{q} (\log Q)^{1-\frac{\delta_g}{2} + o(1)},
\end{equation}
where $\delta_g := 1-\frac{g}{\pi}\sin(\pi/g)$ and 
\begin{equation}\label{THEQ}
Q := \begin{cases}  q &\text{ unconditionally}, \\  \log q &\text{ on GRH}. \end{cases}
\end{equation}
By refining their method, Goldmakher \cite{GOLD} was able to obtain the improved bound
\begin{equation} \label{GOLDSUP}
M(\chi) \ll \sqrt{q} (\log Q)^{1-\delta_g + o(1)}.
\end{equation}
Our first result gives a further improvement of the P\'olya-Vinogradov inequality for $M(\chi)$ when $\chi$ has odd order $g\geq 3$. Here and throughout, we write $\log_k x = \log(\log_{k-1} x)$ to denote the $k$th iterated logarithm, where $\log_1 x=\log x$.
\begin{thm}\label{MCHIUP1}
Let $g \geq 3$ be a fixed odd integer, and let $\e > 0$ be small. Then, for any primitive Dirichlet character $\chi$ of order $g$ and conductor $q$  we have
$$
M(\chi) \ll_{\e} \sqrt{q} \left(\log q\right)^{1-\delta_g}(\log\log  q)^{-\frac{1}{4}+ \e}.
$$
\end{thm}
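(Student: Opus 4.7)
The plan is to build on the pretentious multiplicative function framework of Granville-Soundararajan and Goldmakher, coupled with the sharpened Hal\'asz-type logarithmic mean inequality that the abstract advertises as an independent contribution. First, I would apply the classical P\'olya Fourier expansion
$$\sum_{n \leq t}\chi(n) = \frac{\tau(\chi)}{2\pi i}\sum_{1 \leq |n| \leq N}\frac{\bar\chi(n)}{n}\bigl(1-e(-nt/q)\bigr) + O\!\left(\frac{q\log q}{N}\right),$$
choose $N$ a small power of $q$ so that the error is absorbed, and thereby reduce the theorem to a uniform bound on the additively twisted logarithmic sums
$$S(\alpha) := \sum_{n \leq N}\frac{\bar\chi(n)\, e(n\alpha)}{n}, \qquad \alpha \in [0,1).$$

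Next, I would run the Hal\'asz-pretentious machinery on $S(\alpha)$. The size of $|S(\alpha)|$ should be controlled by the pretentious distance
$$\mb{D}(\chi,\psi\, n^{it}; y)^2 := \sum_{p \leq y}\frac{1 - \re(\chi\bar\psi(p)\, p^{-it})}{p}$$
with $\psi$ ranging over Dirichlet characters of small conductor and $t\in \mathbb{R}$; the additive twist $e(n\alpha)$ can be absorbed into this framework by the standard Granville-Soundararajan device of approximating it by twists $\psi(n)n^{it}$ on dyadic blocks. Since $\chi$ has odd order $g$, the value $\chi\bar\psi(p)$ is a $g$-th root of unity on almost all primes, and the best possible prime average of $\re(\chi\bar\psi(p)p^{-it})$ cannot exceed $(g/\pi)\sin(\pi/g)$, giving $\mb{D}(\chi,\psi\, n^{it};y)^2 \geq \delta_g\log\log y + O(1)$. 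The heart of the argument is then the new Hal\'asz-type inequality, which I expect to take the form
$$\left|\frac{1}{\log x}\sum_{n\leq x}\frac{f(n)}{n}\right| \ll e^{-M}(\log\log x)^{-1/4+\e} + \text{(small error)}$$
for completely multiplicative $f$ with $|f|\leq 1$, where $M$ is the minimum of $\mb{D}(f,\psi\, n^{it};x)^2$. This is a genuine refinement of the classical logarithmic Hal\'asz bound $\ll (1+M)e^{-M}$, saving essentially $M(\log\log x)^{1/4}$; substituting $M \geq \delta_g \log\log q$ yields $|S(\alpha)| \ll (\log q)^{1-\delta_g}(\log\log q)^{-1/4+\e}$ uniformly in $\alpha$, and combining with the Fourier reduction produces the theorem.

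The main obstacle is proving the new logarithmic Hal\'asz inequality with the precise $(\log\log x)^{-1/4+\e}$ exponent. I would analyze the Perron-type representation $\sum_{n\leq x}f(n)/n \approx (2\pi i)^{-1}\int L(1+s,f)\, x^s s^{-2}\, ds$ and reduce to a mean-square bound on $|L(1+ 1/\log x + i\tau,f)|$ over a short interval around the extremal frequency $\tau = t_0$. The fractional exponent $-1/4$ strongly suggests a Cauchy-Schwarz step trading pointwise control for an $L^2$ estimate, so that a square root of a second-moment bound on $|L|$ produces the desired $(\log\log x)^{-1/4}$ saving; compatibility with the $\delta_g$-bound from the pretentious framework will require choosing the relevant test functions so as not to lose the exponential factor $e^{-M}$. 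A secondary technical issue is verifying that the additive twist $e(n\alpha)$ from the P\'olya expansion does not erode the pretentious-distance lower bound uniformly in $\alpha$; this should be resolved by a Vaaler-type approximation of the sawtooth wave $1 - e(n\alpha)$ together with a careful covering of the parameter space of $\alpha$.
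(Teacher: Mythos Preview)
Your proposal misidentifies the mechanism that produces the extra $(\log\log q)^{-1/4+\e}$ saving. You conjecture a Hal\'asz-type inequality of the shape
\[
\left|\frac{1}{\log x}\sum_{n\leq x}\frac{f(n)}{n}\right| \ll e^{-M}(\log\log x)^{-1/4+\e},
\]
with the $(\log\log x)^{-1/4}$ built in and arising from a Cauchy--Schwarz/$L^2$ step. No such inequality is proved (or true in this generality). The paper's new Hal\'asz input (Theorem~\ref{LogarithmicMean}) is instead
\[
\sum_{n\leq x}\frac{f(n)}{n}\ll (\log x)\exp\bigl(-\mc{M}(f;x,T)\bigr)+\frac{1}{T}\qquad(0<T\leq 1),
\]
the point being that one may take $T$ \emph{small}, say $T=(\log x)^{-\alpha}$, rather than $T\geq 1$ as in the Montgomery--Vaughan/Goldmakher version.

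The saving of $(\log\log q)^{-1/4}$ comes from a completely different place: a refined lower bound for the distance when the Archimedean twist is restricted to $|t|\leq (\log y)^{-\alpha}$. For $\chi$ of odd order $g$ and $\psi$ odd primitive of conductor $m$ and order $k$, one has (Proposition~\ref{MINDIST})
\[
\mc{M}(\chi\bar\psi;y,(\log y)^{-\alpha})\ \geq\ \left(\delta_g+\frac{c_g\,\alpha}{(gk^{\ast})^2}\right)\log_2 y - \beta\e\log m + O(\log_2 m),
\]
with a genuine secondary term $\asymp (\log_2 y)/m^2$. This term is \emph{absent} when $T\geq 1$ (Proposition~\ref{AD}), which is why the small-$T$ Hal\'asz inequality is essential. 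Feeding this into the major-arc analysis (Theorem~\ref{COND}) gives
\[
M(\chi)\ll \frac{\sqrt{qm}}{\phi(m)}(\log q)^{1-\delta_g}\exp\!\left(-\frac{c}{m^2}\log_2 q + \beta\e\log m + O(\log_2 m)\right)+\sqrt{q}(\log q)^{9/11+o(1)},
\]
and the exponent $-1/4$ arises from \emph{optimizing over the conductor $m$ of the extremal pretender}: balancing $-\tfrac12\log m$ against $-(c/m^2)\log_2 q$ forces $m\asymp\sqrt{\log_2 q}$, and the value at the optimum is $-\tfrac14\log_3 q+O(\log_4 q)$. The $\e$-loss is the Siegel bound $L(1,\chi_m)\gg_\e m^{-\e}$ for a possible exceptional character, not anything in the Hal\'asz step. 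Your outline never isolates the conductor $m$ of the closest character, never produces the $m$-dependent secondary term in the distance, and therefore has no mechanism to generate the $-1/4$.
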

\noindent The occurrence of $\e$ in the exponent of $\log\log q$ in the upper bound is a consequence of the possible existence of Siegel zeros. In particular, if Siegel zeros do not exist then the $(\log\log q)^{\e}$ term can be replaced by $(\log _3 q)^{O(1)}.$

Assuming GRH, and using results of Granville and Soundararajan (see Theorem \ref{GrSoGRH} below), Goldmakher \cite{GOLD} also showed that the conditional bound in \eqref{GOLDSUP} is best possible. More precisely, for every $\varepsilon>0$ and odd integer $g\geq 3$, he proved the existence of an infinite family of primitive characters $\chi\bmod q$ of order $g$ such that 
\begin{equation}\label{GSDOWN}
M(\chi) \gg_{\varepsilon} \sqrt{q}(\log\log q)^{1-\delta_g -\varepsilon},
\end{equation}
 conditionally on the GRH. By modifying the argument of Granville and Soundararajan and using ideas of Paley \cite{Pa}, Goldmakher and Lamzouri \cite{GL1} proved this result unconditionally. 

It is natural to ask to what degree of precision we can determine the exact order of magnitude of the maximal values of $M(\chi)$ when $\chi$ has odd order $g\geq 3$; in particular, can we determine the optimal $(\log\log q)^{o(1)}$ contributions in the conditional part of \eqref{GOLDSUP}, and in \eqref{GSDOWN}. We make progress in this direction by showing that this term can be replaced by $(\log_3 q)^{-\frac{1}{4}}(\log _4q)^{O(1)}$ in both  \eqref{GOLDSUP} and \eqref{GSDOWN}. This allows us to conditionally determine the maximal values of $M(\chi)$, up to a power of $\log_4 q.$ 
\begin{thm}\label{MCHIUP2}
Assume GRH. Let $g \geq 3$ be a fixed odd integer. Then for any primitive Dirichlet character $\chi$ of order $g$ and conductor $q$ we have
\begin{equation}\label{PREC0}
M(\chi) \ll \sqrt{q} \left(\log_2 q\right)^{1-\delta_g}(\log_3 q)^{-\frac{1}{4}}(\log_4q)^{O(1)}.
\end{equation}
\end{thm}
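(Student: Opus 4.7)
The proof combines two main ingredients: (i) a GRH-conditional reduction of $M(\chi)$ to a short logarithmic partial sum of $\chi(n)/n^{1+it}$ due to Granville and Soundararajan (the theorem referenced as Theorem \ref{GrSoGRH} in the excerpt), and (ii) a new sharpened Hal\'asz-type inequality for logarithmic mean values of completely multiplicative functions valued in the $g$-th roots of unity, highlighted in the abstract as the main novel analytic input.

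For step (i), on GRH the framework of Granville--Soundararajan bounds $M(\chi)$ by $\sqrt{q}$ times a maximum of logarithmic partial sums of $\chi(n)/n^{1+it}$ at scale $y \asymp \log q$, uniformly for $|t| \leq T$ with $T$ a bounded power of $\log_2 q$. Matters thus reduce to bounding
$$S(\chi,t,y) := \frac{1}{\log y}\sum_{n\leq y}\frac{\chi(n)}{n^{1+it}}, \qquad y\asymp \log q,$$
uniformly in this range of $t$, with a factor of shape $(\log y)^{-\delta_g}(\log\log y)^{-1/4}(\log\log\log y)^{O(1)}$.

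For step (ii), the classical Hal\'asz inequality in logarithmic form gives $|S(\chi,t,y)|\ll (1+M)e^{-M}$, where
$$M=M(\chi,t,y)=\sum_{p\leq y}\frac{1-\re(\chi(p)p^{-it})}{p}$$
is the pretentious distance from $n^{it}$. For $\chi$ of odd order $g$, a direct computation (the one underlying Goldmakher's bound, governed by the integral $\int_0^{2\pi/g}(1-\cos\theta)d\theta/(2\pi/g)=\delta_g$) yields the lower bound $M\geq \delta_g \log\log y - O(1)$, so the classical form produces $|S(\chi,t,y)|\ll (\log y)^{-\delta_g}\log\log y$; with $y=\log q$ this misses \eqref{PREC0} by a full factor of $\log_3 q$. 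The plan is to sharpen the polynomial prefactor, establishing a bound of the form
$$|S(\chi,t,y)|\ll \frac{e^{-M}}{(1+M)^{1/4}}\bigl(\log(2+M)\bigr)^{O(1)},$$
so that inserting $M\asymp \delta_g \log_3 q$ yields exactly the savings $(\log_3 q)^{-1/4}(\log_4 q)^{O(1)}$ demanded in \eqref{PREC0}.

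The principal obstacle is the refined Hal\'asz inequality itself: pointwise in $t$ the factor $(1+M)e^{-M}$ is essentially sharp, so the $(1+M)^{-1/4}$ saving must come from exploiting logarithmic averaging together with extra structural information about completely multiplicative functions. I would prove it through the Hal\'asz--Montgomery mean-value machinery applied to the Mellin transform of $F(s)=\sum_{n\leq y}\chi(n)n^{-s}$, combining a Parseval-type integral $\int_{-T}^{T}|F(1+it')|^2\,dt'$ with a level-set decomposition of the large-values set of $F$ on the line $\re(s)=1$; the logarithmic weight $1/n$ provides additional $L^2$ smoothing that one leverages through a Cauchy--Schwarz step tuned to produce the exponent $1/4$. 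Controlling uniformity in $t\in[-T,T]$ and passing the refined bound back through the Granville--Soundararajan reduction are then expected to be routine once the inequality is in hand.
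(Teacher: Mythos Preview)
Your proposal has a genuine gap: both the reduction step and the analytic mechanism you identify for the $(\log_3 q)^{-1/4}$ saving are incorrect.

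First, Theorem~\ref{GrSoGRH} is a \emph{lower} bound for $M(\chi)$, not an upper bound, so it cannot serve as the reduction in step~(i). The actual GRH reduction proceeds via P\'olya's Fourier expansion, the friable approximation \eqref{APPROXFRIABLE}, and a major/minor arc dissection. On a major arc with denominator $r$, the sum does not collapse to $\sum_{n\leq y}\chi(n)n^{-1-it}$; it becomes a sum over characters $\psi\bmod r$ of the logarithmic means $\sum_{n\leq N}\chi(n)\bar\psi(n)/n$, each weighted by a Gauss sum of size $\sqrt{m}$ where $m$ is the conductor of $\psi$. The resulting structural bound is Theorem~\ref{COND}: there is a distinguished odd primitive $\xi\bmod m$ with $m\leq(\log Q)^{4/11}$ such that
\[
M(\chi)\ll \frac{\sqrt{qm}}{\phi(m)}(\log Q)\exp\Big(-\mc{M}\big(\chi\bar\xi;Q,(\log Q)^{-7/11}\big)\Big)+\sqrt{q}(\log Q)^{9/11+o(1)}.
\]
The factor $\sqrt{m}/\phi(m)\asymp m^{-1/2}$ is essential and has no analogue in your formulation.

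Second, the $(\log_3 q)^{-1/4}$ saving does \emph{not} come from any sharpening of the $(1+M)e^{-M}$ or $e^{-M}$ prefactor in Hal\'asz's inequality; the logarithmic-mean bound $(\log y)e^{-M}$ is already essentially sharp, and your hoped-for inequality $|S|\ll e^{-M}(1+M)^{-1/4}$ is false in general. The saving instead comes from a tension in the parameter $m$. Proposition~\ref{MINDIST} shows that when the twist range $T$ is taken \emph{small}, $T=(\log Q)^{-\alpha}$, there is an extra repulsion term:
\[
\mc{M}\big(\chi\bar\xi;Q,(\log Q)^{-\alpha}\big)\geq \Big(\delta_g+\frac{c}{m^2}\Big)\log_2 Q+O(\log_2 m),
\]
so the bound for $M(\chi)$ becomes roughly $\sqrt{q}(\log Q)^{1-\delta_g}\exp\big(-\tfrac12\log m - c'(\log_2 Q)/m^2\big)$. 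Optimizing over $m$ gives $m\asymp\sqrt{\log_2 Q}$ and the saving $\exp(-\tfrac14\log_3 Q+O(\log_4 Q))$. The ``new Hal\'asz-type inequality'' advertised in the abstract is Theorem~\ref{LogarithmicMean}, whose novelty is precisely that it allows $T\leq 1$ (with error $1/T$ rather than $1/\sqrt{T}$), enabling the choice $T=(\log Q)^{-7/11}$ needed to access the secondary term in Proposition~\ref{MINDIST}; it is not a refinement of the exponent in $e^{-M}$.
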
 

\begin{thm} \label{MCHILOW}
Let $g\geq 3$ be a fixed odd integer. There are arbitrarily large $q$ and primitive Dirichlet characters $\chi$ modulo $q$ of order $g$ such that
\begin{equation} \label{PREC1}
M(\chi)  \gg \sqrt{q} \left(\log_2 q\right)^{1-\delta_g}  \left(\log_3 q\right)^{-\frac{1}{4}}\left(\log_4 q\right)^{O(1)}.
\end{equation}
\end{thm}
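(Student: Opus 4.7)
The plan is to mirror, on the lower bound side, the sharper upper bound of Theorem~\ref{MCHIUP2} by explicitly constructing primitive odd-order characters whose partial sums essentially saturate that bound. Following the template of \cite{GrSo2,GOLD,GL1}, the construction reduces to an extremal question for partial sums of completely multiplicative functions valued in $\mu_g:=\{z\in\mathbb{C}:z^g=1\}$. Specifically, by an unconditional variant of the Granville--Soundararajan construction developed in \cite{GL1}, for any completely multiplicative $f:\mathbb{N}\to\mu_g$ and any large $y$ one can exhibit arbitrarily large $q$ and primitive characters $\chi$ of order $g$ modulo $q$ whose values on primes $p\leq y$ agree with $f(p)$, and which satisfy
$$
M(\chi)\gg \sqrt{q}\,|S_f(y)|, \qquad S_f(y):=\sum_{n\leq y}\frac{f(n)}{n},
$$
where $y$ may be taken as a constant times $\log q$. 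Hence it suffices to construct, for each large $y$, a $\mu_g$-valued completely multiplicative function $f$ with
$$
|S_f(y)|\gg (\log y)^{1-\delta_g}(\log\log y)^{-1/4}(\log\log\log y)^{O(1)}.
$$

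To construct $f$, I would use a resonance. Pick a real parameter $t_0\in\mathbb{R}$ (to be optimised via Kronecker--Weyl so that the phases $(t_0\log p)\bmod 2\pi/g$ are suitably equidistributed for primes $p\leq y$), and define $f(p)\in\mu_g$ to be the $g$-th root of unity closest to $p^{it_0}$, so that $f(p)p^{-it_0}=e^{i\alpha_p}$ with $|\alpha_p|\leq\pi/g$. Heuristically, $f$ pretends to be $n^{it_0}$, and the Dirichlet series $L(s,f)=\prod_p(1-f(p)p^{-s})^{-1}$ develops a pronounced peak at $s=1+it_0$:
$$
|L(1+it_0,f)|\asymp \exp\Bigl(\sum_{p\leq y}\frac{\cos\alpha_p}{p}\Bigr)\asymp (\log y)^{(g/\pi)\sin(\pi/g)}=(\log y)^{1-\delta_g},
$$
since $\cos\alpha_p$ averages to $(g/\pi)\sin(\pi/g)$ for equidistributed $\alpha_p\in[-\pi/g,\pi/g]$. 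I would then connect this peak to $S_f(y)$ through the Mellin--Perron identity
$$
S_f(y) = \frac{1}{2\pi i}\int_{(c)} L(1+w,f)\,\frac{y^w}{w}\,dw + \text{error},
$$
and evaluate by a saddle-point analysis around $w=it_0$. The width of the saddle, governed by the curvature $-\partial_t^2\log|L(1+it,f)|\big|_{t_0}$ together with the oscillatory $y^w$ and the $1/w$ factor, should furnish the residual correction $(\log\log y)^{-1/4}(\log\log\log y)^{O(1)}$; substituting $y\asymp\log q$ then yields Theorem~\ref{MCHILOW}.

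The hard part will be making this saddle-point argument sharp enough to extract the precise $(\log\log y)^{-1/4}$ exponent. Three technical subtasks stand out: (i) a quantitative Kronecker--Weyl or Diophantine approximation estimate to choose $t_0$ so that $\{(t_0\log p)\bmod 2\pi/g:p\leq y\}$ is equidistributed in $[0,2\pi/g)$ with sufficient precision to pin down the peak value of $|L(1+it_0,f)|$ to within a factor of $(\log\log\log y)^{O(1)}$; (ii) a careful Taylor expansion of $\log L(1+w,f)$ about $w=it_0$, with uniform control of higher-order derivatives in $y$; and (iii) bounding the Perron integral away from a small neighbourhood of $it_0$, in particular ruling out competing local maxima of $|L(1+it,f)|$ that could dominate the saddle contribution. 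These ingredients should parallel, in reverse, the refined Halász-type estimates underlying the upper bounds in Theorems~\ref{MCHIUP1}--\ref{MCHIUP2}.
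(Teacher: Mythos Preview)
Your reduction step is where the argument breaks down. You assert that \cite{GL1} supplies, for any $\mu_g$-valued $f$, a character $\chi$ of order $g$ with $M(\chi)\gg\sqrt{q}\,|S_f(y)|$ where $S_f(y)=\sum_{n\le y}f(n)/n$. No such statement is available. Characters of odd order $g$ are \emph{even}, so $\sum_{1\le|n|\le N}\chi(n)/n=0$ identically, and the P\'olya expansion forces one to work instead with the twisted sums $\sum_{|n|\le N}\chi(n)n^{-1}e(n\theta)$. In \cite{GL1} (and in the present paper's Proposition~\ref{CharSumL1}) this is handled by pairing $\chi$ with an \emph{odd} primitive character $\psi$ of small conductor $m$, which yields
\[
M(\chi)+\sqrt{q}\ \gg\ \frac{\sqrt{qm}}{\phi(m)}\,|L(1,\chi\bar\psi)|\ \asymp\ \frac{\sqrt{qm}}{\phi(m)}\,(\log_2 q)\exp\big(-\mb{D}(\chi,\psi;\log q)^2\big),
\]
the last step requiring either GRH or the zero-density input of Theorem~\ref{GL} (the weaker bound from \cite{GL1} loses an extra $\log_3 q$, which the paper explicitly notes is fatal for the precise exponent). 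Your resonance function $f(p)=$ nearest $g$-th root of $p^{it_0}$ never introduces such a $\psi$, so it cannot feed into the only known mechanism for bounding $M(\chi)$ from below when $\chi$ is even.

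Even more fundamentally, the $(\log_3 q)^{-1/4}$ does not arise from any saddle-point width. In the paper's proof it comes from an arithmetic trade-off: by Proposition~\ref{UPPER2} the minimal distance satisfies $\mb{D}(\chi,\psi;y)^2\ge\big(\delta_g+c(gk^\ast)^{-2}\big)\log_2 y+O(\log_2 m)$, so one wants $k^\ast$ (hence $m$) large to drive the secondary term down, while the prefactor $\sqrt{m}/\phi(m)\asymp m^{-1/2}$ penalises large $m$. Balancing $\exp(-c\,m^{-2}\log_3 q)$ against $m^{-1/2}$ gives the optimum at $m\asymp(\log_3 q)^{1/2}$ and produces exactly the $(\log_3 q)^{-1/4}$ factor; Proposition~\ref{UPPER} together with Lemma~\ref{VEC} then supplies $\gg\sqrt{N}$ characters realising this optimum, enough to avoid the exceptional set in Theorem~\ref{GL}. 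Your proposed Mellin--Perron saddle analysis of $S_f(y)$ neither reflects this balance nor addresses the parity obstruction, so as written it cannot deliver the theorem.
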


To obtain Theorem \ref{MCHILOW}, our argument relates $M(\chi)$ to the values of certain associated Dirichlet $L$-functions at $1$, and uses zero-density results and ideas from \cite{LAM} to construct characters $\chi$ for which these values are large. We shall discuss the different ingredients in the proofs of Theorems \ref{MCHIUP1}, \ref{MCHIUP2}  and \ref{MCHILOW} in detail in the next section. 
 
Recent progress on character sums was made possible by Granville and Soundararajan's discovery of a hidden structure among the characters $\chi$ having large $M(\chi)$. In particular, they show that $M(\chi)$ is large only when $\chi$ \emph{pretends} to be a character of small conductor and opposite parity. To define this notion of \emph{pretentiousness}, we need some notation.  Here and throughout we denote by $\mathcal{F}$ the class of completely multiplicative functions $f$ such that $|f(n)|\leq 1$ for all $n$. For $f, g\in \mc{F}$ we define
\begin{equation*}
\mb{D}(f,g;y) := \left(\sum_{p \leq y} \frac{1-\text{Re}(f(p)\bar{g(p)})}{p}\right)^{\frac{1}{2}},
\end{equation*}
which turns out to be a pseudo-metric on $\mc{F}$ (see \cite{GrSo2}). 
We say that $f$ \emph{pretends} to be $g$ (up to $y$) if there is a constant $0\leq \delta<1$ such that $\mb{D}(f,g;y)^2\leq \delta \log\log  y$. 

One of the key ingredients in the proof of \eqref{GSUP} is the following bound for logarithmic mean values of functions $f\in \mc{F}$ in terms of $\mb{D}(f,1;x)$ (see Lemma 4.3 of \cite{GrSo2}) 
\begin{equation}\label{BOUNDLOGDISTANCE}
\sum_{n\leq x} \frac{f(n)}{n}\ll (\log x) \exp\left(-\frac{1}{2}\mb{D}(f,1;x)^2\right).
\end{equation}
Note that the factor $1/2$ inside the exponential on the right hand side of \eqref{BOUNDLOGDISTANCE} is responsible for the weaker exponent $\delta_g/2$ in \eqref{GSUP}.

Goldmakher \cite{GOLD} realized that one can obtain the optimal exponent $\delta_g$ in \eqref{GOLDSUP} by replacing \eqref{BOUNDLOGDISTANCE} by a Hal\'{a}sz-type inequality for logarithmic mean values of multiplicative functions due to Montgomery and Vaughan \cite{MV}. Combining Theorem 2 of \cite{MV} with refinements of Tenenbaum (see Chapter III.4 of \cite{Te}) he deduced that (see Theorem 2.4 in \cite{GOLD})
\begin{equation} \label{HMTFIRST}
 \sum_{n \leq x} \frac{f(n)}{n} \ll (\log x)\exp\big(-\mc{M}(f;x,T)\big) + \frac{1}{\sqrt{T}},
\end{equation}
for all $f\in \mc{F}$ and $T\geq 1$, where 
$$
\mc{M}(f;x,T) := \min_{|t| \leq T} \mb{D}(f,n^{it};x)^2.
$$

Motivated by our investigation of character sums, we are interested in characterizing the functions $f\in \mc{F}$ that have a \emph{large} 
logarithmic mean, in the sense that
\begin{equation}\label{LARGELOG}
\sum_{n \leq x} \frac{f(n)}{n}\gg (\log x)^{\alpha},
\end{equation}
for some $0<\alpha\leq 1$.
Taking $T=1$ in \eqref{HMTFIRST} shows that this happens only when $f$ pretends to be $n^{it}$ for some $|t|\leq 1$. However, observe that
 $$\sum_{n \leq x} \frac{n^{it}}{n} = \frac{x^{it}-2^{it}}{it} +O(1)\asymp \min\left(\frac{1}{|t|}, \log x\right),$$
and hence $f(n)=n^{it}$ satisfies \eqref{LARGELOG} only when $|t|\ll (\log x)^{-\alpha}$. By refining the ideas of Montgomery and Vaughan \cite{MV} and Tenenbaum \cite{Te}, we prove the following result, which shows that this is essentially the only case.  
\begin{thm}\label{LogarithmicMean}
Let $f\in \mathcal{F}$ and $x\geq 2$. Then, for any real number $0< T \leq 1$ we have
$$\sum_{n\leq x}\frac{f(n)}{n}\ll  (\log x) \exp\big(-\mc{M}(f;x, T)\big)+\frac{1}{T},
$$
where the implicit constant is absolute.
\end{thm}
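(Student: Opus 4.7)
The plan is to refine the Montgomery--Vaughan--Tenenbaum proof of \eqref{HMTFIRST}, tracking the $T$-dependence carefully in the regime $T \leq 1$. Setting $\alpha := 1/\log x$ and $F(s) := \sum_n f(n)/n^s$, I begin from a (smoothed) Perron-type formula of the shape
$$
\sum_{n \leq x}\frac{f(n)}{n} = \frac{1}{2\pi}\int_{|t| \leq T_1} F(1+\alpha+it)\,\frac{x^{\alpha+it}}{\alpha+it}\,dt + O\!\left(\frac{\log x}{T_1}\right),
$$
where $T_1 \geq 1$ is an auxiliary truncation height to be optimized at the end.

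For the contribution of $|t| \leq T$, I apply the Hal\'asz-type bound $|F(1+\alpha+it)| \ll (\log x)\exp\bigl(-\mb{D}(f,n^{it};x)^2\bigr)$, replace the exponential by its smallest value $\exp(-\mc{M}(f;x,T))$, and integrate against the kernel $1/|\alpha+it|$. Because a crude estimate would lose a factor of $\log(T/\alpha) = \log(T \log x)$, I need the sharper form of Hal\'asz's inequality (as refined by Tenenbaum in Chapter III.4 of \cite{Te}) which supplies an extra decaying factor of $1/|\alpha+i(t-t_0)|$ around the minimizer $t_0 \in [-T,T]$. This confines the mass of the integrand to a neighbourhood of size $\sim \alpha$ around $t_0$ and yields a main-term bound of the desired shape $\ll (\log x)\exp(-\mc{M}(f;x,T))$.

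For the tail $T < |t| \leq T_1$, the goal is to extract an error of size $O(1/T)$ rather than the $O(1/\sqrt{T})$ recovered in \eqref{HMTFIRST}. Here I would apply Cauchy--Schwarz in the form
$$
\left| \int_{T < |t| \leq T_1} F(1+\alpha+it)\,\frac{x^{\alpha+it}}{\alpha+it}\,dt \right|^2 \leq \int_{|t|\leq T_1} |F(1+\alpha+it)|^2\,dt \cdot \int_{|t|>T} \frac{dt}{|\alpha+it|^2},
$$
and bound the first factor by a Montgomery--Vaughan mean-value estimate of size $\ll T_1 \log x$, while the second factor is $\ll 1/T$ (using $T \geq \alpha$). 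Balancing this against the Perron remainder $O((\log x)/T_1)$ and using refined pointwise Hal\'asz bounds on $|F|$ away from the peak at $t_0$, one recovers the improved $O(1/T)$ error. The restriction $T \leq 1$ is used essentially: it forces the kernel $1/|\alpha+it|$ to vary substantially over the tail, so that its $L^2$ concentration near $t=0$ provides the full $1/T$ gain instead of just $1/\sqrt{T}$.

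The main obstacle is to make the second-moment estimate for $F(1+\alpha+it)$ sufficiently sharp together with the pointwise Hal\'asz bound refined by a $1/|\alpha+i(t-t_0)|$ factor; both must be uniform in $T \leq 1$ and avoid any logarithmic losses that would wipe out the improvement over \eqref{HMTFIRST}. The completely multiplicative and unimodular structure of $f \in \mc{F}$ is what ultimately enables these refined estimates, via an adaptation of Tenenbaum's large-sieve-type arguments to the narrow range $|t| \leq T \leq 1$.
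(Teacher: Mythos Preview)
Your tail estimate has a genuine gap. The Cauchy--Schwarz step as written gives at best
\[
\left|\int_{T<|t|\le T_1}\right| \;\ll\; \sqrt{\big(T_1+\log x\big)/T},
\]
using the mean-value bound $\int_{|t|\le T_1}|F(1+\alpha+it)|^2\,dt\ll T_1+1/\alpha$. Balancing this against the Perron error $(\log x)/T_1$ yields an error of order $(\log x)^{2/3}T^{-1/3}$, which exceeds $1/T$ throughout the whole nontrivial range $1/\log x<T\le 1$. Your fallback, ``refined pointwise Hal\'asz bounds on $|F|$ away from the peak,'' does not close this: away from $t_0$ one only has $|F(1+\alpha+it)|\ll 1/\alpha$ pointwise (or a Hal\'asz bound with no quantitative decay in $|t-t_0|$ strong enough to beat the $\log(T_1/T)$ coming from $\int dt/|t|$). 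The $1/T$ you want simply does not emerge from a direct Perron integral split at height $T$ with these tools.

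The paper takes a different route. It modifies the Montgomery--Vaughan machinery itself for small $T$: one partitions the line $\mathrm{Re}(s)=\alpha$ into boxes of \emph{width $T$} (rather than width $1$) and proves
\[
\sum_{n\le x}\frac{f(n)}{n}\;\ll\;\frac{1}{\log x}\int_{1/\log x}^{1}\frac{H_T(\alpha)}{\alpha}\,d\alpha,
\qquad
H_T(\alpha)^2:=\sum_{k\in\mathbb{Z}}\max_{s\in\mathcal{A}_{k,T}(\alpha)}\left|\frac{F(1+s)}{s}\right|^2,
\]
the key lemma being that $\int|F'/(\alpha+it)|^2\,dt+\int|F/(\alpha+it)^2|^2\,dt\ll H_T(\alpha)^2/\alpha$ (via Montgomery's inequality applied on each box). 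The $k=0$ box, for $\alpha\le T$, is controlled by the Hal\'asz-type bound $\max_{|t|\le T}|F(1+\alpha+it)|\ll(\log x)e^{-\mathcal{M}(f;x,T)}$, valid precisely because $T\ge\alpha$. For $|k|\ge 1$ one inserts only the trivial $|F|\ll 1/\alpha$, and it is the $\ell^2$ sum $\sum_{|k|\ge 1}(kT)^{-2}\ll T^{-2}$ over boxes---not a Cauchy--Schwarz over the tail---that delivers the $1/T$ after integrating in $\alpha$. This $\ell^2$-over-boxes structure is the missing idea in your outline.
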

Taking $T=c(\log x)^{-\alpha}$ in this result (where $c>0$ is a suitably small constant), we deduce that if $f\in \mc{F}$ satisfies \eqref{LARGELOG}, then $f$ pretends to be $n^{it}$ for some $|t|\ll (\log x)^{-\alpha}$. Theorem \ref{LogarithmicMean} will be one of the key ingredients in obtaining our superior bounds for $M(\chi)$ in Theorems \ref{MCHIUP1} and \ref{MCHIUP2}.

\section{Detailed statement of results} 
To explain the key ideas in the proofs of Theorems \ref{MCHIUP1}, \ref{MCHIUP2} and \ref{MCHILOW}, we shall first sketch the argument of Granville and Soundararajan \cite{GrSo2}. Their starting point is P\'olya's Fourier expansion (see section 9.4 of \cite{MVbook}) for the character sum
$\sum_{n\leq t}\chi(n)$, which reads 
\begin{equation}\label{Polya}
\sum_{n\leq t}\chi(n)
=\frac{\tau(\chi)}{2\pi i}
	\sum_{1\leq |n|\leq N} \frac{\overline{\chi}(n)}{n}
		\left(1-e\left(-\frac{nt}{q}\right)\right)
		+O\left(1+\frac{q\log q}{N}\right),
\end{equation}
where $\chi$ is a primitive character modulo $q$,  $e(x) := e^{2\pi i x}$ and $\tau(\chi)$ is the Gauss sum 
$$
 \tau(\chi) := \sum_{n=1}^q \chi(n) e\Big(\frac{n}{q}\Big).
$$
Note that $|\tau(\chi)| = \sqrt{q}$ whenever $\chi$ is primitive.

Thus, in order to estimate $M(\chi)$, one needs to understand the size of the exponential sum
\begin{equation}\label{EXPCHARA}
\sum_{1\leq |n|\leq q} \frac{\chi(n)}{n}e(n\theta),
\end{equation}
for $\theta\in [0, 1]$. Montgomery and Vaughan \cite{MV2} showed that this sum is small if $\theta$ belongs to a \emph{minor arc}, i.e., $\theta$ can only be well-approximated by rationals with large denominators (compared to $q$). This leaves the more difficult case of $\theta$ lying in a \emph{major arc}. In this case, $\theta$ can be well-approximated by some rational $b/r$ with suitably small $r$ (compared to $q$). Granville and Soundararajan showed that in this case there is some large $N$ (depending on $\theta$, $b$, $r$ and $q$) such that we can approximate the sum 
\eqref{EXPCHARA} by 
\begin{align*}
&\sum_{1\leq |n| \leq N} \frac{\chi(n)}{n}e(bn/r) = \sum_{a\bmod r} e(ab/r) \sum_{1\leq |n| \leq N \atop n \equiv a \bmod r} \frac{\chi(n)}{n} \\
&=\frac{1}{\phi(r)} \sum_{\psi \bmod r} \left(\sum_{a\bmod r}\bar{\psi}(a)e(ab/r)\right) \sum_{1\leq |n| \leq N} \frac{\chi(n)\bar{\psi}(n)}{n}.
\end{align*}
The bracketed term, a Gauss sum, is well understood; in particular it has norm $\leq \sqrt{r^{\ast}}$, where $r^{\ast}$ is the conductor of $\psi$ (see e.g., Theorem 9.7 of \cite{MVbook}).   
Thus, what remains to be determined in order to bound $M(\chi)$, is an upper bound for the sum
\begin{equation}\label{LOGMEANPN}
\sum_{1\leq |n| \leq N} \frac{\chi(n)\bar{\psi}(n)}{n}
\end{equation}
for each character $\psi$ modulo $r$. Furthermore, observe that if $\chi$ and $\psi$ have the same parity then this sum is exactly $0$; hence, we only need to consider the case when $\chi$ and $\psi$ have opposite parities.

Granville and Soundararajan's breakthrough stems from their discovery of a ``repulsion'' phenomenon between characters $\chi$ of odd order (which are necessarily of even parity), and characters $\psi$ of odd parity and small conductor. A consequence of this phenomenon is that the sum \eqref{LOGMEANPN} is small, allowing them to improve the P\'olya-Vinogradov inequality in this case. 
More specifically, they show that if $\chi$ is a primitive character of odd order $g\geq 3$ and $\psi$ is an odd primitive character of conductor $m\leq (\log y)^A$ then  
 \begin{equation}\label{LOWERBOUDISTANCE}
 \mb{D}(\chi,\psi;y)^2\geq (\delta_g+o(1))\log\log y
 \end{equation}
(see Lemma 3.2 of \cite{GrSo2}). Inserting this bound in \eqref{BOUNDLOGDISTANCE} allows them to bound the sum \eqref{LOGMEANPN}, from which they deduce the unconditional case of \eqref{GSUP}. The proof of the conditional part of \eqref{GSUP} (when $Q=\log q$) proceeds along the same lines, but uses an additional ingredient, namely the following approximation for the sum \eqref{EXPCHARA} (see Proposition 2.3 and Lemma 5.2 of \cite{GrSo2}) conditional on GRH:
\begin{equation}\label{APPROXFRIABLE}
\sum_{n\leq q}\frac{\chi(n)}{n} e(n\theta)= \sum_{\substack{n\leq q \\ n \in \mc{S}(y)}}\frac{\chi(n)}{n} e(n\theta)+O\left(y^{-1/6}(\log q)^2\right).
\end{equation}
Here, $\mc{S}(y)$ is the set of $y$-\emph{friable} integers (also known as $y$-\emph{smooth} integers), i.e., the set of positive integers $n$ whose prime factors are all less than or equal to $y$.

In \cite{GOLD}, Goldmakher showed that the bound \eqref{LOWERBOUDISTANCE} is best possible. Furthermore, in order to obtain the exponent $\delta_g$ in \eqref{GOLDSUP}, he used the inequality \eqref{HMTFIRST} to bound the sum \eqref{LOGMEANPN} in terms of $\mc{M}(\chi\bar{\psi};y, T)$. However, to ensure that this argument works, one needs to show that the lower bound \eqref{LOWERBOUDISTANCE} still persists if we twist $\chi\bar{\psi}$ by Archimedean characters $n^{it}$ for $|t|\leq T$. By a careful analysis of 
$\mc{M}(\chi\bar{\psi};y,T)$, Goldmakher (see Theorem 2.10 of \cite{GOLD}) proved that (under the same assumptions as \eqref{LOWERBOUDISTANCE})
\begin{equation}\label{LBDGOLD}
\mc{M}(\chi\bar{\psi};y,(\log y)^2)\geq (\delta_g+o(1))\log\log y.
\end{equation}
Thus, by combining this bound with \eqref{HMTFIRST} and following closely the argument in \cite{GrSo2}, he was able to obtain \eqref{GOLDSUP}. 

In order to improve these results and establish Theorems \ref{MCHIUP1} and \ref{MCHIUP2}, the first step is to obtain more precise estimates for the quantity $\mc{M}(\chi\bar{\psi};y, T)$. We discover that there is a substantial difference between the sizes of $\mc{M}(\chi\bar{\psi}; y, T_1)$ and $\mc{M}(\chi\bar{\psi}; y, T_2)$ if $T_1$ is small and $T_2$ is large (a result that may be surprising in view of \eqref{LOWERBOUDISTANCE} and \eqref{LBDGOLD}). In fact, we prove that there is a large secondary term of size $(\log_2y)/k^2$ (where $k$ is the order of $\psi$) that appears in the estimate of $\mc{M}(\chi\bar{\psi}; y, T)$ when $T\leq (\log y)^{-c}$ (for some constant $c>0$), but disappears when $T\geq 1$. 
\begin{pro}\label{MINDIST}
Let $g\geq 3$ be a fixed odd integer, $\alpha\in (0, 1)$, and $\e>0$ be small. Let $\chi$ be a primitive character of order $g$ and conductor $q$. Let $\psi$ be an odd primitive character modulo $m$, with $m \leq (\log y)^{4\alpha/7}$. Put $k^{\ast} := k/(k,g)$. Then we have\begin{equation} \label{LowerBoundDistance2}
\mc{M}(\chi\bar{\psi}; y, (\log y)^{-\alpha}) \geq \left(\delta_g + \frac{\alpha \pi^2(1-\delta_g)}{4(gk^{\ast})^2}\right) \log_2 y - \beta \e \log m+O_{\alpha}\left(\log_2 m\right),
\end{equation}
where $\beta = 1$ if $m$ is an exceptional modulus and $\beta = 0$ otherwise.
\end{pro}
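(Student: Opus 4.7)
The plan is to extend Goldmakher's proof of \eqref{LBDGOLD} (Theorem 2.10 of \cite{GOLD}) by extracting an additional quantitative gain from the much smaller range of $t$ allowed here. Let $t_0 \in [-T, T]$ with $T = (\log y)^{-\alpha}$ attain the minimum, so that
\begin{equation*}
\mc{M}(\chi\bar\psi; y, T) = \sum_{p\leq y}\frac{1 - \cos(\theta_p - t_0\log p)}{p},
\end{equation*}
where $\theta_p := \arg(\chi(p)\bar\psi(p))$ takes values in $\tfrac{2\pi}{gk^\ast}\mb{Z}$, since $\chi\bar\psi$ has order dividing $N := gk^\ast = \mathrm{lcm}(g, k)$.

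First, I would recover the baseline lower bound $\delta_g \log_2 y$ by running the Granville--Soundararajan--Goldmakher repulsion analysis: partition primes by the value $\zeta = \chi\bar\psi(p) \in \mu_N$, apply Siegel--Walfisz for the characters $\chi^a\psi^b$ modulo $\mathrm{lcm}(q,m)$ to extract densities $c_\zeta$, and invoke the trigonometric inequality specific to odd-order characters (essentially the observation that $\chi(p) \neq -1$ when $g$ is odd). A potential Siegel zero for an $L$-function of conductor $m$ is the source of the $\beta\e \log m$ term, while the $O_\alpha(\log_2 m)$ absorbs the Siegel--Walfisz error, which demands $m \leq (\log y)^{4\alpha/7}$ to be uniform.

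The crucial new input is the extraction of the bonus $\tfrac{\alpha\pi^2(1-\delta_g)}{4(gk^\ast)^2}\log_2 y$, which is available only when $T$ is small. The key observation is that on the range $p \leq z := \exp((\log y)^{4\alpha/7})$ one has $|t_0\log p| \leq (\log y)^{-3\alpha/7} = o(1)$, so the twist acts only to second order on this range and the untwisted Granville--Soundararajan extremal density profile is effectively in force. By the trigonometric inequality $1 - \cos\vartheta \geq \vartheta^2/4$ valid for $|\vartheta| \leq \pi$, each prime $p \leq z$ with $\zeta \neq 1$ produces a surplus of at least $\tfrac{1}{4}(2\pi/N)^2 = \pi^2/N^2$ beyond what the baseline repulsion captures, and in the extremal profile a fraction $1-\delta_g$ of the logarithmic mass $\sum_{p\leq z}1/p = \tfrac{4\alpha}{7}\log_2 y + O(1)$ lies in such nontrivial classes. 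Assembling the surplus and re-optimizing over the threshold produces the desired coefficient $\tfrac{\alpha\pi^2(1-\delta_g)}{4N^2}$ in front of $\log_2 y$.

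The main obstacle will be to verify rigorously that the Granville--Soundararajan extremal density profile persists under the twist $p^{-it_0}$, and that the large-prime range $z < p \leq y$ does not erode the bonus through destructive interference in the oscillating sum $\sum_{z<p\leq y}\cos(\theta_p-t_0\log p)/p$. I would handle this by a variational analysis in $t_0$, combining partial summation applied to $\sum_{p\leq y}(\log p)\sin(t\log p)/p$ with Siegel--Walfisz uniformly over the relevant moduli, in order to show that at the optimal $t_0$ the small-prime density profile is indeed essentially GS-extremal, while the large-prime sum contributes non-negatively up to an $O_\alpha(\log_2 m)$ error.
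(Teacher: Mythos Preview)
Your plan has a genuine gap at its very first step. You cannot ``extract densities $c_\zeta$'' for the level sets $\{p : \chi\bar\psi(p) = \zeta\}$ via Siegel--Walfisz for characters $\chi^a\psi^b$ modulo $\mathrm{lcm}(q,m)$: the conductor of $\chi^a$ is (a divisor of) $q$, which is enormous, so no prime number theorem in arithmetic progressions is available at that modulus. This is not how the Granville--Soundararajan--Goldmakher argument works. The correct mechanism never tracks the value of $\chi(p)$; instead one bounds pointwise
\[
\text{Re}\big(\chi(p)\bar\psi(p)p^{-it}\big) \leq \max_{z\in\mu_g\cup\{0\}} \text{Re}\big(z\,e(\theta-\ell/k)\big)
\]
whenever $\psi(p)=e(\ell/k)$ and $p^{-it}\approx e(\theta)$, and then uses Siegel--Walfisz only modulo the small number $m$ to count primes in each $\psi$-class. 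All information about $\chi$ disappears except that its values lie in $\mu_g\cup\{0\}$.

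Your bonus heuristic is also off-target. The extra $\frac{\alpha\pi^2(1-\delta_g)}{4(gk^\ast)^2}\log_2 y$ does not arise from a per-prime surplus ``$1-\cos\vartheta\geq \vartheta^2/4$'' on nontrivial classes (and that inequality fails near $\vartheta=\pi$ anyway). The paper splits at $z=\exp((\log y)^{\alpha})$, not $\exp((\log y)^{4\alpha/7})$; since $|t|\leq 1/\log z$, the twist is negligible on $p\leq z$ after summing, and one applies the \emph{untwisted} refined repulsion (Proposition~\ref{UPPER2}) there. That proposition gives the precise constant
\[
1-(1-\delta_g)\frac{\pi/(gk^\ast)}{\tan(\pi/(gk^\ast))},
\]
and the bonus is extracted via $u/\tan u \leq 1-u^2/4$. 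On the remaining range $z<p\leq y$ one proves separately (Proposition~\ref{MEDIUMPRIMES}) that $\sum_{z<p\leq y}\frac{1-\text{Re}(\chi\bar\psi(p)p^{-it})}{p}\geq \delta_g\log(\log y/\log z)+O(1)$ by a Goldmakher-type partition into short multiplicative intervals and an integral estimate for the periodic function $F_{gk^\ast}$. Your claim that the large-prime sum need only contribute ``non-negatively'' would lose $\delta_g(1-\alpha)\log_2 y$ and yield a bound far weaker than the one stated.
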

\begin{pro} \label{AD}
Assume GRH. Let $g\geq 3$ be a fixed odd integer. Let $N$ be large, and $y\leq (\log N)/10$. Let $\psi$ be an odd primitive character of conductor $m$ such that  $\exp\left(2\sqrt{\log_3 y}\right) \leq m\leq \exp\left(\sqrt{\log y}\right)$. Then, there exist at least $\sqrt{N}$  primitive characters $\chi$ of order $g$ and conductor $q\leq N$, such that for all $T\geq 1$ we have
\begin{equation*}
\mc{M}(\chi\bar{\psi}; y, T) \leq \delta_g \log_2 y + O\left(\log_2 m\right).
\end{equation*}
\end{pro}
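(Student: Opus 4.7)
Since $\mc{M}(\chi\bar\psi;y,T)$ is non-increasing in $T$, in order for the conclusion to hold for all $T\geq 1$ it suffices to exhibit at least $\sqrt{N}$ primitive characters $\chi$ of order $g$ and conductor $q\leq N$, together with some fixed $t_0\in[-1,1]$, such that
\[
\mb{D}(\chi\bar\psi,n^{it_0};y)^2 \leq \delta_g\log_2 y + O(\log_2 m).
\]
I would take $t_0=1$ and prescribe $\chi$ on the primes $p \leq y$ coprime to $mg$ by setting $\chi(p):=\omega_p$, where $\omega_p\in\mu_g$ is the $g$-th root of unity closest in angle to $\psi(p)p^{it_0}$. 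The rotation $p^{it_0}$ is essential: it spreads the otherwise discrete arguments $\arg\psi(p)$ over the unit circle, producing the sharp constant $\delta_g$ uniformly in the order of $\psi$, and thereby avoiding the $(gk^*)^{-2}$ secondary term of Proposition~\ref{MINDIST} that would otherwise appear for $T\ll 1$.

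\textbf{Realization via effective Chebotarev.}  Prescribing $\chi(p_i)=\omega_{p_i}$ for the primes $p_1<\cdots<p_k$ up to $y$ (coprime to $mg$) is equivalent to fixing the Frobenius class of $q$ in the Kummer extension
\[
K := \mathbb{Q}\bigl(\zeta_g,\,p_1^{1/g},\ldots,p_k^{1/g}\bigr),
\qquad [K:\mathbb{Q}]=\phi(g)g^k,\quad \log|\Delta_K|\ll \phi(g)g^k y.
\]
Under GRH, the effective Chebotarev density theorem (Lagarias--Odlyzko) gives
\[
\#\{q\leq N \text{ prime with the prescribed Frobenius}\} = \frac{\mathrm{Li}(N)}{[K:\mathbb{Q}]} + O\bigl(N^{1/2}\log(N|\Delta_K|)\bigr).
\]
The hypothesis $y\leq (\log N)/10$ forces $g^k\leq g^{\pi(y)} = N^{o(1)}$ for fixed $g$, so the main term dominates and produces $\gg N^{1-o(1)}/\log N \gg \sqrt{N}$ such primes $q$. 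Each yields a primitive character of order $g$ and conductor $q\leq N$.

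\textbf{Distance estimate.}  Setting $\theta_p := \arg(\psi(p)p^{it_0})\bmod(2\pi/g) \in (-\pi/g,\pi/g]$,
\[
\mb{D}(\chi\bar\psi,n^{it_0};y)^2 = \sum_{p\leq y}\frac{1-\cos\theta_p}{p}.
\]
The $2\pi/g$-periodic function $\theta\mapsto 1-\cos(\theta\bmod 2\pi/g)$ has Fourier coefficients $|c_n|\ll n^{-2}$ at frequencies $ng$ with $c_0=\delta_g$; hence
\[
\sum_{p\leq y}\frac{1-\cos\theta_p}{p} = \delta_g\log_2 y + O(1) + \sum_{n\neq 0}c_n\sum_{p\leq y}\frac{\psi^{ng}(p)p^{ingt_0}}{p}.
\]
Under GRH, each inner sum equals $\log L(1-ingt_0,\psi^{ng})+O(1)$ (after absorbing the tail beyond $y$), and the standard bound $|\log L(1+i\tau,\chi)|\ll \log_2(m(|\tau|+3))$ controls each term by $O(\log_2 m + \log_2(|n|+2))$. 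Summing against $|c_n|\ll n^{-2}$ produces total error $O(\log_2 m)$, giving the claimed bound.

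\textbf{Main obstacle.}  The decisive technical point is the quantitative partial-sum/$L$-value transfer
\[
\sum_{p\leq y}\frac{\psi^{ng}(p)p^{ingt_0}}{p} = \log L(1-ingt_0,\psi^{ng}) + O\bigl(\log_2 m + \log_2(|n|+2)\bigr),
\]
uniform in $n$. The upper bound $m\leq \exp(\sqrt{\log y})$ is precisely what is required to keep $m$ within the range where this transfer (via Perron's formula under GRH) is clean, while the lower bound $m\geq \exp(2\sqrt{\log_3 y})$ ensures that $\log_2 m$ dominates the $O(1)$ Mertens constants and secondary PNT-in-AP terms, so that the cumulative error is genuinely $O(\log_2 m)$ rather than $O(\log_2 y)$.
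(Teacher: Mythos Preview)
Your approach is correct and genuinely different from the paper's. The paper (via Proposition~\ref{AD2}) sets $z=\exp((\log m)^2)$, partitions $(z,y]$ into short multiplicative intervals $(x_r,x_{r+1}]$, approximates $p^{-i}\approx x_r^{-i}$ on each, prescribes $\chi(p)$ interval-by-interval via Lemma~\ref{VEC} (Eisenstein reciprocity, no GRH needed for this step), counts primes in residue classes by the GRH-conditional prime number theorem in arithmetic progressions, and then evaluates the resulting Riemann sum through Lemmas~\ref{MAXGold} and \ref{IntegralFN}. You instead Fourier-expand the $2\pi/g$-periodic defect $1-\cos\!\bigl(\arg(\psi(p)p^{i})\bmod 2\pi/g\bigr)$ and bound each nonzero mode $\sum_{p\le y}\psi^{ng}(p)p^{ing}/p$ by $O(\log_2 m+\log_2(|n|+2))$ via the GRH estimate $|\log L(1+it,\chi)|\ll \log_2(m(|t|+3))$ together with a short-sum/$L$-value transfer (exactly the mechanism of the paper's Lemma~\ref{GoodLFunctions}, shifted to $1+it$). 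Summing against $|c_n|\ll n^{-2}$ then yields $O(\log_2 m)$.

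What each approach buys: the paper's route is more self-contained, recycling the $F_{gk^{\ast}}$ machinery already built for Proposition~\ref{MINDIST} and avoiding Littlewood-type bounds on $\log L(1+it,\chi)$. Your route is cleaner analytically and in fact does \emph{not} require the hypothesis $m\ge \exp(2\sqrt{\log_3 y})$; that lower bound is a technical artifact of the paper's choice $z=\exp((\log m)^2)$ (it forces $z\ge(\log_2 y)^4$ so that the PNT error $z^{-2/5}\delta^{-1}$ is $O(1)$), and your rationale for it is off. Two small remarks: the transfer lemma needs $y$ large relative to $m$ and to the twist height $|ng|$, which is guaranteed by $m\le\exp(\sqrt{\log y})$ together with a trivial truncation of the Fourier tail; and the character-realization step can be done more simply and unconditionally via Lemma~\ref{VEC} rather than effective Chebotarev.
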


The secondary term of size $\asymp (\log_2y)/k^2$ in the right hand side of \eqref{LowerBoundDistance2} is responsible for the additional saving of $(\log_2 Q)^{-1/4}$ (where $Q$ is defined in \eqref{THEQ}) in Theorems \ref{MCHIUP1} and \ref{MCHIUP2}; clearly, it does not appear in Proposition \ref{AD}, even in the range $m \ll (\log_2 y)^{\frac{1}{2}-\e}$ in which this secondary term is large.  Note that when $m$ is an exceptional modulus (see the precise definition in \eqref{EXCEPTIONALMODULUS} below), there is an additional term that appears when estimating $\mc{M}(\chi\bar{\psi}; y, (\log y)^{-\alpha})$  that has size $\log L(1, \chi_m)$, where $\chi_m$ is the exceptional character modulo $m$. In this case, the extra term $\varepsilon\log m$ on the right hand side of \eqref{LowerBoundDistance2} is due to Siegel's bound $L(1,\chi_m)\gg_{\e} m^{-\varepsilon}$.


To complete the proofs of Theorems \ref{MCHIUP1} and \ref{MCHIUP2}, we shall use our Theorem \ref{LogarithmicMean} to bound the sum \eqref{LOGMEANPN}, where we might choose $T=(\log y)^{-\alpha}$ to take advantage of Proposition \ref{MINDIST}. Note that in view of Proposition \ref{AD}, one loses the additional saving of $(\log_2 Q)^{-1/4}$ in Theorems \ref{MCHIUP1} and \ref{MCHIUP2} if one simply uses \eqref{HMTFIRST} with $T=(\log y)^2$, as in \cite{GOLD}. By using Theorem \ref{LogarithmicMean} and following the ideas in \cite{GrSo2}, we prove the following result, which is a refinement of Theorem 2.9 in \cite{GOLD}. This together with Proposition \ref{MINDIST} implies both Theorems \ref{MCHIUP1} and  \ref{MCHIUP2}.

\begin{thm}\label{COND}
Let $\chi$ be a primitive character modulo $q$, and let $Q$ be as in \eqref{THEQ}.  Of all primitive characters with conductor below $(\log Q)^{4/11}$, let $\xi$ modulo $m$ be that character for which $\mc{M}\left(\chi\bar{\xi};Q, (\log Q)^{-7/11}\right)$ is a minimum. Then we have
\begin{equation*}
M(\chi) \ll \Big(1-\chi(-1)\xi(-1)\Big)\frac{\sqrt{qm}}{\phi(m)} (\log Q) \exp\left(-\mc{M}\left(\chi\bar{\xi}; Q,(\log Q)^{-\frac{7}{11}}\right)\right) + \sqrt{q}\left(\log Q\right)^{\frac{9}{11} + o(1)}.
\end{equation*}
\end{thm}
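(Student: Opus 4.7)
The plan is to follow the Granville--Soundararajan--Goldmakher framework for reducing $M(\chi)$ to the analysis of short exponential sums, but to replace the Montgomery--Vaughan--Tenenbaum inequality \eqref{HMTFIRST} used in \cite{GOLD} by our sharper Theorem \ref{LogarithmicMean}, evaluated at $T = (\log Q)^{-7/11}$. The condition $T < 1$ is critical: Theorem \ref{LogarithmicMean} holds in this range without the $\sqrt{T}$-type loss present in \eqref{HMTFIRST}, and it is precisely for such small $T$ that Proposition \ref{MINDIST} produces the secondary term responsible for the $(\log_2 Q)^{-1/4}$ savings in Theorems \ref{MCHIUP1} and \ref{MCHIUP2}.

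First I apply P\'olya's Fourier expansion \eqref{Polya} with $N = q$ to reduce the problem to bounding $\sqrt{q}\sup_\theta |S_\chi(\theta)|$, where $S_\chi(\theta)$ is the exponential sum \eqref{EXPCHARA}. I then perform a Farey dissection of $[0,1]$ with respect to $R := (\log Q)^{4/11}$. On the minor arcs, the Montgomery--Vaughan bound from \cite{MV2} (combined with partial summation) contributes at most $\sqrt{q}(\log Q)^{9/11+o(1)}$, which is absorbed into the error term. On a major arc around $b/r$ with $r \le R$, I replace $e(n\theta)$ by $e(bn/r)$ at the cost of a controlled error; under GRH (when $Q = \log q$) I combine this with the friable approximation \eqref{APPROXFRIABLE} to restrict to $y$-friable $n$, choosing $y$ so that all approximation errors remain within the desired bound.

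Next I apply the Dirichlet character decomposition modulo $r$,
\[
\sum_{1 \le |n| \le N} \frac{\chi(n)}{n} e\!\left(\tfrac{bn}{r}\right) = \frac{1}{\phi(r)} \sum_{\psi \bmod r} \tau_b(\bar\psi) \sum_{1 \le |n| \le N} \frac{\chi\bar\psi(n)}{n},
\]
and observe that only those $\psi$ with $\chi(-1)\psi(-1) = -1$ contribute, since otherwise the $n$-sum vanishes under $n \mapsto -n$. For primitive $\psi$ of conductor $m$ the Gauss sum satisfies $|\tau_b(\bar\psi)| \le \sqrt{m}$, and I bound the inner logarithmic sum using Theorem \ref{LogarithmicMean}, obtaining
\[
\left|\sum_{n \le N} \frac{\chi\bar\psi(n)}{n}\right| \ll (\log Q)\exp\!\big(-\mc{M}(\chi\bar\psi; Q, T)\big) + (\log Q)^{7/11}.
\]
Isolating the distinguished character $\xi$, which by hypothesis minimizes $\mc{M}(\chi\bar\xi; Q, T)$ among primitive characters of conductor at most $R$, and summing the $1/\phi(r)$ factor over Farey denominators $r \le R$ divisible by $m$, produces the main term of Theorem \ref{COND}; the parity factor $(1 - \chi(-1)\xi(-1)) \in \{0, 2\}$ correctly tracks the opposite-parity requirement.

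The principal obstacle will be controlling the contribution of characters $\psi \ne \xi$, since a priori several distinct $\psi$ could have $\mc{M}(\chi\bar\psi; Q, T)$ comparable to the minimum. This is handled by a pretentiousness-repulsion argument in the spirit of \cite{GrSo2, GOLD}: if $\mc{M}(\chi\bar\psi_i; Q, T) \le \mc{M}(\chi\bar\xi; Q, T) + O(1)$ held simultaneously for two distinct primitive characters $\psi_1, \psi_2$ of small conductor, then by the triangle inequality for $\mb{D}$ the distance $\mb{D}(\psi_1\bar\psi_2, n^{it}; Q)^2$ would be abnormally small for some $|t| \le 2T$, contradicting known lower bounds on pretentious distances between distinct primitive characters of conductor $\le R$. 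Once this is established, the $(\log Q)^{7/11}$ errors from Theorem \ref{LogarithmicMean}, weighted by the Gauss sum factors $\sqrt{m}$ and summed over characters and Farey denominators, combine with the minor-arc contribution to give the total error $\sqrt{q}(\log Q)^{9/11+o(1)}$. The exponents $4/11$ and $7/11$ are calibrated so that all of these error sources balance.
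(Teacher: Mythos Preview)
Your plan coincides with the paper's proof: P\'olya's expansion, the friable reduction \eqref{APPROXFRIABLE} under GRH, a major/minor arc split, the Montgomery--Vaughan minor-arc bound, the Granville--Soundararajan character identity on major arcs, Theorem \ref{LogarithmicMean} (in its friable form, Theorem \ref{LogarithmicMean2}) with $T=(\log Q)^{-7/11}$, and a repulsion argument to isolate $\xi$---for which the paper quotes a variant of Lemma 3.1 of \cite{BGS}, giving $\mc{M}(\chi\bar{\psi_k};Q,T)\ge(1-k^{-1/2}+o(1))\log_2 Q$ for the $k$-th closest primitive $\psi_k$, so that the $k\ge 3$ contributions are dominated by the $1/T=(\log Q)^{7/11}$ error and the $k=2$ contribution is $(\log Q)^{1/\sqrt 2+o(1)}$.

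Two points need correction. First, you need two separate parameters: the paper takes $R=(\log Q)^5$ for the Dirichlet approximation $|\alpha-b/r|\le 1/(rR)$ and $M=(\log Q)^{4/11}$ for the major/minor threshold; with a single parameter $R=(\log Q)^{4/11}$ every $\alpha$ would lie on a major arc and your minor-arc step would be vacuous. Second, you do not ``sum the $1/\phi(r)$ factor over Farey denominators $r\le R$ divisible by $m$'': for each fixed $\alpha$ there is a single approximating $b/r$, and the main term arises from the (at most one) $\psi\bmod r/d$ induced by $\xi$ in the identity \eqref{GrSoIdentity}. The factor $\sqrt{m}/\phi(m)$ comes from $|\tau(\xi)|=\sqrt{m}$ together with an explicit evaluation of the divisor sum over $d\mid(r/m)$ (see \eqref{INDUCED}--\eqref{INDUCED2}), not from summing over $r$.
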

Note that $\delta_g$ is decreasing as a function of $g$, so $1-\delta_g \geq 1-\delta_3\approx 0.827>9/11$ for all $g \geq 3$. Therefore, when $\chi$ is a primitive character of odd order $g\geq 3$ and conductor $q$, we get the better bound 
$ M(\chi)\ll \sqrt{q}\left(\log Q\right)^{\frac{9}{11} + o(1)}$, unless $\xi$ is odd and $\mc{M}\left(\chi\bar{\xi}; Q,(\log Q)^{-\frac{7}{11}}\right)$ is small.


We next discuss the ideas that go into the proof of Theorem \ref{MCHILOW}. 
To obtain \eqref{GSDOWN} under GRH, Goldmakher \cite{GOLD} used the following result from \cite{GrSo2}, which relates $M(\chi)$ to the distance between $\chi$ and any primitive character $\psi$ with small conductor and parity opposite to that of $\chi$. 
\begin{thm}[Theorem 2.5 of \cite{GrSo2}]\label{GrSoGRH}
Assume GRH. Let $\chi\bmod q$ and $\psi\bmod m$ be primitive characters such that $\chi(-1)=-\psi(-1)$. Then we have
$$
M(\chi) + \frac{\sqrt{qm}}{\phi(m)}\log_3q \gg \frac{\sqrt{qm}}{\phi(m)} (\log_2 q) \exp\left(-\mb{D}(\chi,\psi;\log q)^2\right).
$$
\end{thm}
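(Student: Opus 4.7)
The plan is to exhibit a specific partial sum of $\chi$ that is forced to be large by the pretentiousness of $\chi$ to $\psi$. The natural truncation point is $t = qb/m$ for some $b$ coprime to $m$, so that the character expansion of $e(-nb/m)$ isolates a favourable Gauss-sum phase at the character $\psi$.

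Applying P\'olya's expansion \eqref{Polya} at $t = qb/m$ with $N = q$ gives
$$\sum_{n\leq qb/m}\chi(n) = \frac{\tau(\chi)}{2\pi i}\sum_{1\leq |n|\leq q}\frac{\bar\chi(n)}{n}\bigl(1-e(-nb/m)\bigr) + O(\log q).$$
The constant piece $\sum_n \bar\chi(n)/n$ can be handled separately and is negligible. Into the remaining piece I would substitute the finite Fourier expansion of $e(-nb/m)$ in the Dirichlet characters modulo $m$, which produces a sum over $\psi'\bmod m$ with Gauss-sum coefficients of size at most $\sqrt{m}/\phi(m)$, multiplied by inner sums of the form $\sum_{1\leq|n|\leq q}\bar\chi(n)\psi'(n)/n$. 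The key observation is that each such inner sum vanishes unless $\bar\chi\psi'$ is odd, which by the parity hypothesis on $\psi$ does not kill the term $\psi' = \psi$.

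Isolating the $\psi' = \psi$ contribution and using $|\tau(\chi)| = \sqrt{q}$ together with $|\tau(\psi)| = \sqrt{m}$, I obtain a main term of magnitude
$$\frac{\sqrt{qm}}{\phi(m)}\Bigl|\sum_{n\leq q}\frac{\bar\chi(n)\psi(n)}{n}\Bigr|.$$
Under GRH, a standard contour shift yields $\sum_{n\leq q}\bar\chi\psi(n)/n = L(1,\bar\chi\psi) + O(q^{-1/2+o(1)})$, and moreover on GRH the value $L(1,\bar\chi\psi)$ is well-approximated by its truncated Euler product:
$$|L(1,\bar\chi\psi)| \asymp \prod_{p\leq \log q}\left|1-\frac{\bar\chi\psi(p)}{p}\right|^{-1} \asymp (\log_2 q)\exp\!\bigl(-\mathbb{D}(\chi,\psi;\log q)^2\bigr),$$
which produces exactly the main term on the right-hand side of the theorem.

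The hard part will be controlling the other characters $\psi' \neq \psi$ and the residual non-coprime piece. I would either average $b$ over residues coprime to $m$, using orthogonality to extract the $\psi' = \psi$ contribution cleanly and then selecting a specific $b$ attaining at least the average magnitude, or else bound each stray contribution directly via $|\tau(\psi')| \leq \sqrt{m^{\ast}}$ (with $m^{\ast}$ the conductor of $\psi'$) combined with a GRH-based bound on $\sum_{n\leq q}\bar\chi\psi'(n)/n$, for which $\psi'$-pretentiousness need not hold and hence the bound is smaller. All accumulated errors --- from P\'olya's truncation, the GRH approximation of $L(1,\bar\chi\psi)$ by a short Euler product, and the off-diagonal $\psi'$ terms --- must finally be absorbed into the slack $(\sqrt{qm}/\phi(m))\log_3 q$ on the left-hand side of the theorem, and setting up this accounting correctly is the most delicate step of the argument.
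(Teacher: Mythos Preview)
Your proposal is correct and matches the paper's approach (Proposition~\ref{CharSumL1} together with the GRH case of Proposition~\ref{BoundExcep}). The paper carries out precisely the averaging option you describe: multiplying the P\'olya expansion at $t=qb/m$ by $\psi(b)$ and summing over $b\bmod m$ collapses the exponential via the Gauss-sum identity $\sum_b \psi(b)e(nb/m)=\bar\psi(n)\tau(\psi)$, which simultaneously kills the constant piece, the non-coprime $n$, and all off-diagonal characters in one stroke---so the ``hard part'' you flag in fact disappears, and the slack needed on the left is only $\sqrt{q}$ rather than $\frac{\sqrt{qm}}{\phi(m)}\log_3 q$.
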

\noindent Thus, it only remains to produce characters $\chi$ and $\psi$ which satisfy the assumptions of Theorem \ref{GrSoGRH}, and for which the lower bound \eqref{LOWERBOUDISTANCE} is attained when $y=\log q$.  Using the Eisenstein reciprocity law, Goldmakher (see Proposition 9.3 of \cite{GOLD}) proved that for any $\varepsilon>0$, there exists an odd primitive character $\psi$ modulo $m\ll_{\varepsilon}1$, and an infinite family of primitive characters $\chi\bmod q$ of order $g$ such that 
\begin{equation}\label{GOLDreciprocity}
\mb{D}(\chi,\psi;\log q)^2\leq (\delta_g+\varepsilon)\log_3q.
\end{equation}
To remove the assumption of GRH, Goldmakher and Lamzouri \cite{GL1} (see Theorem 1 of \cite{GL1}) used ideas of Paley \cite{Pa} to obtain a weaker version of Theorem \ref{GrSoGRH} unconditionally. Namely, they showed that if $\chi$ is odd and $\psi$ is even then
$$M(\chi) + \sqrt{q}\gg \frac{\sqrt{qm}}{\phi(m)}\left(\frac{\log_2 q}{\log_3 q}\right) \exp\left(-\mb{D}(\chi,\psi;\log q)^2\right).$$
Although this bound is enough to obtain \eqref{GSDOWN} unconditionally in view of \eqref{GOLDreciprocity}, it is not sufficient to yield the precise estimate in Theorem \ref{MCHILOW}, due to the loss of a factor of $\log_3 q$ over Theorem \ref{GrSoGRH}. \\
Using a completely different method, based on zero density estimates for Dirichlet $L$-functions, we recover the original bound of Granville and Soundararajan unconditionally for all characters $\chi$ modulo $q$ with $q\leq N$, except for a small \emph{exceptional} set of cardinality $\ll N^{\varepsilon}$. Our argument also gives a simple proof of Theorem \ref{GrSoGRH}, which exploits the natural properties of the values of Dirichlet $L$-functions at $1$, and avoids the difficult study of exponential sums with multiplicative functions (see Section 6 of \cite{GrSo2}). Note that the statement of Theorem \ref{GrSoGRH} trivially holds when $m > \log q$, since $\mb{D}(\chi,\psi;\log q)^2\ll \log_3 q$. We thus only need to consider the case $m\leq \log q$.
\begin{thm} \label{GL}
Let $\e > 0$ and let $N$ be large. Let $m \leq \log N$ be a positive integer and let $\psi$ be a primitive character modulo $m$.  Then, for all but at most $N^{\varepsilon}$ primitive characters $\chi$ modulo $q$ with   $q\leq N$ and such that $\chi(-1)=-\psi(-1)$ we have 
\begin{equation}\label{LowerBoundMchi}
M(\chi) + \sqrt{q} \gg_{\varepsilon} \frac{\sqrt{qm}}{\phi(m)} (\log_2 q) \exp\left(-\mb{D}(\chi,\psi;\log q)^2\right).
\end{equation}
Moreover, if we assume GRH, then \eqref{LowerBoundMchi} is valid for all primitive characters $\chi$ modulo $q$ with $q\leq N$, and the implicit constant in \eqref{LowerBoundMchi} is absolute.
\end{thm}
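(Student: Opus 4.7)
The plan is to give a direct argument that converts $M(\chi)$ into a lower bound for $|L(1,\chi\bar\psi)|$ via P\'olya's formula and Gauss sum orthogonality, and then to lower-bound the $L$-value by its truncated Euler product using a zero-density estimate (replaced by GRH unconditionally). This bypasses the exponential-sum analysis in Section 6 of \cite{GrSo2}. In detail, for $(a,m)=1$ put $S(a) := \sum_{n\leq qa/m}\chi(n)$; then $|S(a)|\leq M(\chi)$. The symmetric P\'olya expansion at $t = qa/m$, truncated at $N=q$, gives
$$S(a) = \frac{\tau(\chi)}{2\pi i}\sum_{0<|n|\leq q}\frac{\bar\chi(n)}{n}\bigl(1-e(-na/m)\bigr) + O(\log q).$$
Averaging against $\psi(a)$: the constant $1$ vanishes since $\psi$ is non-principal, while the exponential contributes $\sum_a \psi(a)e(-na/m) = \psi(-1)\bar\psi(n)\tau(\psi)$ by primitivity of $\psi$. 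Symmetrizing in the sign of $n$, the parity hypothesis $\chi(-1) = -\psi(-1)$ makes the $n>0$ and $n<0$ terms \emph{add} (rather than cancel), yielding
$$\sum_{(a,m)=1}\psi(a)\,S(a) = -\frac{\tau(\chi)\psi(-1)\tau(\psi)}{\pi i}\,L(1,\bar\chi\bar\psi) + O\bigl((\phi(m)+m)\log q\bigr),$$
where the second error bounds the tail $\sum_{n>q}\bar\chi\bar\psi(n)/n$ via Abel summation and P\'olya--Vinogradov applied to the primitive character inducing $\bar\chi\bar\psi$. Since the left side is at most $\phi(m)M(\chi)$ in modulus, $|\tau(\chi)\tau(\psi)| = \sqrt{qm}$, and the error is $\ll \sqrt q$ for $m \leq \log N$, we obtain $M(\chi) + \sqrt q \gg (\sqrt{qm}/\phi(m))\,|L(1,\chi\bar\psi)|$.

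The second step is to show $|L(1,\chi\bar\psi)| \gg (\log_2 q)\exp\bigl(-\mb{D}(\chi,\psi;\log q)^2\bigr)$. By Mertens,
$$\sum_{p\leq \log q}\frac{\re(\chi(p)\bar\psi(p))}{p} = \log_3 q - \mb{D}(\chi,\psi;\log q)^2 + O(1),$$
so after exponentiation the task reduces to the lower bound
$$\log|L(1,\chi\bar\psi)| \geq \sum_{p\leq \log q}\frac{\re(\chi(p)\bar\psi(p))}{p} + O(1). \quad(\star)$$
This is the standard bridge between an $L$-value and its Euler product truncated at $y=\log q$, valid provided $L(s,\chi\bar\psi)$ has no zero in a rectangle of the form $\{\sigma \geq 1-c,\ |t|\leq \log q\}$ for some small absolute $c>0$: one shifts a contour for $\log L$ from $\re(s) = 1 + 1/\log q$ past $\re(s)=1$ and invokes Mertens. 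This is the technically delicate point, since the constant multiplying the prime sum in $(\star)$ must be exactly $1$ to produce the sharp $\log_2 q$ factor (rather than some smaller power $(\log_2 q)^c$).

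To complete the argument we establish the required zero-free region for most $\chi$. As $\chi$ varies over primitive characters of conductor $q \leq N$, the twists $\chi\bar\psi$ are characters of conductor $\leq Nm \leq N\log N$. A log-free zero-density estimate of Montgomery--Huxley type shows that the number of such $\chi$ for which $L(s,\chi\bar\psi)$ has a zero in $\{\sigma \geq 1-c,\ |t|\leq \log N\}$ is $\ll N^{C(c)}$, with $C(c)\to 0$ as $c\to 0$; choosing $c = c(\varepsilon)$ small enough makes this $\ll N^{\varepsilon}$. Possible Siegel zeros of $\chi\bar\psi$ are likewise absorbed into the $N^{\varepsilon}$ exceptional set by Siegel's ineffective bound $|L(1,\chi\bar\psi)| \gg_\varepsilon (qm)^{-\varepsilon}$. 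Combining with Step~1 yields the unconditional part of the theorem. Under GRH, $L(s,\chi\bar\psi)$ has no zeros in $\re(s) > 1/2$ for every primitive $\chi$, so the hypothesis of Step~2 holds universally with absolute constants, which delivers the conditional part at once.
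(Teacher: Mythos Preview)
Your proposal is correct and follows essentially the same route as the paper's own proof: first relate $M(\chi)$ to $|L(1,\chi\bar\psi)|$ via P\'olya's expansion averaged against $\psi$ (the paper's Proposition~\ref{CharSumL1}), then approximate $L(1,\chi\bar\psi)$ by a short Euler product using a classical zero-density estimate for most $\chi$, or GRH for all $\chi$ (the paper's Proposition~\ref{BoundExcep}, based on Bombieri's density theorem rather than Montgomery--Huxley, but to the same effect). Two small points worth tidying: (i) after averaging against $\psi$ your display produces $L(1,\bar\chi\bar\psi)$, not $L(1,\chi\bar\psi)$ as you write afterwards --- this is harmless (conjugate, or average against $\bar\psi$ instead, or apply P\'olya to $\bar\chi$ as the paper does), but should be made consistent; (ii) the separate appeal to Siegel's bound is unnecessary and slightly off-target: the zero-density estimate already counts \emph{all} zeros in the rectangle, exceptional or not, so potential Siegel zeros are absorbed into the $N^{\varepsilon}$ exceptional set automatically.
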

To complete the proof of Theorem \ref{MCHILOW}, we thus need to refine the estimate \eqref{GOLDreciprocity}, and this can be achieved using the same ideas as in the proof of Proposition \ref{MINDIST}. However, Goldmakher's proof of \eqref{GOLDreciprocity} only produces an infinite sequence of primitive characters $\chi$, and this is not enough to use in Theorem \ref{GL}, due to the possible existence of an exceptional set of characters for which \eqref{LowerBoundMchi} does not hold. To overcome this difficulty, we use the results of \cite{LAM} to prove the existence of \emph{many} primitive characters $\chi$ of order $g$ and conductor $q\leq N$ such that when $y \ll \log N$, $\mb{D}(\chi,\psi;y)$ is maximal.

\begin{pro} \label{UPPER}
Let $g\geq 3$ be a fixed odd integer. Let $N$ be large and $y\leq (\log N)/10$ be a real number. Let $m$ be a non-exceptional modulus such that $m\leq (\log y)^{4/7}$, and let $\psi$ be an odd primitive character of conductor $m$. Let $k$ be the order of $\psi$ and  put $k^{\ast}=k/(g,k)$.  Then, there exist at least $\sqrt{N}$ primitive characters $\chi$ of order $g$ and conductor $q\leq N$ such that
\begin{equation} \label{OPT}
\mb{D}(\chi,\psi;y)^2 =  \left(1-(1-\delta_g)\frac{\pi/gk^{\ast}}{\tan(\pi/gk^{\ast})}\right) \log_2y + O\left(\log_2 m\right).
\end{equation}
\end{pro}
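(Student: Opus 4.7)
The plan is: first (i) identify the right-hand side of \eqref{OPT} as the minimum value of $\mb{D}(\chi,\psi;y)^2$ achievable by any character $\chi$ of order $g$ (obtained by minimizing the local contribution at each prime); then (ii) construct at least $\sqrt{N}$ primitive characters of order $g$ and conductor $\leq N$ that come within $O(\log_2 m)$ of this minimum.

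For (i), observe that at each prime $p\nmid m$, the value $\chi(p)\bar\psi(p)$ lies in the coset $\bar\psi(p)\cdot\{e^{2\pi ib/g}:0\leq b<g\}$, a set of $g$ equally spaced points on the unit circle. Writing $\bar\psi(p)=e^{2\pi ia_p/k}$, the element of the coset closest to $1$ differs from $1$ by angle $2\pi\eta_p$ with $\eta_p=\|ga_p/k\|/g$ (where $\|\cdot\|$ denotes distance to the nearest integer); hence
\[
\min_{b\in\mathbb{Z}/g}\bigl(1-\re(e^{2\pi ib/g}\bar\psi(p))\bigr)=2\sin^2(\pi\eta_p),
\]
and this minimum depends only on $a_p\bmod k^*$. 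Under the hypotheses ($m$ non-exceptional and $m\leq(\log y)^{4/7}$), Siegel--Walfisz gives equidistribution of $a_p\bmod k^*$ across the $k^*$ classes as $p$ varies over primes $\leq y$ with $p\nmid m$, so summing yields
\[
\sum_{\substack{p\leq y\\p\nmid m}}\frac{2\sin^2(\pi\eta_p)}{p}=\Bigl(\frac{1}{k^*}\sum_{j=0}^{k^*-1}\bigl(1-\cos(2\pi\|j/k^*\|/g)\bigr)\Bigr)\log_2 y+O(\log_2 m).
\]
A Dirichlet-kernel computation --- crucially using that $k^*$ is \emph{even} (since $\psi$ odd gives $k$ even and $g$ odd gives $\gcd(g,k)$ odd, so $k^*=k/\gcd(g,k)$ is even) --- evaluates the inner sum to $\sin(\pi/g)\cos(\pi/(gk^*))/(k^*\sin(\pi/(gk^*)))$, which by $(1-\delta_g)=(g/\pi)\sin(\pi/g)$ equals $(1-\delta_g)\,(\pi/(gk^*))/\tan(\pi/(gk^*))$. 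This produces the right-hand side of \eqref{OPT} as a lower bound for $\mb{D}(\chi,\psi;y)^2$ valid for every $\chi$ of order $g$, up to $O(\log_2 m)$.

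For (ii), following the strategy of \cite{LAM}, I would exhibit an explicit family of primitive characters $\chi$ of order $g$: those with squarefree conductor $q=p_1\cdots p_s\leq N$ where each $p_i\equiv 1\pmod{gm}$, presented as products $\chi=\prod_i\chi_{p_i}^{e_i}$ with each $\chi_{p_i}$ a character of order $g$ modulo $p_i$ and $e_i\in\{1,\ldots,g-1\}$. By Linnik's theorem there are abundantly many such primes in each prescribed residue class mod $gm$, so the family has size $\gg N^{1-o(1)}$. A counting argument (based on orthogonality of characters modulo $m$ combined with zero-density/moment inputs from \cite{LAM}) then shows that one may choose the $p_i$, $e_i$, and $\chi_{p_i}$ so that for a positive fraction of the resulting $\chi$, the local values $\chi(p)$ at primes $p\leq y$ coprime to $qm$ match the per-prime optimum identified in (i) on a density-one subset of primes, yielding $\gg\sqrt{N}$ characters for which $\mb{D}(\chi,\psi;y)^2$ agrees with the right-hand side of \eqref{OPT} to within $O(\log_2 m)$.

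The main obstacle is step (ii): the per-prime optimal assignment $p\mapsto\chi_*(p)$ depends on $\psi(p)$ through a \emph{non-multiplicative} (rounding-based) map --- the nearest $g$-th root of unity to $\overline{\psi(p)}$ is not a group homomorphism in $\psi(p)$ --- so no character of the form $\chi_*\circ\psi$ realizes the optimum at every prime by direct substitution. Instead optimality must be realized ``on average'' along the constructed family, by controlling the pointwise deviations $\chi(p)-\chi_*(p)$ simultaneously at all small primes via the $L$-function zero-density and moment techniques of \cite{LAM}, which is the key technical input preserving the order and conductor constraints across all $\gg\sqrt{N}$ produced characters.
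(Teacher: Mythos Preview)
Your part (i) is essentially the paper's Proposition \ref{UPPER2} together with Proposition \ref{PreciseSumMax} and Lemma \ref{MAX}: identifying the minimum of $\mb{D}(\chi,\psi;y)^2$ over order-$g$ characters as the stated expression, with the error $O(\log_2 m)$ coming from the Mertens constants summed over residue classes (your appeal to Siegel--Walfisz is correct in spirit, though the $O(\log_2 m)$ really comes from bounding $\sum_{a}|C_m(a)|$ as in Lemma \ref{AverageMertens}). This part is fine.

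Part (ii) has a genuine gap. You need $\gg\sqrt{N}$ primitive characters $\chi$ of order $g$ whose values at \emph{every} prime $p\leq y$ hit the per-prime optimum $z_p$ exactly (or nearly so). You propose to extract this from moment and zero-density arguments in the style of \cite{LAM}, but those methods, as the paper explicitly remarks before Lemma \ref{VEC}, only handle the single target vector $\mbf{z}=\mbf{1}$; the proof there does not appear to generalize to arbitrary $\mbf{z}\in(\mu_g\cup\{0\})^{\pi(y)}$. Your ``density-one subset of primes'' fallback is also not enough: to land within $O(\log_2 m)$ of the optimum you would need the exceptional primes to contribute only $O(\log_2 m)$ in logarithmic density, and an averaging argument over the family gives no such control --- for a typical $\chi$ the fraction of $p\leq y$ with $\chi(p)=z_p$ is only about $1/g$, not close to $1$.

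The paper's device for (ii) is short and different from what you sketch. It separates the two requirements: first, Goldmakher's Eisenstein-reciprocity construction (Lemma \ref{FIX}) produces a \emph{single} primitive character $\xi$ of order $g$ with $\xi(p)=z_p$ for all $p\leq y$ (with $p\nmid g$) and conductor $\ell\asymp e^{y}\leq N^{1/8}$; second, Lemma 2.3 of \cite{LAM} supplies $\gg N^{3/4+o(1)}$ primitive characters $\psi_n$ of order $g$ with conductor $n$ coprime to $\ell$ and $\psi_n(p)=1$ for all $p\leq y$. The products $\chi=\psi_n\xi$ are then primitive of order $g$, conductor $n\ell\leq N$, and satisfy $\chi(p)=z_p$ for every small $p$; this is Lemma \ref{VEC}. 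Plugging this into \eqref{DIST} (now with equality) and invoking Proposition \ref{PreciseSumMax} finishes the proof in two lines. The missing idea in your proposal is precisely this multiplication trick: obtain the arbitrary target $\mbf{z}$ from one reciprocity-built character, and the multiplicity from the $\mbf{z}=\mbf{1}$ case.
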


\section{A lower bound for $M(\chi)$: Proof of Theorem \ref{GL}}

The main ingredient in the proof of  Theorem \ref{GrSoGRH} of \cite{GrSo2} is the approximation \eqref{APPROXFRIABLE}, which is valid under the assumption of GRH. 
To avoid this assumption, we shall instead relate $M(\chi)$ to the values of certain Dirichlet $L$-functions at $s=1$, and then use the classical zero-density estimates for these $L$-functions. \begin{pro}\label{CharSumL1}
Let $q$ be large and $m \leq q/(\log q)^2$. Let $\chi\bmod q$ and $\psi\bmod m$ be primitive characters such that $\psi(-1)=-\chi(-1)$. Then we have
$$ M(\chi)+\sqrt{q} \gg \frac{\sqrt{qm}}{\phi(m)} \cdot \left|L\left(1, \chi\overline{\psi}\right)\right|.$$
\end{pro}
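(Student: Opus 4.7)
The idea is to evaluate the twisted sum
$$
\Sigma \;:=\; \sum_{a=1}^{m} \bar\psi(a)\, S_\chi\!\left(\tfrac{aq}{m}\right), \qquad S_\chi(t) := \sum_{n \leq t} \chi(n),
$$
in two ways. On one hand, since $|S_\chi(aq/m)| \leq M(\chi)$ and there are $\phi(m)$ values of $a$ with $\bar\psi(a) \neq 0$, the triangle inequality gives $|\Sigma| \leq \phi(m) M(\chi)$. On the other hand, P\'olya's Fourier expansion \eqref{Polya} will produce a main term of size $\asymp \sqrt{qm}\,|L(1,\chi\bar\psi)|$; dividing by $\phi(m)$ will deliver the claimed inequality.

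To compute $\Sigma$ via \eqref{Polya}, apply it at $t = aq/m$ with parameter $N = q^2$ and interchange the order of summation. The inner sum over $a$ becomes $\sum_{a \bmod m} \bar\psi(a)\bigl(1 - e(-an/m)\bigr)$. The first part vanishes by orthogonality since $\bar\psi$ is non-principal, and a standard twisted Gauss-sum computation yields $\sum_a \bar\psi(a)\, e(-an/m) = \psi(-1)\psi(n)\tau(\bar\psi)$ when $(n,m)=1$, and $0$ otherwise (the vanishing in the latter case uses the primitivity of $\psi$). The coprimality condition $(n,m)=1$ then becomes automatic in what follows, since $\psi(n)=0$ when $(n,m)>1$.

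The opposite-parity hypothesis gives $\chi(-1)\psi(-1) = -1$, so the function $n \mapsto \bar\chi(n)\psi(n)/n$ is \emph{even} on $\mathbb{Z}\setminus\{0\}$, and the sum over $1 \leq |n| \leq N$ collapses to $2\sum_{n=1}^{N}\bar\chi(n)\psi(n)/n$. Because $\bar\chi\psi$ is a non-principal character modulo $qm$ (as $m < q$ and both $\chi,\psi$ are primitive), the P\'olya--Vinogradov inequality combined with partial summation gives
$$
\sum_{n=1}^{N} \frac{\bar\chi(n)\psi(n)}{n} \;=\; L(1,\bar\chi\psi) \;+\; O\!\left(\frac{\sqrt{qm}\,\log(qm)}{N}\right).
$$
Assembling all of this, and using $|\tau(\chi)|=\sqrt{q}$, $|\tau(\bar\psi)|=\sqrt{m}$, together with $|L(1,\bar\chi\psi)| = |L(1,\chi\bar\psi)|$, we arrive at
$$
|\Sigma| \;\geq\; \frac{\sqrt{qm}}{\pi}\,|L(1,\chi\bar\psi)| \;-\; O\!\left(\phi(m) + \frac{qm\log(qm)}{N}\right).
$$
With $N = q^2$, the second error term is $O(m\log(qm)/q) = O(1)$ by the hypothesis $m \leq q/(\log q)^2$. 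Combined with $|\Sigma| \leq \phi(m) M(\chi)$ and division by $\phi(m)$, this gives $M(\chi) \gg (\sqrt{qm}/\phi(m))\,|L(1,\chi\bar\psi)| - O(1)$, and the $O(1)$ is absorbed by the $\sqrt{q}$ term on the left-hand side of the claim.

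The main (and essentially only) obstacle is the bookkeeping of the two error contributions: the P\'olya remainder is inflated by a factor of $\phi(m)$ when summed over residues, and the tail of the partial sum defining $L(1,\bar\chi\psi)$ acquires a $\sqrt{qm}$ factor from the Gauss-sum normalization. Both must be dominated by $\phi(m)\sqrt{q}$, and this is precisely what the hypothesis $m \leq q/(\log q)^2$ guarantees with the choice $N = q^2$.
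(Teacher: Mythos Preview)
Your proof is correct and follows essentially the same approach as the paper: both average P\'olya's Fourier expansion over residues $a \bmod m$ weighted by a character of modulus $m$, which collapses the exponential factor into a Gauss sum and leaves the twisted logarithmic sum $\sum \chi\bar\psi(n)/n \approx L(1,\chi\bar\psi)$. The only cosmetic differences are that the paper takes $N=q$ in P\'olya's expansion and handles the tail $\sum_{n>q}\chi\bar\psi(n)/n$ separately via a short lemma (Lemma~\ref{PVAPP}), whereas you take $N=q^2$ and absorb both error sources directly; and the paper phrases the upper bound as $M(\chi) \gg \sqrt{q}\max_\theta|\cdots|$ followed by averaging, while you average $S_\chi(aq/m)$ first and bound by $\phi(m)M(\chi)$ --- these are equivalent reorderings of the same argument.
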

We first need the following lemma.
\begin{lem} \label{PVAPP}
Let $q$ be large and $m \leq q/(\log q)^2$. Let $\chi$ be a character modulo $q$ and $\psi$ be a character modulo $m$ such that $\chi \bar{\psi}$ is non-principal. Then
\begin{equation*}
L(1,\chi\bar{\psi}) = \sum_{n \leq q} \frac{\chi(n)\bar{\psi}(n)}{n} + O(1).
\end{equation*}
\end{lem}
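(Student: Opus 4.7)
The plan is to express the difference $L(1,\chi\bar{\psi})-\sum_{n\leq q}\chi(n)\bar{\psi}(n)/n$ as a tail of the Dirichlet $L$-series and bound that tail using the P\'olya--Vinogradov inequality applied to the product character $\chi\bar{\psi}$, exploiting the hypothesis $m\leq q/(\log q)^2$ in the final step.

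First I would record that, by hypothesis, $\chi\bar{\psi}$ is a non-principal Dirichlet character of modulus dividing $qm$. A standard partial-summation argument (using only the boundedness of partial sums of any non-principal character) shows that the series defining $L(s,\chi\bar{\psi})$ converges conditionally at $s=1$ to $L(1,\chi\bar{\psi})$. Hence
$$
L(1,\chi\bar{\psi}) - \sum_{n\leq q}\frac{\chi(n)\bar{\psi}(n)}{n} = \sum_{n>q}\frac{\chi(n)\bar{\psi}(n)}{n},
$$
and the lemma reduces to proving that this tail is $O(1)$.

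Next I would apply Abel summation to the tail. Writing $T(t):=\sum_{q<n\leq t}\chi(n)\bar{\psi}(n)$, partial summation gives
$$
\sum_{q<n\leq X}\frac{\chi(n)\bar{\psi}(n)}{n} = \frac{T(X)}{X} + \int_{q}^{X}\frac{T(t)}{t^{2}}\,dt.
$$
The key input is the P\'olya--Vinogradov inequality applied to $\chi\bar{\psi}$ viewed as a non-principal character of modulus at most $qm$, which yields $|T(t)|\ll \sqrt{qm}\log(qm)$ uniformly in $t\geq q$. Letting $X\to\infty$ the boundary term vanishes, and the integral is bounded by
$$
\int_{q}^{\infty}\frac{\sqrt{qm}\log(qm)}{t^{2}}\,dt \ll \sqrt{\frac{m}{q}}\,\log(qm).
$$
The hypothesis $m\leq q/(\log q)^{2}$ then forces $\sqrt{m/q}\leq 1/\log q$ and $\log(qm)\leq 2\log q$, so the tail is $O(1)$, as required.

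I do not foresee any serious obstacle: the lemma is essentially a packaging of partial summation with P\'olya--Vinogradov, and the quantitative assumption on $m$ is precisely tuned so that the tail is $O(1)$ rather than growing. The only minor point requiring a word of care is that $\chi\bar{\psi}$ need not be primitive, but since P\'olya--Vinogradov holds for \emph{any} non-principal Dirichlet character (with a logarithmic factor in its modulus), this is immaterial.
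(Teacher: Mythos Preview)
Your proposal is correct and follows essentially the same approach as the paper: bound the tail $\sum_{n>q}\chi\bar{\psi}(n)/n$ via partial summation together with the P\'olya--Vinogradov inequality applied to the non-principal character $\chi\bar{\psi}$ of modulus at most $qm\leq (q/\log q)^2$. The paper's write-up is terser (using the discrete form $\sum_{q<n\leq N}\frac{1}{n(n+1)}\sum_{q<k\leq n}\chi\bar{\psi}(k)+O(1)$), but the argument is the same.
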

\begin{proof}
Note that $\chi\bar{\psi}$ is a non-principal character of conductor at most $qm\leq (q/\log q)^2$. Therefore, using partial summation and the P\'{o}lya-Vinogradov inequality we obtain
\begin{align*}
\sum_{q < n \leq N} \frac{\chi(n)\bar{\psi}(n)}{n} &= \sum_{q<n\leq N} \frac{1}{n(n+1)}\left(\sum_{q < k \leq n} \chi\bar{\psi}(k)\right) +O(1) \ll 1,
\end{align*}
and the claim follows.
\end{proof}
\begin{proof}[Proof of Proposition \ref{CharSumL1}]
Taking $N=q$ in \eqref{Polya} gives
$$M(\chi)+\log q\gg \sqrt{q} \cdot \max_{\theta}\left|\sum_{1\leq |n|\leq q} \frac{\chi(n)}{n}
		\left(1-e\left(n \theta\right)\right)\right|.$$
Moreover,  we observe that
\begin{align*}
\sum_{b\bmod m} \psi(b) \sum_{1\leq |n|\leq q} \frac{\chi(n)}{n}\left(1-e\left(\frac{nb}{m}\right)\right)
&=-\sum_{1\leq |n|\leq q}\frac{\chi(n)}{n}\sum_{b\bmod m} \psi(b)e\left(\frac{nb}{m}\right)\\
&= -\tau(\psi)\sum_{1\leq |n|\leq q}\frac{\chi(n)\bar{\psi}(n)}{n},\\
\end{align*}
which follows from the identity
$$ \sum_{b\bmod m} \psi(b)e\left(\frac{nb}{m}\right)= \bar{\psi}(n)\tau(\psi).$$
Since $\chi$ and $\psi$ are primitive and $m\leq q/(\log q)^2$ then $\chi\bar{\psi}$ is non-principal. Therefore, by Lemma \ref{PVAPP} together with the fact that $\chi\bar{\psi}(-1)=-1$ we deduce that
$$ \sum_{1\leq |n|\leq q}\frac{\chi(n)\bar{\psi}(n)}{n}= 2 \sum_{1\leq n\leq q}\frac{\chi(n)\bar{\psi}(n)}{n}
= 2 L(1,\chi\bar{\psi})+O(1).$$
The result follows upon noting that
$$ \left|\sum_{b\bmod m} \psi(b) \sum_{1\leq |n|\leq q} \frac{\chi(n)}{n}\left(1-e\left(\frac{nb}{m}\right)\right)\right|\leq \phi(m) \cdot \max_{\theta}\left|\sum_{1\leq |n|\leq q} \frac{\chi(n)}{n}
		\left(1-e\left(n \theta\right)\right)\right|,
$$
and that $|\tau(\psi)| = \sqrt{m}$ by the primitivity of $\psi$.
\end{proof}

In order to complete the proof of Theorem \ref{GL}, we need to approximate $L(1,\chi\bar{\psi})$ by a short truncation of its Euler product. Using zero density estimates, we prove that this is possible for almost all primitive characters $\chi$. 
\begin{pro}\label{BoundExcep}
Fix $0<\varepsilon<1$ and let $A=100/\varepsilon$.   Let $N$ be large and $m \leq \log N$. Then for all but at most $N^{\varepsilon}$ primitive characters $\chi$ modulo $q \leq N$ we have
\begin{equation}\label{ShortApproxL1chi}
L(1,\chi\bar{\psi}) = \left(1+O\left(\frac{1}{\log N}\right)\right)\prod_{p \leq \log^A N} \left(1-\frac{\chi(p)\bar{\psi(p)}}{p}\right)^{-1} .
\end{equation}
for all primitive characters $\psi$ modulo $m$. Moreover, if we assume GRH, then \eqref{ShortApproxL1chi} is valid with $A=10$, for all primitive characters $\chi$ modulo $q\leq N$ and $\psi$ modulo $m$.
\end{pro}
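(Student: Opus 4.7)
The plan is to reduce \eqref{ShortApproxL1chi} to a statement about zero-free regions for the twisted $L$-function $L(s,\chi\bar\psi)$ and then use a standard zero-density estimate to bound the number of $\chi$ for which the required zero-free region fails. Since $\chi$ and $\psi$ are primitive with $\chi(-1)=-\psi(-1)$, the character $\chi\bar\psi$ is non-principal and is induced from a primitive character of conductor dividing $qm\le N\log N$. Taking logarithms of \eqref{ShortApproxL1chi} and noting that the prime-power tail with $p>y$ contributes $\ll 1/y$ and is negligible, the claim is equivalent to
\begin{equation*}
\log L(1,\chi\bar\psi)-\sum_{p^{k}\le y}\frac{(\chi\bar\psi(p))^{k}}{kp^{k}}\ll\frac{1}{\log N},\qquad y=(\log N)^{A}.
\end{equation*}

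The analytic core is a standard contour-shift lemma: if $L(s,\chi\bar\psi)$ is non-vanishing in the rectangle $\{\sigma\ge 1-\eta,\ |t|\le T\}$, then applying Perron's formula to $\log L(s+1,\chi\bar\psi)\cdot y^{s}/s$ and shifting the contour from $\re(s)=1/\log y$ past $\re(s)=-\eta$, picking up the pole at $s=0$ whose residue is $\log L(1,\chi\bar\psi)$, yields an error
\begin{equation*}
\ll y^{-\eta}(\log qmT)^{2}+\frac{(\log qmT)^{2}}{T},
\end{equation*}
where the uniform bound $|\log L(s,\chi\bar\psi)|\ll \log(qmT)$ on the shifted contour is obtained from Borel--Carath\'eodory applied to $\log L$. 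Under GRH one takes $\eta=1/2$, $T=(\log N)^{2}$, and $y=(\log N)^{10}$; the right-hand side is then $\ll(\log N)^{-3}$, establishing the GRH part of the proposition with $A=10$.

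For the unconditional part, fix $\eta=\varepsilon/(10c)$ and $T=(\log N)^{2}$, where $c$ is the absolute constant in Montgomery's log-free zero-density estimate
\begin{equation*}
\sum_{q'\le Q}\ \sum_{\chi'\bmod q'}^{*}N(\sigma,T,\chi')\ll (Q^{2}T)^{c(1-\sigma)}(\log QT)^{B}.
\end{equation*}
Applied with $Q=N\log N$ and $\sigma=1-\eta$, this bounds the number of primitive characters of conductor $\le N\log N$ whose $L$-function has a zero in the bad rectangle by $\ll N^{2c\eta+o(1)}\ll N^{\varepsilon/4}$. For fixed $\psi$ the assignment $\chi\mapsto$ (primitive inducer of $\chi\bar\psi$) is injective on primitive $\chi$, and the zeros of $L(s,\chi\bar\psi)$ agree with those of its primitive inducer away from $\sigma=0$; summing over the at most $\phi(m)\le \log N$ primitive characters $\psi\bmod m$ gives a total exceptional set of size $\ll(\log N)\cdot N^{\varepsilon/4}\ll N^{\varepsilon}$. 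For every $\chi$ outside this set, the contour-shift lemma applied with $A=100/\varepsilon$ produces error $\ll(\log N)^{-A\eta}(\log N)^{O(1)}\ll 1/\log N$, as required.

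The \emph{main obstacle} is the clean execution of the contour-shift lemma: one must prove the uniform bound $|\log L(s,\chi\bar\psi)|\ll\log(qmT)$ throughout the zero-free rectangle via Borel--Carath\'eodory starting from the standard convexity bound on $|L(s,\chi\bar\psi)|$, and then carefully absorb the Perron truncation and horizontal-segment errors. Once this lemma is established, the parameter balance is routine, and the exponent $A=100/\varepsilon$ is visibly generous.
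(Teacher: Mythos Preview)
Your approach is essentially the same as the paper's: a Perron/contour-shift lemma (the paper's Lemma~\ref{GoodLFunctions}) combined with a zero-density estimate (the paper uses Bombieri's $N^{6(1-\sigma)}$ bound rather than Montgomery's log-free one, but either suffices) and the injectivity of $\chi\mapsto(\text{primitive inducer of }\chi\bar\psi)$ (the paper's Lemma~\ref{InducedFixed}). One small slip to fix: with your stated truncation error $(\log qmT)^{2}/T$ and your choice $T=(\log N)^{2}$, that term is only $O(1)$, not $\ll(\log N)^{-3}$ or $\ll 1/\log N$ as you assert; either sharpen the horizontal-segment bound to $(\log qmT)/(\eta T)$, which is what the contour shift actually gives, or simply take $T$ a bit larger (the paper takes $T=X=(\log N)^{A}$). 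Also, the parity hypothesis $\chi(-1)=-\psi(-1)$ is not part of this proposition and is not needed---one only requires $\chi\bar\psi$ non-principal.
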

In order to prove this proposition, we first need some preliminary results.
\begin{lem}\label{GoodLFunctions} Let $q$ be  large and $\chi$ be a non-principal character modulo $q$. Let $2\leq T\leq q^2$ and $X\geq 2$. 
Let $\frac{1}{2} \leq \sigma_0 < 1$ and suppose that the
rectangle $\{ s:  \sigma_0 <\textup{Re}(s) \leq 1, \ \
|\textup{Im}(s)| \leq T+3\}$ does not contain any zeros of $L(s,\chi)$.
Then we have
$$
\log L(1, \chi)= -\sum_{p\leq X} \log \left(1-\frac{\chi(p)}{p}\right)+O\left(\frac{\log X}{T}+\frac{\log q}{(1-\sigma_0)T}+ \frac{\log q \log T}{(1-\sigma_0)^2} X^{(\sigma_0-1)/2}\right).
$$
\end{lem}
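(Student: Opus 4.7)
The strategy is a standard contour integral argument: express the truncated Euler product via Perron's formula, shift the contour into the zero-free region, and absorb the resulting integrals as error. I begin with
$$\log L(1+s,\chi)=\sum_{n\geq 2}\frac{\Lambda(n)\chi(n)}{n^{1+s}\log n},$$
valid absolutely for $\re(s)>0$. Taking abscissa $c=1/\log X$ and truncating at height $T$, the effective Perron formula yields
$$\sum_{n\leq X}\frac{\Lambda(n)\chi(n)}{n\log n}=\frac{1}{2\pi i}\int_{c-iT}^{c+iT}\log L(1+s,\chi)\,\frac{X^s}{s}\,ds+O\!\left(\frac{\log X}{T}\right),$$
using that $|a_n|\leq 1/n$ so that $X^c\cdot\log X/T\ll \log X/T$. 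The left side equals $-\sum_{p\leq X}\log(1-\chi(p)/p)$ up to an error $O(1/\log X)$ coming from prime powers $p^k>X$ with $p\leq X$ (a geometric-series estimate gives a contribution $\ll \pi(X)/X\ll 1/\log X$ per such $p$), and this discrepancy will be absorbed into the final error.

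I then shift the contour to $\re(s)=(\sigma_0-1)/2$. Since $\sigma_0<1$, this new abscissa lies strictly inside the region $\re(s)>\sigma_0-1$, which by hypothesis is free of zeros of $L(1+s,\chi)$; hence $\log L(1+s,\chi)$ is holomorphic throughout the rectangle being swept, with branch fixed by the Dirichlet series representation at $\re(s)=c$. The only singularity of the integrand inside this rectangle is the simple pole of $X^s/s$ at $s=0$, whose residue is $\log L(1,\chi)$. The residue theorem therefore gives
$$\sum_{n\leq X}\frac{\Lambda(n)\chi(n)}{n\log n}=\log L(1,\chi)+I_v+I_h+O\!\left(\frac{\log X}{T}\right),$$
where $I_v$ is the vertical integral at $\re(s)=(\sigma_0-1)/2$ and $I_h$ collects the two horizontal pieces at heights $\pm T$.

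The remaining task is to bound $I_v$ and $I_h$ using the classical estimate for $\log L$ in a zero-free region (see e.g.\ Theorem 11.4 of \cite{MVbook}): for $\sigma_0<\re(w)\leq 2$ and $|\im(w)|\leq T+2$,
$$|\log L(w,\chi)|\ll \frac{\log(q(|\im(w)|+3))}{(1-\sigma_0)(\re(w)-\sigma_0)}.$$
On the vertical line $\re(w)=(1+\sigma_0)/2$ this gives $|\log L(w,\chi)|\ll \log(q(|t|+3))/(1-\sigma_0)^2$, and combined with the standard inequality $\int_{-T}^{T}|(\sigma_0-1)/2+it|^{-1}\,dt\ll \log T$ (using $T\gg 1-\sigma_0$), this yields
$$I_v\ll \frac{X^{(\sigma_0-1)/2}(\log q)(\log T)}{(1-\sigma_0)^2}.$$
For $I_h$, the bound $|s|\geq T$ on each horizontal segment gives
$$I_h\ll \frac{\log(qT)}{(1-\sigma_0)T}\int_{(\sigma_0-1)/2}^{c}X^{\sigma}\,d\sigma\ll \frac{\log q}{(1-\sigma_0)T},$$
using $T\leq q^2$ to replace $\log(qT)$ by $\log q$. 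Assembling these estimates with the Perron error and the prime-power discrepancy proves the lemma. The only nontrivial input is the logarithmic estimate for $\log L$ in the zero-free region, which is a classical consequence of Hadamard factorization applied to $L'/L$, and it is precisely this estimate that produces the factor $(1-\sigma_0)^{-2}$ in the stated error term.
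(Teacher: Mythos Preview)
Your argument is essentially identical to the paper's: Perron at abscissa $1/\log X$, shift to $\re(s)=(\sigma_0-1)/2$, pick up the residue $\log L(1,\chi)$, and bound the three contour pieces using a pointwise estimate for $\log L$ in the zero-free rectangle. Two small slips to clean up. First, the reference ``Theorem 11.4 of \cite{MVbook}'' is Siegel's theorem, not the $\log L$ bound you need; the paper invokes Lemma~8.1 of \cite{GrSo0}, which gives the sharper estimate $|\log L(\sigma+it,\chi)|\ll (\log q)/(\sigma-\sigma_0)$ (no extra factor $(1-\sigma_0)^{-1}$). Second, your stated bound with the extra $(1-\sigma_0)^{-1}$ is inconsistent with your own horizontal estimate: with that bound the worst point $\re(w)=(1+\sigma_0)/2$ would produce $I_h\ll (\log q)/((1-\sigma_0)^2T)$, not $(\log q)/((1-\sigma_0)T)$. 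Using the correct bound from \cite{GrSo0} resolves this and matches the lemma's error term; the paper absorbs the extra $(1-\sigma_0)^{-1}$ on the vertical side via $\int_{-T}^{T}|s|^{-1}\,dt\ll \log(T/(1-\sigma_0))\ll (\log T)/(1-\sigma_0)$.
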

\begin{proof}
Let $\alpha=1/\log X$. Then it follows from Perron's formula that
\begin{equation}\label{Perron}
\begin{aligned}
&\frac{1}{2\pi i} \int_{\alpha-iT}^{\alpha+iT} \log L(1+s, \chi) \frac{X^s}{s} ds\\
& = \sum_{n\leq X} \frac{\Lambda(n)}{n\log n}\chi(n)+ O\left(\sum_{n=1}^{\infty} \frac{\Lambda(n)}{n^{1+\alpha}\log n}\min\left(1, \frac{1}{T\log|X/n|}\right)\right)\\
&= \sum_{n\leq X} \frac{\Lambda(n)}{n\log n}\chi(n)+ O\left(\frac{\log X}{T}+ \frac{1}{X}\right),
\end{aligned}
\end{equation}
by a standard estimation of the error term. Moreover, we observe that 
\begin{align*}
 \sum_{n\leq X} \frac{\Lambda(n)}{n\log n}\chi(n)&= -\sum_{p\leq X} \log \left(1-\frac{\chi(p)}{p}\right)+O\left(\sum_{k=2}^{\infty}\sum_{p^k>X} \frac{1}{k p^k}\right)\\
 &=-\sum_{p \leq X} \log\left(1-\frac{\chi(p)}{p}\right) + O\left(X^{-\frac{1}{2}}\right).
\end{align*}
We now move the contour in \eqref{Perron} to the line $\re(s)=\sigma_1-1$, where $\sigma_1=(1+\sigma_0)/2$. We encounter a simple pole at $s=0$ that leaves a residue of $\log L(1, \chi)$. Furthermore, it follows from Lemma 8.1 of \cite{GrSo0} that for $\sigma\geq \sigma_1$ and $|t|\leq T$ we have 
$$\log L(\sigma+it, \chi)\ll \frac{\log q}{\sigma-\sigma_0}\ll\frac{\log q}{1-\sigma_0} .$$Therefore, we deduce that
$$ \frac{1}{2\pi i} \int_{\alpha-iT}^{\alpha+iT} \log L(1+s, \chi) \frac{X^s}{s} ds= \log L(1, \chi)+\mathcal{E}, $$
where 
\begin{align*}
\mathcal{E}&= \frac{1}{2\pi i} \left(\int_{\alpha-iT}^{\sigma_1-1-iT}+\int_{\sigma_1-1-iT}^{\sigma_1-1+iT}+ \int_{\sigma_1-1+iT}^{\alpha+iT}\right)\log L(1+s, \chi) \frac{X^s}{s} ds\\
&\ll \frac{\log q}{(1-\sigma_0)T}+ \frac{\log q \log T}{(1-\sigma_0)^2} X^{(\sigma_0-1)/2}.
\end{align*}
Since $\sigma_0 \geq 1/2$, combining the above estimates completes the proof.
\end{proof}
\begin{lem}\label{InducedFixed}
Let $\xi\bmod q$ and $\psi \bmod m$ be primitive characters. Then, there is a unique primitive character $\chi$ such that $\chi\psi$ is induced by $\xi$ if $m \mid q$, and no such character exists if $m\nmid q$.
\end{lem}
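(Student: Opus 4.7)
The plan is to interpret ``$\chi\psi$ is induced by $\xi$'' as saying that $\chi\psi$ agrees with $\xi$ as a character modulo $q$ (so that, in particular, both $\chi$ and $\psi$ must be realizable as characters modulo $q$), then to construct the unique $\chi$ explicitly when $m\mid q$ and to derive a contradiction from a local analysis when $m\nmid q$.

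For the case $m\mid q$, I would first observe that $\psi$, being a primitive character modulo $m$ with $m\mid q$, lifts to a (generally imprimitive) character modulo $q$. Consequently, the formula $\tilde\chi(n) := \xi(n)\bar\psi(n)$ for $(n,q)=1$ (and $0$ otherwise) defines a Dirichlet character modulo $q$. Letting $\chi$ be the primitive character inducing $\tilde\chi$, direct verification gives $\chi(n)\psi(n) = \xi(n)$ on $(\mathbb{Z}/q\mathbb{Z})^*$, so $\chi\psi$ is induced by $\xi$. Uniqueness is immediate: any other primitive character $\chi'$ with the same property must satisfy $\chi'(n) = \xi(n)\bar\psi(n) = \chi(n)$ on a cofinite set of integers, and hence $\chi' = \chi$ as primitive characters.

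For the case $m\nmid q$, I would argue by contradiction, picking a prime $p$ with $v_p(m) > v_p(q)$ (which exists since $m\nmid q$). Using the local-global decomposition of primitive Dirichlet characters into their $p$-primary components, write $\chi = \prod_p \chi_p$, $\psi = \prod_p \psi_p$, and $\xi = \prod_p \xi_p$. The assumption that $\chi\psi$ is induced by $\xi$ as a character modulo $q$ forces $\chi_p$ to have conductor dividing $p^{v_p(q)}$ and the primitive character underlying $\chi_p\psi_p$ to coincide with $\xi_p$, of conductor $p^{v_p(q)}$. However, $\psi_p$ is primitive of conductor $p^{v_p(m)} > p^{v_p(q)}$; if $\chi_p\psi_p$ had conductor at most $p^{v_p(q)}$, then $\psi_p = \bar\chi_p\cdot(\chi_p\psi_p)$ would have conductor at most $p^{v_p(q)}$, contradicting the primitivity of $\psi_p$. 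This yields the required contradiction and shows that no such primitive $\chi$ can exist when $m\nmid q$.

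The main obstacle is the careful local computation in the non-existence case: one must use the fact that in a product of $p$-primary characters where one factor is primitive of strictly larger conductor than the other, the conductor of the product cannot drop below the conductor of the dominant factor. Getting the interpretation of ``induced by'' right is also crucial, since under a looser reading (for instance, taking $\chi\psi$ to live naturally on $\mathrm{lcm}(\mathrm{cond}(\chi),m)$) the claim in the $m\nmid q$ case would not hold; the restrictive reading demanded by the statement is precisely what forces $m\mid q$.
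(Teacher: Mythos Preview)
Your argument for the $m\mid q$ case is correct and matches the paper's construction: both take $\chi$ to be the primitive character underlying $\xi\bar\psi$. The paper spells this out prime-by-prime via the Chinese Remainder decomposition $\xi=\tilde\xi\,\xi_1\cdots\xi_k$, $\psi=\psi_1\cdots\psi_k$, while you do it in one global stroke and verify uniqueness by noting that two primitive characters agreeing on a cofinite set of integers must coincide; your version is the more economical of the two.

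In the $m\nmid q$ case, your argument and the paper's both hinge on a reading that is stronger than the usual one. The paper simply asserts $q=[\mathrm{cond}(\chi),m]$; your key step ``$\mathrm{cond}(\chi_p)\mid p^{v_p(q)}$'' likewise amounts to $\mathrm{cond}(\chi)\mid q$, and neither follows from the standard meaning of ``$\xi$ induces $\chi\psi$'' (namely, that $\xi$ is the primitive character underlying the product). Under that standard reading the non-existence claim is in fact false: take $\xi$ trivial (so $q=1$), $\psi$ any primitive character of conductor $m>1$, and $\chi=\bar\psi$. You are right to flag this in your final paragraph. Note also that under the restrictive interpretation you adopt (both $\chi$ and $\psi$ must lift to characters modulo $q$), the conclusion $m\mid q$ is immediate from the fact that $\psi$ lifts, so your local computation becomes redundant. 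In the paper's application (Proposition~\ref{BoundExcep}) only the \emph{uniqueness} part of the lemma is actually used, and that does hold under either reading.
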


\begin{proof}
Suppose that $\chi \psi$ is induced by $\xi$, where $\chi$ is a primitive character of conductor $\ell$. Then we must have $q=[\ell, m]$, and hence 
there is no such character $\chi$ if $m\nmid q$. 

Now, suppose that $m\mid q$, and let $m=p_1^{a_1}\cdots p_k^{a_k}$ be its prime factorization. We construct $\chi$ in this case as follows. Since $q=[\ell, m]$, then we have 
$q=q_0\cdot p_1^{b_1}\cdots p_k^{b_k}$ where $(q_0, m)=1$ and $b_j\geq a_j$ for all $1\leq j\leq k$, and $\ell=q_0\cdot p_1^{c_1}\cdots p_k^{c_k}$ where $c_j=b_j$ if $b_j>a_j$ and $0\leq c_j\leq a_j$ if $b_j=a_j$. 
Now, since $\xi$ is primitive then $\xi= \tilde{\xi}\cdot\xi_1\cdots\xi_k$ where $\tilde{\xi}$ is a primitive character modulo $q_0$ and $\xi_j$ is a primitive character modulo $p_j^{b_j}$ for $1\leq j\leq k$. Similarly, we have $\psi=\psi_1\cdots\psi_k$ and $\chi=\tilde{\chi}\cdot \chi_1\cdots\chi_k$ where $\tilde{\chi}$ is a primitive character modulo $q_0$ and $\psi_j, \chi_j$ are primitive characters modulo $p_j^{a_j}$ and $p_j^{c_j}$ respectively. Moreover, since $\xi$ induces $\chi\psi$ then we must have $\tilde{\chi}=\tilde{\xi}$, and $\xi_j$ induces $\chi_j\psi_j$ for all $1\leq j\leq k$. But this implies that $\chi_j(n)=\xi_j(n)\bar{\psi_j(n)}$ for all $n$ such that $p_j\nmid n$, and hence we deduce that there is only one choice for $\chi_j$ since it is  primitive. Since this holds for all $1\leq j\leq k$, the character $\chi$ is unique.
\end{proof}

\begin{proof}[Proof of Proposition \ref{BoundExcep}]
By Bombieri's classical zero-density estimate (see Theorem 20 of \cite{Bo}), we know that there are at most $N^{6(1-\sigma)}(\log N)^B$ primitive characters $\xi$ with conductor $q\leq N\log N$ and such that $L(s, \xi)$ has a zero in the rectangle $\{s: \sigma \leq \re(s) \leq 1, |\im(s)|\leq N\}$, where $B$ is an absolute constant. Let $\xi_1, \cdots, \xi_L$ be these characters with $\sigma=1-\varepsilon/20$. Then, it follows from the above argument that $L\ll N^{\varepsilon/2}$. 
 
Recall that if $\xi$ is a primitive character that induces $\tilde{\xi}$, then $L(s, \xi)$ and $L(s, \tilde{\xi})$ have the same zeros in the half-plane $\re(s)>0$.   For a primitive character $\psi$ modulo $m$, let $\mc{E}_{\psi}$ denote the set of primitive characters $\chi$ modulo $q$ with $q\leq N$ and such that $\chi \bar{\psi}$ is induced by one of the characters $\xi_j$ for $1\leq j\leq L$. Let $\mathcal{E}_m$ be the union over all primitive characters $\psi$ modulo $m$ of the sets $\mathcal{E}_{\psi}$. Then, it follows from Lemma \ref{InducedFixed} that
$$
\left|\mc{E}_m\right| \leq \sum_{\psi \bmod m \atop \psi \textup { primitive}} |\mc{E}_{\psi}| \leq L\phi(m) \ll N^{\varepsilon}.
$$
Let $X=(\log N)^{A}$ where $A=100/\varepsilon$.  If $\chi$ is a primitive character with conductor $q\leq N$ and such that $\chi\notin\mc{E}_m$ then it follows from Lemma \ref{GoodLFunctions} with $T=X$ that for all primitive characters $\psi$ modulo $m$ we have
$$
\log L(1,\chi\bar{\psi})= -\sum_{p\leq X} \log\left(1-\frac{\chi(p)\bar{\psi}(p)}{p}\right) + O\left(\frac{1}{\log N}\right),
$$
which implies \eqref{ShortApproxL1chi}.
Finally, if we assume GRH, then this estimate is valid for all primitive characters $\chi$ modulo $q\leq N$ and $\psi$ modulo $m$ with $X=(\log N)^{10}$ by Lemma \ref{GoodLFunctions}.
\end{proof}
We can now prove Theorem \ref{GL}.
\begin{proof}[Proof of Theorem \ref{GL}]
Combining Propositions \ref{CharSumL1} and \ref{BoundExcep} we deduce that for all but at most $N^{\varepsilon}$ primitive characters $\chi$ modulo $q$ with $N^{\varepsilon/3} \leq q \leq N$ we have
\begin{equation}\label{LowerBoundMCHI}
M(\chi) + \sqrt{q}  \gg \frac{\sqrt{qm}}{\phi(m)} \prod_{p \leq \log^A N} \left(1-\frac{\chi(p)\bar{\psi(p)}}{p}\right)^{-1} 
\end{equation}
with $A=100/\varepsilon$. The first part of the theorem follows, upon noting that
$$ \prod_{p \leq \log^A N} \left(1-\frac{\chi(p)\bar{\psi(p)}}{p}\right)^{-1}  \gg_{\varepsilon} (\log_2 q) \cdot \exp\big(-\mb{D}(\chi,\psi;\log q)^2\big).
$$
The second part follows along the same lines, since if we assume GRH then \eqref{LowerBoundMCHI} holds with $A=10$ for all primitive characters $\chi$ with conductor $q\leq N$.
\end{proof}

\section{Estimates for the distance $\mb{D}(\chi,\psi;y)$: Proofs of Proposition \ref{UPPER} and Theorem \ref{MCHILOW}}
We shall first prove a lower bound for $\mb{D}(\chi,\psi;y)^2$, which is a refined version of \eqref{LOWERBOUDISTANCE}, that shows that Proposition  \ref{UPPER} is best possible. This will also be the main ingredient in the proof of Proposition \ref{MINDIST}. 

\begin{pro} \label{UPPER2}
Let $g\geq 3$ be a fixed odd integer, and $\varepsilon>0$ be small. Let $\psi$ be an odd primitive character of conductor $m$ and order $k$, and $y$ be such that $m\leq (\log y)^{4/7}$. Put $k^{\ast}=k/(g,k)$.  Then, for any primitive character $\chi \pmod q$ of order $g$ we have
$$
\mb{D}(\chi,\psi;y)^2\geq \left(1-(1-\delta_g)\frac{\pi/gk^{\ast}}{\tan(\pi/gk^{\ast})}\right) \log_2y-\beta \varepsilon \log m+ O\left(\log_2 m\right),
$$
where $\beta=0$ if $m$ is a non-exceptional modulus, and $\beta=1$ if $m$ is exceptional.
\end{pro}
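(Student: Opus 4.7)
I would refine the Granville--Soundararajan--Goldmakher lower bound for $\mb{D}(\chi,\psi;y)^2$ by sharpening the underlying trigonometric inequality on $L$-th roots of unity, where $L = gk^*$. By Mertens' theorem I would first rewrite
\[
\mb{D}(\chi,\psi;y)^2 \;=\; \log_2 y \;-\; \re\!\sum_{\substack{p\leq y\\ p\nmid qm}}\frac{\chi\bar\psi(p)}{p} \;+\; O(\log_2 m),
\]
reducing the problem to an upper bound of shape $(1-A)\log_2 y + \beta\varepsilon\log m + O(\log_2 m)$ for the real part of the truncated series, where $A=1-(1-\delta_g)\tfrac{\pi/L}{\tan(\pi/L)}$. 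The crucial structural input is that for $p\nmid qm$ the value $\chi\bar\psi(p)$ lies in $\mu_L$ (the group of $L$-th roots of unity), and, since $\chi^g$ is principal, that the powers $(\chi\bar\psi)^{gj} = \bar\psi^{gj}$ are Dirichlet characters of conductor dividing $m$.

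\textbf{The extremal trigonometric inequality.} The heart of the argument is to establish, for every $\omega = e^{2\pi i r/L}\in\mu_L$, a pointwise inequality
\[
\re(\omega) \;\leq\; (1-A) \;+\; \sum_{j=1}^{k^*-1} c_j\,\re(\omega^{gj})
\]
with non-negative coefficients $c_j$ and the claimed constant $A$. Since the right-hand side depends only on $r \bmod k^*$, this reduces to the $k^*$ constraints
\[
\cos\!\Bigl(\tfrac{2\pi\min(r,\,k^*-r)}{L}\Bigr) \;\leq\; (1-A) \;+\; \sum_{j=1}^{k^*-1} c_j\cos\!\Bigl(\tfrac{2\pi j r}{k^*}\Bigr),\qquad r=0,1,\dots,k^*-1.
\]
I would set up the linear program that maximizes $A$ subject to these constraints and $c_j\geq 0$, and verify that the optimal value is exactly the claimed $A$; the extremizer is a Fej\'er-type trigonometric polynomial adapted to $L$-th roots of unity, with equality attained at the residues $r$ corresponding to the closest $g$-th root of unity to each $k$-th root of unity. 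Two properties are critical: only powers $\omega^{gj}$ appear on the right-hand side, so upon substituting $\omega = \chi\bar\psi(p)$ we encounter only characters of conductor dividing $m$; and the $c_j$ are non-negative, which preserves the direction of the inequality after summing over primes.

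\textbf{Summation and $L$-value estimates.} Applying the inequality at $\omega = \chi\bar\psi(p)$ and summing yields
\[
\re\!\sum_{\substack{p\leq y\\ p\nmid qm}}\!\frac{\chi\bar\psi(p)}{p} \;\leq\; (1-A)\log_2 y \;+\; \sum_{j=1}^{k^*-1} c_j\,\re\!\sum_{\substack{p\leq y\\ p\nmid m}}\!\frac{\bar\psi^{gj}(p)}{p} \;+\; O(\log_2 m).
\]
Each inner sum is over a Dirichlet character of conductor at most $m\leq (\log y)^{4/7}$, so by the standard Perron-type approximation (controlled by the classical zero-free region of $L(s,\bar\psi^{gj})$) combined with the bound $|L(1,\bar\psi^{gj})|\ll\log m$, each such real part is at most $\log_2 m + O(1)$ in the non-exceptional case. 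In the exceptional case, the weaker zero-free region forces us to invoke Siegel's theorem, which costs at most an extra $\varepsilon\log m$ and accounts precisely for the $\beta\varepsilon\log m$ term in the statement. Combining everything and inserting back into the Mertens identity produces the asserted lower bound.

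\textbf{Main obstacle.} The hardest step is Step~2: identifying the correct extremal polynomial and verifying that the optimal constant of the linear program is exactly $1-(1-\delta_g)(\pi/L)/\tan(\pi/L)$. The explicit form of this constant, together with the fact that it matches the achievability statement of Proposition \ref{UPPER} (where the optimal $\chi$ assigns to each $p$ the closest $g$-th root of unity to $\psi(p)$), strongly suggests the right extremizer, and the linear program above is the natural framework for confirming it rigorously.
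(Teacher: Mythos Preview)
Your route is genuinely different from the paper's. The paper's proof is short and direct: it uses the trivial pointwise bound $\re(\chi(p)\bar\psi(p))\leq \max_{z\in\mu_g\cup\{0\}}\re(z\,\bar\psi(p))$, groups primes by the value $\psi(p)=e(\ell/k)$, applies a uniform Mertens estimate in arithmetic progressions (Languasco--Zaccagnini) to each class, and then invokes Goldmakher's identity (Lemma~\ref{MAX}) for the average $\tfrac{1}{k}\sum_\ell\max_{z}\re(z\,e(-\ell/k))$, which produces exactly the constant $(1-\delta_g)\tfrac{\pi/gk^*}{\tan(\pi/gk^*)}$. The $\beta\varepsilon\log m$ loss comes from a crude bound on $\sum_a|C_m(a)|$ (the Mertens constants), where Siegel's theorem enters. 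No linear program and no Fourier expansion of an extremal function are needed.

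Your dual approach---Fourier-expand $H(\eta):=\max_{z\in\mu_g}\re(z\eta)$ on $\mu_k$ into the characters $\eta\mapsto\eta^{gj}$ and reduce to partial sums of $L(1,\bar\psi^{gj})$---can be made to work and is conceptually nice, but it has a concrete gap as written. The nonnegativity $c_j\geq 0$ is false in general: for $g=5$, $k=6$ one computes $H=(1,\cos\tfrac{\pi}{15},\cos\tfrac{2\pi}{15},\cos\tfrac{\pi}{5},\cos\tfrac{2\pi}{15},\cos\tfrac{\pi}{15})$ on $\mu_6$, and the Fourier coefficient $c_2=\tfrac{1}{6}\bigl(1-\cos\tfrac{\pi}{15}-\cos\tfrac{2\pi}{15}+\cos\tfrac{\pi}{5}\bigr)<0$. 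Since the optimal constant $1-A$ equals the \emph{average} of $H$ (attained only by the exact Fourier expansion), imposing $c_j\geq 0$ in your LP forces a strictly worse $A$ than the one in the statement. The fix is to drop the sign constraint and instead prove $\sum_j|c_j|=O(1)$ uniformly in $k$; this holds because $H$ is the restriction to $\mu_k$ of the piecewise-smooth $1/g$-periodic function $G(\theta)=\max_{z\in\mu_g}\re(z\,e(\theta))$, whose continuous Fourier coefficients satisfy $|\hat G(gm)|\asymp 1/m^2$ and are therefore absolutely summable. Once some $c_j$ are negative you also need a \emph{lower} bound on $\re\sum_{p\leq y}\bar\psi^{gj}(p)/p$, and that is precisely where Siegel's $L(1,\chi_m)\gg_\varepsilon m^{-\varepsilon}$ enters---so your exceptional-case accounting is essentially right, but for a slightly different reason than the ``weaker zero-free region'' you cite.
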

We say that $m\geq 1$ is an {\it exceptional} modulus if there exists a Dirichlet character $\chi_m$ and a complex number $s$ such that  $L(s,\chi_m)=0$ and 
\begin{equation}\label{EXCEPTIONALMODULUS}
\text{Re}(s)\geq 1-\frac{c}{\log (m(\text{Im}(s)+2))}
\end{equation} 
for some sufficiently small constant $c>0$. 
One expects that there are no such moduli, but what is known unconditionally is that if $m$ is exceptional, then there is only one {\it exceptional} character $\chi_m$ modulo $m$, which is quadratic, and for which $L(s,\chi_m)$ has a unique zero in the region \eqref{EXCEPTIONALMODULUS} which is real and simple (this zero is called a Siegel zero).

For $g \geq 3$, we let $\mu_g$ denote the set of $g$-th roots of unity. Then, we observe that
\begin{align}
\mb{D}(\chi,\psi;y)^2 &= \log\log y- \sum_{p \leq y} \frac{\text{Re}(\chi(p)\bar{\psi}(p))}{p} +O(1)  \nonumber\\
&\geq  \log \log  y - \sum_{\ell\bmod k} \max_{z \in \mu_g \cup \{0\}} \text{Re}\left(z \cdot e\left(-\frac{\ell}{k}\right)\right)\sum_{p \leq y \atop \psi(p)= e\left(\frac{\ell}{k}\right)} \frac{1}{p} +O(1).\label{DIST}
\end{align}
Proposition \ref{UPPER2} follows from this inequality together with Proposition \ref{PreciseSumMax} below, which provides an asymptotic formula for the sum on the right hand side of \eqref{DIST}. To establish Proposition \ref{UPPER}, we need an additional ingredient, namely that there exist many primitive characters $\chi$ whose values we can control at the small primes $p\leq c \log q$ so that the inequality in \eqref{DIST} is sharp for $y\leq c\log q$. This is proven in Lemma \ref{VEC} below. 
\begin{pro}\label{PreciseSumMax}
Let $g\geq 3$ be a fixed odd integer, and $\varepsilon>0$ be small. Let $\psi$ be an odd primitive character of conductor $m$ and order $k$, and $y$ be such that $m\leq (\log y)^{4/7}$. Put $k^{\ast}=k/(g,k)$. Then 
\begin{equation}\label{AsymptoticMaxSumRE}
\begin{aligned}
&\sum_{\ell\bmod k} \max_{z \in \mu_g \cup \{0\}} \textup{Re}\left(z \cdot e\left(-\frac{\ell}{k}\right)\right)\sum_{p \leq y \atop \psi(p)= e\left(\frac{\ell}{k}\right)} \frac{1}{p} \\
&= (1-\delta_g)\frac{\pi/gk^{\ast}}{\tan(\pi/gk^{\ast})}\log_2 y + \theta \varepsilon \log m+ O\left(\log_2 m\right),
\end{aligned}
\end{equation}
where $\theta=0$ if $m$ in a non-exceptional modulus, and $|\theta|\leq 1$ if $m$ is exceptional. 
\end{pro}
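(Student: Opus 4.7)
My approach converts the sum into a combination of Dirichlet series evaluated at $1$, via orthogonality on the cyclic group of order $k^{\ast}$. First, I identify the coefficient. Since $g$ is odd and $\psi$ is odd, the order $k$ of $\psi$ is even; setting $d=\gcd(g,k)$ (odd), $g=g^{\ast}d$, $k=k^{\ast}d$ gives $k^{\ast}$ even and $\gcd(g^{\ast},k^{\ast})=1$. For $z=e(a/g)\in\mu_g$, $\mathrm{Re}(z\,e(-\ell/k))=\cos(2\pi(a/g-\ell/k))$, and the nearest $a/g$ to $\ell/k$ modulo $1$ lies at distance $\|g\ell/k\|/g=\|g^{\ast}\ell/k^{\ast}\|/g\le 1/(2g)$; since $g\ge 3$ forces $\cos(\pi/g)>0$, the optimum is never attained at $z=0$, so
$$\max_{z\in\mu_g\cup\{0\}}\mathrm{Re}(z\,e(-\ell/k))=\cos\bigl(2\pi\|g^{\ast}\ell/k^{\ast}\|/g\bigr).$$
This coefficient depends on $\ell$ only through $j:=g^{\ast}\ell\bmod k^{\ast}$. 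Grouping $\ell$'s accordingly and using $\psi(p)=e(\ell/k)\Rightarrow\psi^g(p)=e(j/k^{\ast})$, the sum in \eqref{AsymptoticMaxSumRE} becomes
$$S(y)=\sum_{j\bmod k^{\ast}}c_j\sum_{\substack{p\le y\\ \psi^g(p)=e(j/k^{\ast})}}\frac1p,\quad c_j:=\cos\bigl(2\pi\|j/k^{\ast}\|/g\bigr),$$
where $\psi^g$ is a character of order exactly $k^{\ast}$ modulo $m$.

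Next I apply orthogonality in $\mathbb{Z}/k^{\ast}\mathbb{Z}$ to obtain
$$S(y)=\frac{1}{k^{\ast}}\sum_{r\bmod k^{\ast}}\widehat{c}(-r)\sum_{p\le y}\frac{\psi^{gr}(p)}{p},\quad\widehat{c}(r):=\sum_{j=0}^{k^{\ast}-1}c_j\,e(jr/k^{\ast}).$$
The $r=0$ term contributes, via Mertens' theorem, $(\widehat{c}(0)/k^{\ast})(\log_2 y+O(\log_2 m))$. Pairing $j$ with $k^{\ast}-j$ and summing the resulting geometric series yields the Dirichlet-kernel identity
$$\widehat{c}(0)=\sum_{j=0}^{k^{\ast}-1}\cos\bigl(2\pi\|j/k^{\ast}\|/g\bigr)=\frac{\sin(\pi/g)\cos(\pi/(gk^{\ast}))}{\sin(\pi/(gk^{\ast}))},$$
which, together with $1-\delta_g=g\sin(\pi/g)/\pi$, recovers exactly the desired main term $(1-\delta_g)\,(\pi/gk^{\ast})/\tan(\pi/gk^{\ast})\,\log_2 y$.

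For $r\not\equiv 0\pmod{k^{\ast}}$ the character $\psi^{gr}$ is non-trivial modulo $m$. Under $m\le(\log y)^{4/7}$, Lemma \ref{GoodLFunctions}, applied with the classical zero-free region $\sigma_0=1-c/\log m$ valid for a non-exceptional $\psi^{gr}$, yields
$$\sum_{p\le y}\frac{\psi^{gr}(p)}{p}=\log L(1,\psi^{gr})+O(1),$$
and unconditionally $|\log L(1,\eta)|\ll\log_2 m$ for any non-exceptional $\eta$ modulo $m$. The crucial input for summing these is a Fourier-decay bound on $\widehat{c}(r)$: writing $c_j=f(j/k^{\ast})$ with $f(x)=\cos(2\pi\|x\|/g)$ continuous on $\mathbb{R}/\mathbb{Z}$ and $C^1$ off the single point $x=1/2$, integration by parts gives $\hat{f}(r)\ll 1/r^2$, and Poisson summation then produces $|\widehat{c}(r)|\ll k^{\ast}/r^2$ for $1\le|r|\le k^{\ast}/2$ (the complementary range is handled by conjugate symmetry). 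Hence $(k^{\ast})^{-1}\sum_{r\ne 0}|\widehat{c}(r)|\ll 1$, and the aggregate non-exceptional error is $O(\log_2 m)$. If $m$ is exceptional, the unique $r_0$ with $\psi^{gr_0}=\chi_m$ contributes $(\widehat{c}(-r_0)/k^{\ast})\log L(1,\chi_m)$, controlled by $|\widehat{c}(-r_0)/k^{\ast}|\le 1$ and Siegel's bound $\log L(1,\chi_m)\ge-\varepsilon\log m$, producing the $\theta\varepsilon\log m$ term with $|\theta|\le 1$.

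The hardest step is keeping the off-diagonal error uniform in $k^{\ast}$: since $k^{\ast}$ can be comparable to $m\le(\log y)^{4/7}$, a term-by-term bound $(k^{\ast}-1)\cdot O(\log_2 m)$ would swamp the main term. The Fourier-decay estimate $|\widehat{c}(r)|\ll k^{\ast}/r^2$, exploiting the regularity of the coefficient function $\cos(2\pi\|x\|/g)$, is precisely what compresses the off-diagonal sum into the clean $O(\log_2 m)$ error appearing in \eqref{AsymptoticMaxSumRE}.
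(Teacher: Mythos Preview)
Your approach is correct in outline and takes a genuinely different route from the paper's. The paper works directly with primes in residue classes modulo $m$: it applies the Languasco--Zaccagnini Mertens formula (Lemma~\ref{LOGLZ}) to write $\sum_{p\le y,\,\psi(p)=e(\ell/k)}1/p=(\log_2 y)/k-\sum_{\psi(a)=e(\ell/k)}C_m(a)+o(1)$, extracts the main term via Lemma~\ref{MAX}, and controls the remainder by the triangle inequality together with the $\ell^1$-bound $\sum_{(a,m)=1}|C_m(a)|\le\beta\varepsilon\log m+O(\log_2 m)$ of Lemma~\ref{AverageMertens}. No smoothness of the coefficients $M_\ell=\max_z\text{Re}(ze(-\ell/k))$ is used there, only $|M_\ell|\le 1$. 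Your argument instead exploits that smoothness: after passing to $\psi^g$, orthogonality converts the problem into a weighted sum of the quantities $\sum_{p\le y}\psi^{gr}(p)/p$, and the Fourier decay $|\widehat c(r)|\ll k^\ast/r^2$ (coming from the piecewise-$C^1$ structure of $x\mapsto\cos(2\pi\|x\|/g)$ on $\mb{R}/\mb{Z}$) is precisely what keeps the off-diagonal absolutely summable. Both routes reach the same error; yours is more structural, the paper's more robust and a bit shorter.

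One point needs tightening. You require $\sum_{p\le y}\eta(p)/p=\log L(1,\eta)+O(1)$ for each non-principal $\eta=\psi^{gr}$, uniformly for $m\le(\log y)^{4/7}$. Lemma~\ref{GoodLFunctions} as stated carries the restriction $T\le m^2$, so with $X=y$ the error term $(\log y)/T$ is not $O(1)$ when $m$ is small; and for the exceptional $\eta=\chi_m$ there is no admissible zero-free rectangle at all, so that lemma says nothing --- yet you still write the exceptional contribution as $(\widehat c(-r_0)/k^\ast)\log L(1,\chi_m)$. The remedy is to replace this appeal by an unconditional input: either Siegel--Walfisz with partial summation, or (as the paper in effect does) Lemma~\ref{LOGLZ}, from which one reads off $\sum_{p\le y}\eta(p)/p=\log L(1,\eta)+O(1)$ for \emph{every} non-principal $\eta\pmod m$ in this range of $m$, exceptional or not. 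With that adjustment your Siegel-bound treatment of the exceptional term goes through exactly as written.
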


We first record the following lemma, which is a special case of Lemma 8.3 of \cite{GOLD}. 
\begin{lem} \label{MAX}
Let $g,k$ and $k^{\ast}$ be as in Proposition \ref{PreciseSumMax}. Then
\begin{equation*}
\frac{1}{k}\sum_{\ell\bmod k}\max_{z \in \mu_g \cup \{0\}} \textup{Re}\left(z \cdot e\left(-\frac{\ell}{k}\right)\right) = (1-\delta_g)\frac{\pi/gk^{\ast}}{\tan(\pi/gk^{\ast})}. 
\end{equation*}
\end{lem}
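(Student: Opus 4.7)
The plan is to evaluate the inner maximum in closed form, reduce the outer sum to a single trigonometric series over a cyclic group, and then collapse that series with a Dirichlet-kernel identity.

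For fixed $\ell$, every $z\in\mu_g$ has the form $z=e(j/g)$, so
$$\max_{z\in\mu_g}\re(z\,e(-\ell/k)) = \max_{0\le j<g}\cos\!\bigl(2\pi(\tfrac{j}{g}-\tfrac{\ell}{k})\bigr) = \cos(2\pi d_\ell),$$
where $d_\ell$ is the distance from $\ell/k$ to the nearest multiple of $1/g$. Since $d_\ell\le 1/(2g)\le 1/6$ when $g\ge 3$, this maximum is $\ge \cos(\pi/g)>0$, so the element $z=0$ is never optimal and can be discarded.

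Next, because $\psi$ is odd its order $k$ is even, and since $g$ is odd $d:=(g,k)$ is odd, forcing $k^*=k/d$ to be even. The map $\ell\mapsto g\ell\bmod k$ is a homomorphism on $\mathbb{Z}/k\mathbb{Z}$ with image $d\mathbb{Z}/k\mathbb{Z}=\{0,d,\ldots,(k^*-1)d\}$, each value attained exactly $d$ times. Writing $g\ell\equiv jd\pmod{k}$ with $j\in\{0,\ldots,k^*-1\}$, a direct computation gives $\ell/k\equiv j/(gk^*)\pmod{1/g}$, hence $d_\ell=\min(j,k^*-j)/(gk^*)$. Thus
\begin{equation*}
\sum_{\ell\bmod k}\max_{z\in\mu_g\cup\{0\}}\re(z\,e(-\ell/k))= d\sum_{j=0}^{k^*-1}\cos\!\Bigl(\tfrac{2\pi\min(j,k^*-j)}{gk^*}\Bigr).
\end{equation*}

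Setting $k^*=2m$ and $\alpha:=2\pi/(gk^*)$, I would re-index the inner sum via $j_0$ with $|j_0|=\min(j,k^*-j)$, turning it into $\sum_{j_0=-(m-1)}^{m}\cos(\alpha j_0)$. Isolating the endpoint $j_0=m$ and applying the Dirichlet-kernel identity to the symmetric remainder gives $\sum_{j_0=-(m-1)}^{m-1}\cos(\alpha j_0)=\sin((m-\tfrac12)\alpha)/\sin(\alpha/2)$; combining this with $\cos(m\alpha)$ and applying $\sin A+\sin B=2\sin(\tfrac{A+B}{2})\cos(\tfrac{A-B}{2})$ collapses the numerator to $\sin(m\alpha)\cos(\alpha/2)$. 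Substituting $m\alpha=\pi/g$ and $\alpha/2=\pi/(gk^*)$, then dividing by $k=dk^*$ and using $\sin(\pi/g)=\pi(1-\delta_g)/g$, yields exactly the claimed value $(1-\delta_g)\,\dfrac{\pi/(gk^*)}{\tan(\pi/(gk^*))}$.

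The only genuinely delicate point is the parity observation that $k^*$ must be even under the standing hypotheses; without it, the Dirichlet-kernel reduction produces an asymmetric tail and one fails to obtain a clean $\cot(\pi/(gk^*))$ on the nose. Everything else is routine trigonometric bookkeeping.
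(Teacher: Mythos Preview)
Your argument is correct. The evaluation of the inner maximum as $\cos(2\pi d_\ell)$, the parity observation that $k^\ast$ is even (since $d=(g,k)$ divides the odd integer $g$ while $k$ is even), the reduction of the $\ell$-sum to a sum over $j\in\{0,\dots,k^\ast-1\}$ via the homomorphism $\ell\mapsto g\ell\bmod k$, and the final Dirichlet-kernel collapse all check out; in particular the identity $\sum_{j=0}^{k^\ast-1}\cos\bigl(\alpha\min(j,k^\ast-j)\bigr)=\sin(\pi/g)/\tan(\pi/(gk^\ast))$ is exactly right once one substitutes $m\alpha=\pi/g$ and $\alpha/2=\pi/(gk^\ast)$.

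The paper's proof is different in form: it simply invokes the more general identity of Goldmakher (stated here as Lemma~\ref{MAXGold}), namely
\[
\frac{1}{k}\sum_{\ell\bmod k}\max_{z\in\mu_g\cup\{0\}}\re\bigl(z\,e(\theta-\ell/k)\bigr)=\frac{\sin(\pi/g)}{k^\ast\tan(\pi/gk^\ast)}\,F_{gk^\ast}(-gk^\ast\theta),
\]
and specializes to $\theta=0$, where $F_{gk^\ast}(0)=1$. What you have done is give a self-contained proof of this special case, essentially re-deriving the $\theta=0$ instance of Goldmakher's lemma from first principles. Your route has the virtue of being elementary and not relying on an external reference; the paper's route is shorter because the general $F_{gk^\ast}$ identity is needed elsewhere anyway (in the proof of Proposition~\ref{MEDIUMPRIMES}), so citing it costs nothing.
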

\begin{proof}
This is Lemma 8.4 of \cite{GOLD} (see also Lemma \ref{MAXGold} below) with $\theta=0$.
\end{proof}

In view of this lemma, our next task is to estimate the inner sum in the left hand side of \eqref{AsymptoticMaxSumRE}. Since $\psi$ is periodic modulo $m$ we have
\begin{equation} \label{ALLAS}
\sum_{p \leq y \atop \psi(p) = e\left(\frac{\ell}{k}\right)}\frac{1}{p} = \sum_{a \bmod m \atop \psi(a) = e\left(\frac{\ell}{k}\right)} \sum_{p \leq y \atop p\equiv a \bmod m} \frac{1}{p}.
\end{equation}


In what follows we shall need estimates of Mertens type for sums of reciprocals of primes from specific arithmetic progressions $a$ modulo $m$ that are uniform in a range of the modulus $m$. Results of this type were established by Languasco and Zaccagnini \cite{LaZ}.  
\begin{lem} [Theorem 2 and Corollary 3 of \cite{LaZ}] \label{LOGLZ}
Let $x \geq 3$.  Then, uniformly in $m \leq \log x$ and reduced residue classes $a$ modulo $m$, we have
$$
-\sum_{p \leq x \atop p \equiv a \bmod m} \log\left(1-\frac{1}{p}\right) = \frac{1}{\phi(m)} \log_2 x -  C_m(a) + O\left(\frac{(\log\log x)^{16/5}}{(\log x)^{3/5}}\right),
$$
where $C_m(a)$ is defined in \eqref{MERTENSCONSTANT} below.
\end{lem}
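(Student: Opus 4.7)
The plan is to reduce the stated formula to a uniform asymptotic for $\sum_{p \leq x,\, p\equiv a\bmod m}1/p$, and then use character orthogonality together with uniform estimates for Dirichlet $L$-functions of modulus at most $\log x$. As a first step I would expand $-\log(1 - 1/p) = 1/p + \sum_{k\geq 2}1/(kp^k)$ and split the left hand side accordingly. The tail $\sum_{p\equiv a\,(m)}\sum_{k\geq 2}1/(kp^k)$ converges absolutely and, up to an $O(1/x)$ error, equals its completion to infinity, which can be absorbed into $C_m(a)$. Thus it suffices to establish
\begin{equation*}
\sum_{\substack{p \leq x\\ p\equiv a\,(m)}}\frac{1}{p}
= \frac{1}{\phi(m)}\log_2 x + c_m(a) + O\!\left(\frac{(\log\log x)^{16/5}}{(\log x)^{3/5}}\right),
\end{equation*}
uniformly in $m \leq \log x$ and reduced $a\bmod m$, for a suitable constant $c_m(a)$.

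Second, I would invoke orthogonality of Dirichlet characters to write
\begin{equation*}
\sum_{\substack{p \leq x\\ p\equiv a\,(m)}}\frac{1}{p}
= \frac{1}{\phi(m)}\sum_{\chi\bmod m}\bar\chi(a)\sum_{p\leq x}\frac{\chi(p)}{p}.
\end{equation*}
For the principal character $\chi_0$, the classical Mertens theorem supplies $\sum_{p\leq x,(p,m)=1}1/p = \log_2 x + M - \sum_{p\mid m}1/p + O(1/\log x)$, which accounts for the main term $(\log_2 x)/\phi(m)$. For each non-principal $\chi$, I would apply Perron's formula to $\sum_n \Lambda(n)\chi(n)/(n\log n)$, shift the contour past $s=1$, collect the residue $\log L(1,\chi)$, and bound the remaining line integral using the available zero-free region for $L(s,\chi)$; this mirrors the argument already used in the proof of Lemma \ref{GoodLFunctions}, but now with quantitative uniformity in the modulus $m$. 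Summing the resulting expressions against $\bar\chi(a)/\phi(m)$ gives the asymptotic with the constant $c_m(a)$ built from $M$, the primes dividing $m$, and a weighted sum of $\log L(1,\chi)$.

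The main obstacle is securing the precise error exponent $(\log\log x)^{16/5}/(\log x)^{3/5}$, which reflects the deepest available zero-free region uniformly in both the $q$- and $t$-aspects. Concretely, one needs a Korobov--Vinogradov style region of the shape
\begin{equation*}
\sigma \geq 1 - \frac{c}{(\log m)^{2/3}(\log\log m)^{1/3} + (\log(|t|+3))^{2/3}(\log\log(|t|+3))^{1/3}},
\end{equation*}
valid for all $L(s,\chi)$ with conductor at most $\log x$. Once this is optimised against a parameter $T$ in Perron's formula (analogously to the proof of Lemma \ref{GoodLFunctions}, but with a sharper zero-free region replacing the classical one), the error $(\log\log x)^{16/5}/(\log x)^{3/5}$ falls out. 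The only residual difficulty is the possible Siegel zero of a quadratic $\chi\bmod m$; however, for $m \leq \log x$ its contribution is controlled by Siegel's theorem and is dominated by the stated error. With these ingredients the formula, and hence the definition of $C_m(a)$ matching that of Languasco--Zaccagnini, follow by routine bookkeeping.
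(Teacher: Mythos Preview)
The paper does not give its own proof of this lemma; it is simply quoted from Languasco--Zaccagnini, as the attribution in the statement indicates. So there is no ``paper's proof'' to compare against.

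That said, your sketch is essentially how Languasco--Zaccagnini themselves proceed: expand the logarithm, reduce to $\sum_{p\leq x,\,p\equiv a\,(m)}1/p$, use orthogonality of characters, handle the principal character via Mertens, and for each non-principal $\chi$ estimate $\sum_{p\leq x}\chi(p)/p$ by contour integration (Perron's formula) into the Korobov--Vinogradov zero-free region, with the resulting constants assembling into $C_m(a)$ exactly as in \eqref{MERTENSCONSTANT}. The exponents $3/5$ and $16/5$ do indeed arise from optimizing the Perron truncation against the Korobov--Vinogradov region, and your remark that for $m\leq\log x$ a possible Siegel zero is rendered harmless by Siegel's theorem (at the cost of an ineffective implied constant) is correct. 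Your proposal is sound and matches the approach of the cited reference.
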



We shall refer to $C_m(a)$ as the \emph{Mertens constant} of the residue class $a$ modulo $m$. Given $m \geq 2$ and $(a, m)=1$, this quantity is defined by
\begin{equation}\label{MERTENSCONSTANT}
C_m(a):= \frac{1}{\phi(m)}\sum_{\chi\neq \chi_0 \bmod m} \overline{\chi}(a) \cdot \log \frac{K(1, \chi)}{L(1, \chi)}-\frac{1}{\phi(m)}\left(\gamma + \log(\phi(m)/m)\right),
\end{equation}
where, for each non-principal character $\chi$ modulo $q$,
$$K(s, \chi):= \sum_{n=1}^{\infty} \frac{k_{\chi}(n)}{n^s} $$ 
is an absolutely convergent Dirichlet series for $\re(s)>0$, and $k_{\chi}(n)$ is a completely multiplicative function defined as
\begin{equation}\label{KChi}
k_{\chi}(p):= p\left(1-\left(1-\frac{\chi(p)}{p}\right)\left(1-\frac 1p\right)^{-\chi(p)}\right).
\end{equation}
Moreover, it follows from the definition of $k_{\chi}(p)$ and Taylor expansion that
\begin{equation}\label{BoundK}
|k_{\chi}(p)|\ll \frac{1}{p}.
\end{equation}

In order to study the asymptotic behaviour of the sum in \eqref{ALLAS}, it will be crucial to have an upper bound for the average of $|C_m(a)|$. 
\begin{lem}\label{AverageMertens}
Fix $\varepsilon>0$, and let $m\geq 3$. Then, we have
$$
\sum_{\substack{a \bmod m\\ (a,m)=1}} \left|C_m(a)\right| \leq  \begin{cases} O(\log_2m), & \text{ if } m \text{ is a non-exceptional modulus},\\
\varepsilon \log m + O(\log_2m), &  \text{ if } m \text{ is exceptional}.\end{cases}
$$
\end{lem}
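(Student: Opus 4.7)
The plan is to reduce $\sum_{(a,m)=1}|C_m(a)|$ to a conditionally convergent prime sum via character orthogonality, then split at a suitable cutoff $X$: the short range is bounded by Mertens, and the tail by Cauchy-Schwarz combined with a Perron estimate coming from Lemma \ref{GoodLFunctions}. I will first rewrite
\[
\log\frac{K(1,\chi)}{L(1,\chi)} = -\log L(1,\chi) + E(\chi), \quad E(\chi) := \sum_{k \geq 2}\sum_{p}\frac{\chi(p)^k-\chi(p)}{kp^k},
\]
and $\log L(1,\chi) = \sum_p\chi(p)/p + D(\chi)$ with $D(\chi) := \sum_{k \geq 2}\sum_p \chi(p^k)/(kp^k)$. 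Both $E$ and $D$ are absolutely convergent with summand $O(p^{-k})$. Applying character orthogonality term by term in these double sums and using $\sum_a|\mathbf{1}_{p^k \equiv a(m)} - \mathbf{1}_{p \equiv a(m)}| \leq 2$ together with $\sum_p\sum_{k\geq 2}\frac{1}{kp^k} = O(1)$, the $E$- and $D$-contributions to $\sum_a|C_m(a)|$ are each $O(1)$. The constant $(\gamma + \log(\phi(m)/m))/\phi(m)$ contributes $O(\log_3 m)$. Thus it will suffice to control
\[
\Sigma := \sum_{(a,m)=1}\Big|\frac{1}{\phi(m)}\sum_{\chi \neq \chi_0}\overline{\chi}(a)\sum_p \frac{\chi(p)}{p}\Big|.
\]

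Next, I set $X := \exp((\log m)^3/c)$, where $c>0$ is the Landau-Page zero-free-region constant, so that $\log_2 X = O(\log_2 m)$, and split $\sum_p \chi(p)/p = \sum_{p \leq X}\chi(p)/p + T_X(\chi)$. Orthogonality converts the truncated piece to $\sum_{p \leq X,\,(p,m)=1}p^{-1}[\mathbf{1}_{p\equiv a(m)} - \phi(m)^{-1}]$; summing $|\cdot|$ over $a$ and applying the triangle inequality bounds it by $2\sum_{p \leq X,\,(p,m)=1}p^{-1} \leq 2\log_2 X + O(1) = O(\log_2 m)$ by Mertens. For the tail $T_X(\chi)$, I apply Lemma \ref{GoodLFunctions} with $\sigma_0 = 1 - c/\log m$ and Perron parameter $T = m^2$, yielding $|T_X(\chi)| \ll \exp(-c_0(\log m)^2)$ for some $c_0>0$ and for every non-principal $\chi \bmod m$ whose inducing primitive character has no Siegel zero. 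Then Cauchy-Schwarz via Parseval gives
\[
\sum_a\Big|\frac{1}{\phi(m)}\sum_{\chi \neq \chi_0}\overline{\chi}(a) T_X(\chi)\Big| \leq \Big(\sum_{\chi \neq \chi_0}|T_X(\chi)|^2\Big)^{1/2} \ll \phi(m)^{1/2}e^{-c_0(\log m)^2} = o(1),
\]
completing the non-exceptional case $\sum_a|C_m(a)| \ll \log_2 m$.

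In the exceptional case, the only character violating the tail bound is the real exceptional $\chi_m$: by the classical bound $|L(1,\chi_m)| \ll \log m$ and Siegel's theorem $L(1,\chi_m) \gg_\varepsilon m^{-\varepsilon}$, we obtain $|\log L(1,\chi_m)| \leq \varepsilon \log m + O(\log_2 m)$, and therefore $|T_X(\chi_m)| \leq \varepsilon \log m + O(\log_2 m)$. Separating $\chi_m$ from the tail sum, its contribution is $\phi(m)\cdot\phi(m)^{-1}|T_X(\chi_m)| = O(\varepsilon \log m + \log_2 m)$, yielding the $\varepsilon \log m$ term in the exceptional bound. \textbf{The main technical obstacle} will be the precise calibration in Lemma \ref{GoodLFunctions}: at the cutoff $X = \exp((\log m)^3/c)$ the Perron error must be small enough ($\ll e^{-c_0(\log m)^2}$) to beat $\phi(m)^{1/2}$ after Cauchy-Schwarz. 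This is arranged by using the classical zero-free region and taking the Perron parameter equal to $m^2$, so that the decisive factor $X^{(\sigma_0-1)/2} = e^{-(\log m)^2/2}$ dominates the error estimate.
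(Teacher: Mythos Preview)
Your proposal is correct and follows essentially the same approach as the paper: both arguments truncate at $X=\exp((\log m)^3)$, apply Lemma~\ref{GoodLFunctions} with $T\asymp m^2$ and $1-\sigma_0\asymp 1/\log m$ to control the tail, invoke character orthogonality so that the truncated part is bounded by $2\sum_{p\leq X}1/p\ll\log_2 m$ via Mertens, and isolate the exceptional character using Siegel's bound. The only cosmetic differences are that the paper uses the identity $-\log(1-k_\chi(p)/p)+\log(1-\chi(p)/p)=\chi(p)\log(1-1/p)$ to package the prime-power terms (your $E(\chi),D(\chi)$) directly into the main sum, and the paper bounds the tail by the trivial triangle inequality (per-character error $O(1/m)$ summed over $\phi(m)$ characters and $\phi(m)$ residues) rather than Cauchy--Schwarz/Parseval, which is simpler but equivalent here.
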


\begin{proof}
First, since $\phi(m)\gg m/\log_2m$ then 
$$
C_m(a)=\frac{1}{\phi(m)}\sum_{\chi\neq \chi_0 \bmod m} \overline{\chi}(a) \cdot \log \frac{K(1, \chi)}{L(1, \chi)}+O\left(\frac{\log_3 m}{\phi(m)}\right).
$$
Let $\chi$ be a non-principal character modulo $m$. By \eqref{BoundK} we have
\begin{align*}
\log K(1, \chi)&= -\sum_{p\leq x} \log\left(1-\frac{k_{\chi}(p)}{p}\right) +O\left(\sum_{p>x}\frac{|k_{\chi}(p)|}{p}\right)\\
&= -\sum_{p\leq x} \log\left(1-\frac{k_{\chi}(p)}{p}\right) +O\left(\frac 1x\right).
\end{align*}
Furthermore, it follows from \eqref{KChi} that
$$-\log\left(1-\frac{k_{\chi}(p)}{p}\right)+\log \left(1-\frac{\chi(p)}{p}\right)= \chi(p)\log \left(1-\frac{1}{p}\right).$$
If $\chi$ is a non-exceptional character, then $L(\sigma+it, \chi)$ does not vanish when
$$ \sigma\geq 1-\frac{c}{\log(m(|t|+2))},$$
for some positive constant $c$. 
Therefore, taking $T=m^2$, $\sigma_0= 1- c/(4\log m)$ and $X=\exp((\log m)^{3})$ in Lemma \ref{GoodLFunctions} we obtain
\begin{equation}\label{LongApproxL1}
\log L(1,\chi)= -\sum_{p\leq X}\log\left(1-
\frac{\chi(p)}{p}\right) +O\left(\frac{1}{m}\right).
\end{equation}
 We first consider the case when $m$ is a non-exceptional modulus. Using the above estimates together with the orthogonality of characters we conclude that
\begin{equation}\label{OrthogonalityCM}
\begin{aligned}
C_m(a)&= \frac{1}{\phi(m)}\sum_{\chi\neq \chi_0 \bmod m} \overline{\chi}(a)\sum_{p\leq X}\chi(p)\log \left(1-\frac{1}{p}\right) +O\left(\frac{\log_3 m}{\phi(m)}\right)\\
&= \sum_{\substack{p\leq X\\ p\equiv a \bmod m}}\log \left(1-\frac{1}{p}\right)-\frac{1}{\phi(m)} \sum_{\substack{p\leq X \\ p\nmid m}}\log \left(1-\frac{1}{p}\right)+O\left(\frac{\log_3 m}{\phi(m)}\right).\\
\end{aligned}
\end{equation}
Thus, we deduce in this case that
\begin{align*}
\sum_{\substack{a \bmod m\\ (a,m)=1}} \left|C_m(a)\right|
&\leq 
- \sum_{\substack{a \bmod m\\ (a,m)=1}} \sum_{\substack{p\leq X\\ p\equiv a \bmod m}}\log \left(1-\frac{1}{p}\right)
- \sum_{p\leq X}\log \left(1-\frac{1}{p}\right) +O(\log_3m)\\
& \ll \log_2 m.
\end{align*}
Now, suppose that $m$ is an exceptional modulus, and let $\chi_m$ be the exceptional character modulo $m$. The approximation \eqref{LongApproxL1} is valid for all non-principal characters $\chi\neq \chi_m$ modulo $m$. Furthermore, for $\chi=\chi_m$ we have Siegel's bound (see Theorem 11.4 in \cite{MVbook})
$$ \log L(1, \chi_m)\geq -\varepsilon \log m +O_{\varepsilon}(1),$$
and hence, instead of \eqref{LongApproxL1} we use that
$$ \left|\log L(1, \chi_m) + \sum_{p\leq X}\log\left(1-
\frac{\chi_m(p)}{p}\right)\right|\leq \varepsilon\log m+ O(\log_2 m).$$
Thus, similarly to \eqref{OrthogonalityCM} we obtain in this case that
$$
|C_m(a)| \leq -\sum_{\substack{p\leq X\\ p\equiv a \bmod m}}\log \left(1-\frac{1}{p}\right)+ \frac{\varepsilon\log m}{\phi(m)}+O\left(\frac{\log_2 m}{\phi(m)}\right).$$
Summing over all reduced residue classes $a$ modulo $m$ gives the desired bound.
\end{proof}
Proposition \ref{PreciseSumMax} now follows readily.
\begin{proof}[Proof of Proposition \ref{PreciseSumMax}]
First, note that for each fixed $\ell$ modulo $k$, there are exactly $\phi(m)/k$ residue classes $a$ modulo $m$ such that $(a,m)=1$ and $\psi(a) = e\left(\frac{\ell}{k}\right)$. This follows from the simple fact that the number of such residue classes equals the size of the kernel of $\psi$, and by basic group theory this is $|\left(\mb{Z}/m\mb{Z}\right)^{\ast}|/|\text{Im}(\psi)| = \phi(m)/k$. Thus, we deduce from \eqref{ALLAS} and Lemma \ref{LOGLZ} that
\begin{align*}
\sum_{p \leq y \atop \psi(p) = e\left(\frac{\ell}{k}\right)} \frac{1}{p} &= \sum_{a\bmod m \atop \psi(a) = e\left(\frac{\ell}{k}\right)} \left(\frac{\log_2 y}{\phi(m)} - C_m(a) + \sum_{p \leq y \atop p \equiv a(m)} \left(\log\left(1-\frac{1}{p}\right)+ \frac{1}{p}\right) + O\left(\frac{1}{(\log y)^{4/7}}\right)\right)\\
&= \frac{\log_2 y}{k} - \sum_{a \bmod m \atop \psi(a) = e\left(\frac{\ell}{k}\right)} C_m(a) + \sum_{p \leq y \atop \psi(p) =e\left(\frac{\ell}{k}\right)} \left(\log\left(1-\frac{1}{p}\right) + \frac{1}{p}\right) + O\left(\frac{\phi(m)}{k(\log y)^{4/7}}\right).
\end{align*}
Summing over $\ell$ modulo $k$, and using Lemma \ref{MAX},  we get
\begin{align*}
&\sum_{\ell\bmod k} \max_{z \in \mu_g \cup \{0\}} \textup{Re}\left(z \cdot e\left(-\frac{\ell}{k}\right)\right)\sum_{p \leq y \atop \psi(p)= e\left(\frac{\ell}{k}\right)} \frac{1}{p} \\
&= (1-\delta_g)\frac{\pi/gk^{\ast}}{\tan(\pi/gk^{\ast})}\log_2 y + \theta \sum_{\substack{a \bmod m\\ (a,m)=1}} |C_m(a)|+ O\left(1\right),
\end{align*}
for some complex number $|\theta|\leq 1$. Appealing to Lemma \ref{AverageMertens} completes the proof. 
\end{proof}

Let $\psi$ be any odd character modulo $m$, with even order $k$. In choosing characters $\chi$ of order $g$ and conductor $q\leq N$ that maximize the distance $\mb{D}(\chi,\psi;y)$  with $y\leq (\log N)/10$, we will need to be able to choose the values of $\chi$ at the ``small'' primes $p \leq y$. Using Eisenstein's reciprocity law and the Chinese Remainder Theorem, Goldmakher \cite{GOLD} proved the existence of such characters.

\begin{lem}[Proposition 9.3 of \cite{GOLD}] \label{FIX}
Let $g \geq 3$ be fixed, and $y$ be large. Let $\{z_p\}_p$ be a sequence of complex numbers such that $z_p\in \mu_g \cup \{0\}$ for each prime $p$. There exists a positive integer $q$ such that $$g\prod_{\substack{p\leq y\\ p\nmid g}} p\leq q\leq  2g\prod_{\substack{p\leq y\\ p\nmid g}}p,$$ and a primitive Dirichlet character $\chi$ of order $g$ and conductor $q$ such that $\chi(p) = z_p$ for all $p \leq y$ with $p\nmid g$.
\end{lem}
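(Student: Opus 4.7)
The plan is to follow Goldmakher's argument for \cite[Proposition~9.3]{GOLD}, combining Eisenstein's reciprocity law in $\mathbb{Z}[\zeta_g]$ with the Chinese Remainder Theorem and a counting argument for primes in short arithmetic progressions. Set $P := \prod_{p \leq y,\, p\nmid g} p$. In the intended application of the lemma the prescribed values $z_p$ always lie in $\mu_g$ (since $0$ is never an argmax in \eqref{DIST}), so I focus on this case; the extension to $z_p = 0$ is handled by adjoining the prime $p$ to the conductor with an order-$g$ local factor whenever compatible.

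The construction produces $\chi$ as an order-$g$ character modulo a single prime $\ell$ with $\ell \equiv 1 \pmod g$, chosen inside the interval $[gP, 2gP]$. Since $\ell \equiv 1 \pmod g$, the cyclic group $(\mathbb{Z}/\ell\mathbb{Z})^{\times}$ admits $\phi(g)$ characters of order exactly $g$; each sends a prime $p \nmid \ell g$ to the $g$-th power residue symbol $(p/\ell)_g \in \mu_g$, defined by $p^{(\ell-1)/g} \equiv (p/\ell)_g \pmod{\ell}$. The key input is Eisenstein's reciprocity law, which allows one to rewrite $(p/\ell)_g$ in terms of the dual symbol $(\ell/p)_g$, up to a correction factor controlled by the residue classes of $p$ and $\ell$ modulo $g$. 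Since $\ell \equiv 1 \pmod g$ is fixed throughout, this correction is explicit, and the resulting symbol depends only on the residue of $\ell$ modulo $p$.

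Applying this translation at every prime $p \leq y$ with $p \nmid g$, the prescriptions $\chi(p) = z_p$ become a system of congruence conditions on $\ell$: one modulo each such $p$, together with $\ell \equiv 1 \pmod g$. By the Chinese Remainder Theorem these package into a single congruence modulo $gP$, cutting out a set $\mathcal{A}$ of admissible residue classes. A direct count, also exploiting the freedom to choose among the $\phi(g)$ characters of order $g$ modulo $\ell$, shows that the expected number of primes $\ell \in [gP, 2gP]$ with residue in $\mathcal{A}$ is at least of order $g^{\pi(y)} \log y / y$, which tends to infinity with $y$.

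The main technical obstacle is to make this count rigorous, since the modulus $gP \asymp e^y$ far exceeds the range for which Siegel--Walfisz provides effective asymptotics for $\pi(x; gP, a)$. One addresses this either by invoking a classical construction of cyclic extensions with prescribed local data (in the style of Shafarevich--Faddeev), or by a direct application of an effective Chebotarev density theorem to the Kummer extension $\mathbb{Q}(\zeta_g, p^{1/g} : p \leq y)/\mathbb{Q}$, which guarantees a prime $\ell$ in the target interval whose Frobenius matches the prescribed data. Once such $\ell$ is secured, the associated character $\chi$ of order $g$ modulo $\ell$ has the prescribed values and conductor exactly $\ell \in [gP, 2gP]$, completing the proof.
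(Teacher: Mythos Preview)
The paper does not give a proof of this lemma; it is quoted from \cite{GOLD} and used as a black box, so there is no in-paper argument to compare against.

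On the merits of your sketch: the broad framework (power residue symbols, Eisenstein reciprocity, CRT) is indeed the one Goldmakher uses, but your particular implementation has a genuine gap that neither of your proposed remedies closes. You seek a rational prime $\ell\in[gP,2gP]$ lying in a prescribed union $\mathcal{A}$ of residue classes modulo $gP$. The interval length equals the modulus, so each class contributes at most one integer to the interval; you are therefore asking that some integer in a thin set $\mathcal{A}\cap[gP,2gP]$ be prime. The heuristic count may be large, but turning it into an existence statement requires equidistribution of primes in progressions to modulus $gP\asymp e^{y}$ inside an interval of the same length, which is far beyond Siegel--Walfisz or Bombieri--Vinogradov. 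Effective Chebotarev (Lagarias--Montgomery--Odlyzko unconditionally, or its GRH refinement) produces a prime with prescribed Frobenius below some power of the discriminant, \emph{not} a prime in a prescribed short window; the Shafarevich--Faddeev allusion is too vague to supply what is needed here. So the step ``once such $\ell$ is secured'' is precisely the step that is not secured.

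A secondary issue: your reduction to $z_p\in\mu_g$ is harmless for the applications in this paper, but your suggested extension to $z_p=0$ (``adjoin $p$ to the conductor'') is incompatible with the single-prime model you adopt, since $\chi$ of prime conductor $\ell$ has $\chi(p)=0$ only for $p=\ell$.

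Goldmakher's actual argument sidesteps the prime-hunting entirely: one works inside $\mathbb{Z}[\zeta_g]$ and uses the Chinese Remainder Theorem there to build an algebraic integer (not a rational prime) whose $g$-th power residue symbol, restricted to $\mathbb{Z}$, has the prescribed values; the conductor and its size come from the norm of that element, and no count of rational primes in short progressions is required. You should consult \cite{GOLD} directly for the details rather than reconstruct them.
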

However, in order to prove Theorem \ref{MCHILOW} we need to find ``many'' such characters, since we must avoid those in the exceptional set of Theorem \ref{GL}, which has size at most $N^{\varepsilon}$.  To this end we prove
\begin{lem} \label{VEC}
Let $N$ be large. Let $g \geq 3$ be fixed. Let $2 \leq y \leq (\log N)/10$, and put $\mbf{z} = (z_p)_{p \leq y} \in (\mu_g \cup \{0\})^{\pi(y)}$. There are  
$$\gg \frac{N^{3/4}}{g^{2\pi(y)+2} \log^2 N}$$ primitive Dirichlet characters $\chi$ of order $g$ and conductor $q\leq N$ such that $\chi(p) = z_p$ for each $p \leq y$ such that $p\nmid g$.
\end{lem}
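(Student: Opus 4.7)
The plan is to combine Lemma~\ref{FIX} with a tensor-product construction that amplifies a single ``base'' character into many. Lemma~\ref{FIX} yields a primitive character $\chi_0$ of order $g$ and conductor $q_0 \leq 2g\prod_{p \leq y,\,p \nmid g} p$, so by the prime number theorem and $y \leq (\log N)/10$ we have $q_0 \leq 2g N^{1/10+o(1)}$. Set $Y_0 = \{p \leq y : p \nmid g,\ z_p \neq 0\}$ and $M = q_0 \prod_{p \in Y_0} p \leq N^{1/5+o(1)}$. I will construct characters of the form $\chi = \chi_0 \cdot \eta$, where $\eta$ is a nontrivial primitive character of order dividing $g$ and prime conductor $\ell \leq N/q_0$ coprime to $M$, satisfying $\eta(p) = 1$ for every $p \in Y_0$. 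Any such $\chi$ is automatically primitive of conductor $q_0\ell \leq N$, has order exactly $g$ (since $\operatorname{lcm}(g, \operatorname{ord}(\eta)) = g$), and matches $\chi_0$ at every $p \leq y$ with $p \nmid g$: when $z_p \neq 0$ we get $\chi(p) = \chi_0(p)\eta(p) = z_p$, and when $z_p = 0$ the prime $p$ divides $q_0$, so $\chi(p) = 0 = z_p$. Distinct pairs $(\ell, \eta)$ yield distinct $\chi$, since $\ell$ is determined by $\text{cond}(\chi)/q_0$ and then $\eta = \chi/\chi_0$.

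The core task is to lower-bound the number of primes $\ell \leq N/q_0$ with $\ell \equiv 1 \pmod g$, $(\ell, M) = 1$, and every $p \in Y_0$ a $g$-th power residue modulo $\ell$. For each such $\ell$, all $g$ characters of order dividing $g$ modulo $\ell$ are trivial on the subgroup of $g$-th power residues; in particular, every nontrivial one has $\eta(p) = 1$ for all $p \in Y_0$, giving $g-1$ admissible characters per qualifying prime. The condition ``$p$ is a $g$-th power residue mod $\ell$'' is equivalent to $\ell$ splitting completely in $\mathbb{Q}(\zeta_g, p^{1/g})/\mathbb{Q}$; invoking Eisenstein reciprocity (or the fact that the Kummer extension $\mathbb{Q}(\zeta_g, p^{1/g})/\mathbb{Q}(\zeta_g)$ is abelian with conductor polynomial in $gp$), this translates into a union of arithmetic progressions for $\ell$ modulo some integer $M_p \leq p^{O_g(1)}$. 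Combining over $p \in Y_0$ yields a modulus $M' = \operatorname{lcm}_{p \in Y_0}(M_p) \leq N^{O_g(1)/10 + o(1)}$ and admissible residue classes forming a fraction $g^{-|Y_0|}$ of the reduced residues mod $M'$.

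Linnik's theorem now supplies the counting: it applies to each admissible progression modulo $M'$ because $N/q_0 \geq N^{9/10}$ dominates $(M')^L$ (with $L$ Linnik's constant) for $N$ sufficiently large in terms of $g$. Summing over the $\phi(M')/g^{|Y_0|}$ admissible progressions gives $\gg (N/q_0)/(g^{|Y_0|}\log N) \gg N^{9/10}/(g^{\pi(y)}\log N)$ qualifying primes $\ell$, and multiplication by the $g-1$ choices of $\eta$ per prime yields at least this many distinct characters $\chi$. Because $g^{\pi(y)} = N^{o(1)}$ under $y \leq (\log N)/10$, the resulting bound $\gg N^{9/10 - o(1)}/g^{\pi(y)}$ comfortably dominates the target $N^{3/4}/(g^{2\pi(y)+2}\log^2 N)$, with room to spare.

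The principal obstacle is making the reciprocity step effective: one must verify that $M'$ grows only polynomially in the primes of $Y_0$, with exponents bounded in terms of $g$, so that $(M')^L$ stays below $N^{9/10}$ throughout the range $y \leq (\log N)/10$. Eisenstein reciprocity handles this cleanly for odd prime $g$, and for composite odd $g$ the same Kummer-theoretic analysis applies by working over $\mathbb{Q}(\zeta_g)$ with the ray class field of conductor dividing $(g\prod_{p \in Y_0} p)$. Beyond this reciprocity input, the only analytic ingredient is Linnik's theorem on the least prime in an arithmetic progression.
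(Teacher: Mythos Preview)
Your tensor-product strategy---combine the base character from Lemma~\ref{FIX} with many auxiliary characters $\eta$ that are trivial on small primes---is exactly the approper's approach as well. The paper obtains the auxiliary characters by quoting Lemma~2.3 of \cite{LAM}, which produces $\gg N^{3/4}/(g^{2\pi(y)+2}\log^2 N)$ primitive order-$g$ characters $\psi_n$ of conductor $n=q_1q_2$ (a product of two primes) with $\psi_n(p)=1$ for all $p\le y$; tensoring with Goldmakher's base character finishes. Your proposal replaces the citation to \cite{LAM} with an ab initio construction, and that is where the genuine gap lies.

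The specific problem is your assertion that ``$p$ is a $g$-th power residue mod $\ell$'' translates, via Eisenstein reciprocity, into a union of arithmetic progressions for $\ell$ modulo some rational integer $M_p$. This is false for every $g\ge 3$. The extension $\mathbb{Q}(\zeta_g,p^{1/g})/\mathbb{Q}(\zeta_g)$ is indeed abelian, but $\mathbb{Q}(\zeta_g,p^{1/g})/\mathbb{Q}$ is not, so complete splitting of the rational prime $\ell$ is not governed by any congruence on $\ell$ alone. Concretely, for $g=3$ and $p=2$, Gauss showed that $2$ is a cube modulo a prime $\ell\equiv 1\pmod 3$ if and only if $\ell=x^2+27y^2$; this is a condition on the representation of $\ell$ by a quadratic form, not a congruence (e.g., $7$ and $43$ are both $\equiv 7\pmod 9$ and $\equiv 7\pmod 8$, yet $2$ is a cube mod $43$ and not mod $7$). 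What Eisenstein reciprocity actually gives is a condition on the residue of a prime $\lambda$ of $\mathbb{Z}[\zeta_g]$ above $\ell$, modulo an ideal of $\mathbb{Z}[\zeta_g]$, and this does not descend to a congruence on the rational integer $\ell$. Consequently Linnik's theorem on primes in arithmetic progressions is simply inapplicable here.

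Your argument could in principle be salvaged by replacing the Linnik step with an effective Chebotarev density theorem for the nonabelian Galois extension $K=\mathbb{Q}(\zeta_g,\{p^{1/g}:p\in Y_0\})$, but then one must control $[K:\mathbb{Q}]=\phi(g)g^{|Y_0|}$ and $\log|d_K|$ carefully enough to beat the unconditional Lagarias--Odlyzko error term in the range $y\le (\log N)/10$. That is a substantially different (and more delicate) argument than what you have written, and it is precisely the work that \cite{LAM} already packages.
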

The special case $\mbf{z}=\mbf{1}=(1, 1, \dots, 1)$ was proved by the first author in Lemma 2.3 of \cite{LAM}, but the proof there does not appear to generalize to all $\mbf{z}\in (\mu_g \cup \{0\})^{\pi(y)}$. However, we will show that one can combine the special case $\mbf{z}=\mbf{1}$ with Lemma \ref{FIX} in order to obtain the general case in Lemma \ref{VEC}.    

\begin{proof}[Proof of Lemma \ref{VEC}]
Let $S_{\mbf{z},g}(N)$ be the set of all characters $\chi$ of order $g$ and conductor $q\leq N$ such that $\chi(p) = z_p$ for all $p\leq y$ with $p\nmid g$. By Lemma \ref{FIX}, there exists $\ell$ and a primitive Dirichlet character $\xi$ of order $g$ and conductor $\ell$ such that $\xi(p) = z_p$ for all $p \leq y$ with $p\nmid g$. Moreover, one has
$$\log \ell =\sum_{p\leq y}\log p+O_g(1)= y(1+o(1)),$$
by the prime number theorem, and hence $\ell\leq N^{1/8}$ by our assumption on $y$. 

On the other hand, Lemma 2.3 of \cite{LAM} implies that there are
$$\gg  \frac{N^{3/4}}{g^{2\pi(y)+2} \log^2 N}$$
primitive Dirichlet characters $\psi_{n}$ of order $g$ and conductor $n$, such that $n=q_1q_2$ where $N^{3/8}<q_1<q_2<2N^{3/8}$ are primes with $p_1\equiv p_2\equiv 1\bmod g$, and such that $\psi_{n}(p)=1$ for all primes $p\leq y$. Now, for any such $n$ we have $(\ell, n)=1$ since $\ell \leq N^{1/8}$, and hence $\psi_{n}\xi$ is a primitive character of order $g$ and conductor $n \ell \leq  N$. Finally observe that $\psi_{n}\xi(p) = z_p$ for each $p \leq y$ such that $p\nmid g$. Thus we deduce that $\psi_{n}\xi\in S_{\mbf{z},g}(N)$ for every character $\psi_{n}$, completing the proof.
 \end{proof}
 We finish this section by proving Proposition \ref{UPPER} and Theorem \ref{MCHILOW}.
 \begin{proof}[Proof of Proposition \ref{UPPER}]
 Let $m$ be a non-exceptional modulus, and $\psi$ be an odd primitive character modulo $m$ with order $k$.  For each $0\leq \ell \leq k-1$, suppose that the maximum of $\textup{Re}\left(ze\left(-\frac{\ell}{k}\right)\right)$ for $z \in (\mu_g \cup \{0\})^{\pi(y)}$ is attained when $z=z_{\ell}$. Then, it follows from Lemma \ref{VEC} that there are at least $\sqrt{N}$ primitive characters $\chi$ of order $g$ and conductor $q\leq N$ such that
$$\sum_{p \leq y} \text{Re}\frac{\chi(p)\bar{\psi}(p)}{p}=\sum_{\ell\bmod k} \text{Re}\left(z_{\ell} \cdot e\left(-\frac{\ell}{k}\right)\right)\sum_{p \leq y \atop \psi(p)= e\left(\frac{\ell}{k}\right)} \frac{1}{p}+O_{g}(1).$$
The desired result then follows from \eqref{DIST} and Proposition \ref{PreciseSumMax}. 
\end{proof}

\begin{proof}[Proof of Theorem \ref{MCHILOW}]
Let $N$ be sufficiently large, and let $y=(\log N)/10$. Let $m$ be a prime number that is also a non-exceptional modulus, such that $\sqrt{\log_3 N}\leq m \leq 2\sqrt{\log_3N}$. One can make such a choice since it is known that there is at most one exceptional prime modulus between $x$ and $2x$ for any $x\geq 2$ (see Chapter 14 of \cite{Da} for a reference). Let $\psi$ be a primitive character modulo $m$ of order $k=\phi(m)=m-1$. Note that such a character is necessarily odd. By Proposition \ref{UPPER}, there are at least $\sqrt{N}/2$ primitive characters of order $g$ and conductor $N^{1/3}\leq q\leq N$ such that 
\begin{equation*}
\mb{D}(\chi,\psi;y)^2 = \left(1-(1-\delta_g)\frac{\pi/gk^{\ast}}{\tan(\pi/gk^{\ast})}\right) \log_2y +O(\log_2 m)= \delta_g \log_3 q + O\left(\log_5 q\right),
\end{equation*}
since $gk^{\ast}\geq k$ and $t/\tan(t)=1+O(t^2)$.
Thus, since $\mb{D}(\chi,\psi;\log q)^2=\mb{D}(\chi,\psi;y)^2+O(1)$, then it follows from Theorem \ref{GL} (with $\varepsilon=1/4$) that there are at least $\sqrt{N}/3$ primitive characters of order $g$ and conductor $N^{1/3}\leq q\leq N$ such that
$$ 
M(\chi) \gg \frac{\sqrt{qm}}{\phi(m)} (\log_2 q)^{1-\delta_g} (\log_4 q)^{O(1)}\gg \sqrt{q} (\log_2 q)^{1-\delta_g} (\log_3 q)^{-\frac14}(\log_4 q)^{O(1)} .
$$
\end{proof}
\section{Estimates for $\mc{M}(\chi\bar{\psi}; y, T)$: Proofs of Propositions \ref{MINDIST} and \ref{AD}}

\subsection{Lower bounds for $\mc{M}(\chi\bar{\psi}; y, T)$ for small twists $T$: Proof of Proposition \ref{MINDIST}}
Let $\chi$ be a primitive character modulo $q$ of odd order $g\geq 3$, and $\psi$ be an odd primitive character of conductor $m$ and order $k$. Let $y\geq \exp(m^{7/(4\alpha)})$ be a real number, and put $z=\exp\left((\log y)^{\alpha}\right)$. Since $m\leq(\log z)^{4/7}$, then it follows from 
Proposition \ref{UPPER2} that for all $x\geq z$ we have
\begin{equation}\label{ProUPPER2}
\begin{aligned}
\mb{D}(\chi, \psi; x)^2 &\geq \left(1-(1-\delta_g)\frac{\pi/gk^{\ast}}{\tan(\pi/gk^{\ast})}\right) \log_2x-\beta \varepsilon \log m+ O\left(\log_2 m\right)\\
&\geq \left(\delta_g+ \frac{\pi^2(1-\delta_g)}{4 (gk^{\ast})^2}\right) \log_2x-\beta \varepsilon \log m+ O\left(\log_2 m\right).
\end{aligned}
\end{equation}
since $g k^{\ast}\geq 6$, and $u/\tan(u)\leq 1-u^2/4$ for $0\leq u\leq \pi/6$. 

Let $t$ be a real number such that $|t|\leq (\log y)^{-\alpha}$. First, if $|t|\leq (\log y)^{-1}$ , then since $p^{-it}=1+O(|t|\log p)$ we obtain
\begin{equation}\label{SMALLT}
\mb{D}(\chi\bar{\psi}, n^{it}; y)^2= \mb{D}(\chi, \psi; y)^2+O\left(|t| \sum_{p \leq y} \frac{\log p}{p}\right)=\mb{D}(\chi, \psi; y)^2+O(1),
\end{equation}
and hence the desired lower bound for $\mb{D}(\chi\bar{\psi}, n^{it}; y)^2$ follows from \eqref{ProUPPER2}. Thus, we can and will assume throughout this subsection that $|t|> (\log y)^{-1}$. 
Furthermore, since $|t| \leq (\log y)^{-\alpha}=1/\log z$, then similarly to \eqref{SMALLT} one has
 $$
\mb{D}(\chi\bar{\psi}, n^{it}; y)^2
=\mb{D}(\chi, \psi; z)^2+
\sum_{z < p \leq y} \frac{1-\text{Re}(\chi(p)\bar{\psi}(p)p^{-it})}{p} +O(1).
$$
Therefore, in view of  \eqref{ProUPPER2}, it is enough to prove the following result in order to deduce  Proposition \ref{MINDIST}.
\begin{pro}\label{MEDIUMPRIMES} Let $\chi$, $\psi$, $y$, $z$ and $t$ be as above. Then we have
 $$\sum_{z < p \leq y} \frac{1-\textup{Re}(\chi(p)\bar{\psi}(p)p^{-it})}{p}\geq \delta_g \log\left(\frac{\log y}{\log z}\right)+O(1).$$
\end{pro}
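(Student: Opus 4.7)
The plan is to lower bound the integrand by replacing $\chi(p)$, which lies in $\mu_g\cup\{0\}$, with the value in $\mu_g$ maximising the real part, and then to Fourier-expand the resulting function. Writing $\psi(p)=e(\ell_p/k)$ for $p\nmid m$ and setting $\gamma_p := \ell_p/k + t(\log p)/(2\pi)$, one has
$$\re(\chi(p)\bar{\psi}(p)p^{-it}) \leq \max_{w\in\mu_g}\re\bigl(w\,e(-\gamma_p)\bigr) = F(g\gamma_p), \qquad F(x):=\cos\bigl(2\pi \|x\|/g\bigr),$$
with $\|x\|$ denoting the distance to the nearest integer. Since $F$ is $1$-periodic, continuous, has a single corner at $x=1/2$, and satisfies $F(x)=F(1-x)$, a direct calculation will yield $\hat{F}_0 = 1-\delta_g$ and, for $n\neq 0$, $\hat{F}_n = (-1)^{n+1}\sin(\pi/g)/(\pi g(n^2 - 1/g^2))$. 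In particular $|\hat{F}_n|\ll 1/n^2$ and $\hat{F}_n<0$ precisely when $n$ is even.

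Using $e(ng\gamma_p)=\psi^{ng}(p)p^{ingt}$ for $p\nmid m$, together with Mertens' theorem on the constant Fourier coefficient (the $p\mid m$ contribution to the relevant sums is $O(1)$ since $z$ dwarfs $m$), the expansion will produce
$$\sum_{z<p\leq y}\frac{1-\re(\chi\bar{\psi}(p)p^{-it})}{p} \geq \delta_g\log\!\left(\frac{\log y}{\log z}\right) - \re\!\sum_{n\neq 0}\hat{F}_n S_n + O(1),\quad S_n:=\!\!\!\sum_{\substack{z<p\leq y\\ p\nmid m}}\!\!\!\frac{\psi^{ng}(p)p^{ingt}}{p}.$$
The task then reduces to establishing $\re\sum_{n\neq 0}\hat{F}_n S_n \leq O(1)$, which I would do by splitting according to whether $\psi^{ng}$ is principal ($k^*\mid n$) or not.

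In the non-principal case ($k^*\nmid n$), since $z\geq\exp(m^{7/(4\alpha)})$ is enormous compared to $m$, a Mertens-type estimate obtained from Lemma~\ref{GoodLFunctions} applied with the classical zero-free region (and handling $\chi_m$ separately via Siegel's theorem when $m$ is exceptional) will yield $\sum_{p\leq x}\psi^{ng}(p)p^{ingt}/p = \log L(1-ingt,\psi^{ng})+O(1)$ for both $x=z$ and $x=y$; subtracting gives $S_n=O(1)$ uniformly in $n$ and $t$, and summing against $|\hat{F}_n|\ll 1/n^2$ contributes $O(1)$. In the principal case ($k^*\mid n$, $n\neq 0$), write $n=jk^*$. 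Since $\psi(-1)=-1$ forces $k$ to be even and $g$ odd forces $(g,k)$ odd, $k^*=k/(g,k)$ is even, whence $n$ is even and $\hat{F}_n<0$. Partial summation using Mertens' theorem then gives $\sum_{z<p\leq y}p^{is}/p = \int_{\log z}^{\log y}e^{isv}/v\,dv + O(1)$, whose real part equals $\log(1/(|ngt|\log z))+O(1)$ when $|ngt|\log z\leq 1$ and is $O(1)$ otherwise; in both regimes $\re S_n\geq -O(1)$. Since $\hat{F}_n<0$, it follows that $\hat{F}_n\re S_n \leq |\hat{F}_n|\cdot O(1) = O(1/n^2)$, and summing over $n=jk^*$, $j\geq 1$, contributes $O(1)$.

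The main obstacle is precisely this sign analysis in the principal case: for $n=\pm k^*$ and $|t|$ near the lower end $(\log y)^{-1}$ of its allowed range, the real part of $S_n$ is as large as $(1-\alpha)\log_2 y$, which would destroy the bound were it not for the fact that $\hat{F}_{k^*}<0$ — a consequence of the evenness of $k^*$, itself forced by the odd parity of $\psi$ and the oddness of $g$. This sign flip converts what looks like a dangerous contribution into a favorable one. The same structural gain, of order $(gk^*)^{-2}\log_2y$, is captured more transparently on the initial range $p\leq z$ by Proposition~\ref{UPPER2}, which is why the secondary term in Proposition~\ref{MINDIST} materialises through that range while Proposition~\ref{MEDIUMPRIMES} needs only the leading $\delta_g$ rate.
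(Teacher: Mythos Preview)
Your argument is correct and takes a genuinely different route from the paper's. The paper discretises $(z,y]$ into intervals $(x_r,x_{r+1}]$ with $x_r=(1+\delta)^rz$, freezes $p^{-it}\approx e(\theta_r)$ on each, applies Siegel--Walfisz to primes in progressions modulo $m$, invokes Goldmakher's Lemma~\ref{MAXGold} to rewrite the averaged maximum as $F_{gk^*}(-gk^*\theta_r)\sin(\pi/g)/(k^*\tan(\pi/gk^*))$, and then converts the Riemann sum into the integral $\int_{\log z}^{\log y}F_{gk^*}(tgk^*u/2\pi)\,du/u$ bounded via Lemma~\ref{IntegralFN}. Your approach instead Fourier-expands the single-variable function $F(x)=\cos(2\pi\|x\|/g)$, reducing the problem to controlling $\sum_{n\neq 0}\hat F_n\,\re S_n$, where the non-principal modes are killed by Siegel--Walfisz (the same input the paper uses, though you phrase it via Lemma~\ref{GoodLFunctions}; note that lemma is stated at $s=1$ rather than $1+it$, so the cleaner justification is exactly the Siegel--Walfisz-plus-partial-summation step the paper employs), and the principal modes $n=jk^*$ are rendered harmless by the sign observation $\hat F_n<0$ together with the lower bound $\int_a^b\cos u\,du/u\geq -C$.

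Your route makes the structural mechanism more transparent: the entire proposition hinges on the parity fact that $k^*$ is even (from $\psi$ odd and $g$ odd), which in your language forces $\hat F_{jk^*}<0$ and in the paper's language is hidden inside the formula for $F_{gk^*}$ and the integral inequality \eqref{LOWUPINT}. The paper's packaging, on the other hand, produces the explicit function $F_{gk^*}$ that it reuses verbatim in Proposition~\ref{AD2}, where equality rather than inequality is needed; your Fourier decomposition would also work there but would require re-assembling the series. One minor point: for uniformity in $n$ you should truncate the Fourier series at some threshold $N$ (say $N\asymp\log_2 y$), handle $|n|\leq N$ via Siegel--Walfisz with error $O((1+|ngt|)\phi(m)\exp(-b\sqrt{\log z}))$, and use the trivial bound $|S_n|\ll\log_2 y$ together with $|\hat F_n|\ll n^{-2}$ for the tail.
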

To establish this result, we will follow the arguments in Section 8 of \cite{GOLD}. We shall need the following lemmas.
\begin{lem}[Lemma 8.3 of \cite{GOLD}]\label{MAXGold}
Let $g\geq 3$ be odd, $k\geq 2$ be even, and $\theta\in \mathbb{R}$. Put $k^{\ast}=k/(g, k)$. Then we have
$$
\frac{1}{k}\sum_{\ell \bmod k} \max_{z\in \mu_g\cup \{0\}}\textup{Re}\left(z \cdot e\left(\theta - \frac{\ell}{k}\right)\right) = \frac{\sin(\pi/g)}{ k^{\ast}\tan(\pi/gk^{\ast})} F_{gk^{\ast}}\left(-gk^{\ast} \theta\right),
$$
where $F_n(u) := \cos(2\pi\{u\}/n) + \tan(\pi/n)\sin(2\pi \{u\}/n)$, and $\{u\}$ is the fractional part of $u$.
\end{lem}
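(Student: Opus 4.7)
The plan is to obtain the identity by replacing the inner maximum with an explicit closed-form expression, reducing the sum over $\mathbb{Z}/k\mathbb{Z}$ to one over $\mathbb{Z}/k^*\mathbb{Z}$, and evaluating the resulting finite cosine sum via the geometric series formula. First I would prove the pointwise identity
$$\max_{z \in \mu_g \cup \{0\}} \textup{Re}\bigl(z \cdot e(\phi)\bigr) = \cos\bigl(2\pi \|g\phi\|/g\bigr),$$
where $\|x\|$ denotes the distance from $x$ to the nearest integer. Indeed the real part of $e^{2\pi i j/g} e(\phi)$ is $\cos(2\pi(j/g+\phi))$, which over $j \in \{0,\dots,g-1\}$ is maximized when $j/g+\phi$ is closest to an integer; the minimum such distance equals $\|g\phi\|/g$. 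Taking $z = 0$ is never better because $\|g\phi\|/g \leq 1/(2g) < 1/4$ forces the cosine to be positive.

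Next I would translate the $\ell$-sum to a $k^*$-sum. Writing $d = (g,k)$, the map $\ell \mapsto g\ell \bmod k$ is a $d$-to-$1$ surjection onto $d\mathbb{Z}/k\mathbb{Z}$, so as $\ell$ ranges over $\mathbb{Z}/k\mathbb{Z}$ the rational $g\ell/k \bmod 1$ takes each value in $\{0, 1/k^*, \dots, (k^*-1)/k^*\}$ exactly $d$ times. Combined with the pointwise identity this gives
$$\frac{1}{k}\sum_{\ell \bmod k}\max_{z \in \mu_g \cup \{0\}}\textup{Re}\bigl(z \cdot e(\theta-\ell/k)\bigr)=\frac{1}{k^*}\sum_{r=0}^{k^*-1}\cos\bigl(2\pi\|g\theta - r/k^*\|/g\bigr).$$
I would then parameterize by letting $t \in [0, 1/k^*)$ be the unique representative with $g\theta \equiv t \pmod{1/k^*}$, so that $s := k^* t = \{gk^*\theta\} \in [0,1)$; the set $\{g\theta - r/k^* \bmod 1 : 0 \leq r < k^*\}$ then coincides with $\{t + j/k^* : 0 \leq j < k^*\}$.

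The final step uses the hypothesis $2 \mid k$: since $g$ is odd, $k^* = k/(g,k)$ is also even, so I write $k^* = 2m$. The points $t + j/k^*$ lie in $[0,1/2)$ for $j \leq m-1$ and in $[1/2,1)$ for $j \geq m$, so their distances to the nearest integer are $t + j/k^*$ and $1/2 - t - (j-m)/k^*$ respectively. Pairing $j \in \{0,\dots,m-1\}$ with $j+m$ produces pairs $\cos A_j + \cos B_j$ with $A_j + B_j = \pi/g$ independent of $j$; the sum-to-product identity converts each pair into $2\cos(\pi/(2g))\cos(A_j - \pi/(2g))$, and the resulting cosine sum in arithmetic progression with common difference $\Delta = 2\pi/(gk^*)$ is evaluated by the standard closed form. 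After using $2\sin(\pi/(2g))\cos(\pi/(2g)) = \sin(\pi/g)$ one obtains
$$\frac{1}{k^*}\sum_{r=0}^{k^*-1}\cos\bigl(2\pi\|g\theta - r/k^*\|/g\bigr) = \frac{\sin(\pi/g)}{k^*\sin(\pi/(gk^*))}\cos\bigl(2\pi s/(gk^*)-\pi/(gk^*)\bigr).$$
To identify this with the right-hand side of the lemma I would rewrite $F_n(u) = \sec(\pi/n)\cos(2\pi\{u\}/n - \pi/n)$ and use $\{-gk^*\theta\} = 1-s$ for $s \neq 0$ together with the even symmetry of cosine about $\pi/(gk^*)$ (the degenerate case $s = 0$ being immediate). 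The main obstacle is the bookkeeping between the ``distance to the nearest integer'' parametrization produced by the maximum and the ``fractional part'' parametrization appearing in $F_{gk^*}$; the evenness of $k^*$ is essential precisely because it enables the clean pairing $A_j + B_j = \pi/g$ and thus the telescoping via sum-to-product.
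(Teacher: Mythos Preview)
The paper does not prove this lemma; it merely quotes it as Lemma~8.3 of \cite{GOLD}. Your proposal supplies a complete, correct, self-contained proof. The three steps --- the closed form $\max_{z\in\mu_g\cup\{0\}}\re(ze(\phi))=\cos(2\pi\|g\phi\|/g)$, the reduction from $\mb{Z}/k\mb{Z}$ to $\mb{Z}/k^{\ast}\mb{Z}$ via the $d$-to-$1$ map $\ell\mapsto g\ell$, and the pairing $j\leftrightarrow j+k^{\ast}/2$ giving $A_j+B_j=\pi/g$ followed by the arithmetic-progression cosine sum --- all check out, as does the identification $F_n(u)=\sec(\pi/n)\cos(2\pi\{u\}/n-\pi/n)$ and the final symmetry $\{-gk^{\ast}\theta\}=1-s$. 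Your observation that the evenness of $k^{\ast}$ (forced by $g$ odd, $k$ even) is exactly what makes the pairing work is the key structural point.
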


\begin{lem}\label{IntegralFN}
Let $T>1$ and $n\geq 3$ be a positive integer. Then
\begin{equation}
\int_1^T \frac{F_{n}(u)}{u} du= \frac{n}{\pi} \tan\left(\frac{\pi}{n}\right) \log T + O(1), \label{UPINT}
\end{equation}
and 
\begin{equation}
\int_{1/T}^1 \frac{F_{n}(u)}{u} du= \log T+ O(1). \label{LOWINT}
\end{equation}
In particular, for any $0<A<B$ we have
\begin{equation}\label{LOWUPINT}
\int_{A}^B \frac{F_{n}(u)}{u} du\leq  \frac{n}{\pi} \tan\left(\frac{\pi}{n}\right)\log(B/A)+ O(1),
\end{equation}
and the constants in the $O(1)$ error terms are absolute. 
\end{lem}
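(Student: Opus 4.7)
The plan is to exploit the $1$-periodicity of $F_n$ together with an explicit formula for its period-average, then handle (5.1), (5.2), (5.3) in turn. A key preliminary is the identity
\[
F_n(u) = \sec(\pi/n)\cos\!\left(\tfrac{2\pi\{u\}}{n}-\tfrac{\pi}{n}\right),
\]
which shows $F_n$ is $1$-periodic, continuous (it equals $1$ at every integer), and uniformly bounded by $\sec(\pi/n)\leq 2$ for $n\geq 3$. A short direct integration using the double-angle formulas $\sin(2\pi/n)=2\sin(\pi/n)\cos(\pi/n)$ and $1-\cos(2\pi/n)=2\sin^2(\pi/n)$ yields the period-average $C_n := \int_0^1 F_n(u)\,du = (n/\pi)\tan(\pi/n)$. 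Note also that $C_n\geq 1$ for $n\geq 3$, since $\tan x\geq x$ on $[0,\pi/2)$.

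For (5.1), I would write $G_n := F_n - C_n$, a bounded, mean-zero, $1$-periodic function, and split
\[
\int_1^T \frac{F_n(u)}{u}\,du = C_n\log T + \int_1^T \frac{G_n(u)}{u}\,du,
\]
so that it suffices to show the remainder is $O(1)$. Setting $H(u) := \int_0^u G_n(v)\,dv$, the mean-zero condition forces $H$ itself to be $1$-periodic and hence $|H|=O(1)$ uniformly in $n$. Integration by parts then gives $\int_1^T G_n(u)/u\,du = H(T)/T - H(1) + \int_1^T H(u)/u^2\,du = O(1)$. For (5.2), I would use that $\{u\}=u$ on $(0,1]$; Taylor expansion of $\cos$ and $\sin$ at $0$ then gives $F_n(u) = 1 + O(u)$ uniformly for $n\geq 3$ and $u\in (0,1]$, whence $\int_{1/T}^1 F_n(u)/u\,du = \log T + O(1)$.

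For (5.3), I would split into three cases depending on the position of $[A,B]$ relative to $1$. If $1\leq A<B$, simply subtract (5.1) applied on $[1,B]$ and on $[1,A]$. If $0<A<B\leq 1$, (5.2) gives $\int_A^B F_n/u\,du = \log(B/A) + O(1)$, which is at most $C_n\log(B/A) + O(1)$ since $C_n\geq 1$ and $\log(B/A)\geq 0$. If $A<1<B$, combining (5.1) on $[1,B]$ with (5.2) on $[A,1]$ yields $\int_A^B F_n/u\,du = -\log A + C_n\log B + O(1)$, and the required bound reduces to verifying $(C_n-1)(-\log A)\geq 0$, which is immediate since $-\log A\geq 0$ and $C_n\geq 1$.

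The proof is essentially mechanical; the only point requiring care is ensuring that the implicit constants in (5.1) and (5.2) are absolute (independent of $n$). This is controlled by the uniform bounds $|F_n|\leq \sec(\pi/3)=2$ and $C_n\leq C_3=3\sqrt{3}/\pi$ valid for all $n\geq 3$, which keep $\|H\|_\infty$ bounded and all error terms absolute.
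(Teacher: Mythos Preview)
Your proof is correct and follows essentially the same approach as the paper: compute the period-average $\int_0^1 F_n(u)\,du = (n/\pi)\tan(\pi/n)$, establish the Taylor estimate $F_n(u)=1+O(u)$ on $(0,1]$ for \eqref{LOWINT}, and deduce \eqref{LOWUPINT} by the same three-case split using $C_n\geq 1$. The only minor difference is in the proof of \eqref{UPINT}: the paper breaks $[1,T]$ into unit intervals and replaces $1/(u+j)$ by $1/j$, whereas you subtract off the mean and bound the remainder via integration by parts using the bounded antiderivative of the mean-zero periodic part---both are standard and equally clean.
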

\begin{proof}
We first prove \eqref{UPINT}. Since $F_{n}$ is bounded and $1$-periodic, we have
\begin{align*}
\int_1^T \frac{F_{n}(u)}{u} du 
&= \sum_{1 \leq j \leq T} \int_0^1 \frac{F_{n}(u)}{u+j} du +O(1) = \sum_{1 \leq j \leq T} \frac{1}{j} \int_0^1 F_{n}(u) du + O(1) \\
&= \frac{n}{\pi} \tan\left(\frac{\pi}{n}\right) \log T + O(1). 
\end{align*}
The second estimate \eqref{LOWINT} follows from observing that for $u\in [0, 1)$ and $n\geq 3$ we have
$$ F_n(u)= 1+ O\left(\frac{u^2}{n^2}+\tan\left(\frac{\pi}{n}\right)\frac{u}{n}\right)=1+O(u).$$
Finally, to prove \eqref{LOWUPINT} we consider the three cases $1\leq A<B$, $A<1<B$, and $A<B\leq 1$. The first case follows from \eqref{UPINT}, and the third follows from \eqref{LOWINT} upon using the inequality $\tan(\pi/n)\ge \pi/n$.  Finally, in the second case we have
$$ \int_A^B \frac{F_{n}(u)}{u} du = \int_A^1 \frac{F_{n}(u)}{u} du +\int_1^B\frac{F_{n}(u)}{u} du=\frac{n}{\pi} \tan\left(\frac{\pi}{n}\right)\log B-\log A+ O(1),$$
which implies the result since $\tan(\pi/n)\ge \pi/n$ and $-\log A>0.$
\end{proof}
\begin{proof}[Proof of Proposition \ref{MEDIUMPRIMES}]
Let $x_0=z$, and $\delta>0$ be a small parameter to be chosen. For each positive integer $r \leq R := \left\lfloor\log(y/z)/\log(1+\delta)\right\rfloor$, set $x_r := (1+\delta)^r z$. We consider the sum
$$  S= \sum_{z < p \leq y}\frac{\text{Re}(\chi(p)\bar{\psi}(p)p^{-it})}{p}=\sum_{0 \leq r \leq R-1} \sum_{x_r < p \leq x_{r+1}} \frac{\text{Re}(\chi(p)\bar{\psi}(p)p^{-it})}{p} + O\left(\delta\right).
$$
Write $\theta_r := -\frac{t \log x_r}{2\pi}$, and note that if $p \in (x_r,x_{r+1}]$ then $$|p^{-it} - e(\theta_r)| \ll |t| \log(1+\delta) \ll \delta|t|,$$ so that
\begin{equation}\label{SPLIT}
S= \sum_{0 \leq r \leq R-1} \sum_{x_r < p \leq x_{r+1}} \frac{\text{Re}(e(\theta_r)\chi(p)\bar{\psi}(p))}{p}
 +O\left(\delta\right).
\end{equation}
For each $0 \leq r \leq R-1$, we define
$$
S_r:=\sum_{x_r < p \leq x_{r+1}} \frac{\text{Re}(e(\theta_r)\chi(p)\bar{\psi}(p))}{p}
\leq  \sum_{\ell \bmod k} \max_{z \in \mu_p \cup \{0\}} \text{Re}\left(ze\left(\theta_r-\frac{\ell}{k}\right)\right)\sum_{a \bmod m \atop \psi(a) = e\left(\frac{\ell}{k}\right)} \sum_{x_r < p \leq x_{r+1} \atop p \equiv a \bmod m} \frac{1}{p}.
$$
Note that $m  \leq (\log x_r)^{4/7}$ for each $0 \leq r \leq R$.  Thus, by the Siegel-Walfisz theorem (see Corollary 11.19 in \cite{MVbook}), there is a positive constant $b$ such that 
\begin{equation*}
\sum_{x_r < p \leq x_{r+1} \atop p \equiv a \bmod m} \log p = \frac{x_{r+1}-x_r}{\phi(m)} +O\left(x_r \exp\left(-b\sqrt{\log x_r}\right)\right),
\end{equation*}
for all $0 \leq r \leq R-1$. Moreover, for $x_r < p \leq x_{r+1} = (1+\delta)x_r$, we have
$$
\frac{1}{p} = \frac{\log p}{x_r\log x_r}\left(1+\frac{p\log p-x_r\log x_r}{x_r\log x_r}\right)^{-1}= \big(1+O(\delta)\big) \frac{\log p}{x_r\log x_r}.
$$
Thus, combining these two statements, we get
\begin{equation*}
\sum_{x_r < p \leq x_{r+1} \atop p \equiv a \bmod m} \frac{1}{p} = \frac{1+O(\delta)}{x_r\log x_r} \sum_{x_r < p \leq x_{r+1} \atop p \equiv a \bmod m} \log p = \big(1+O(\delta)\big)\frac{\delta}{\phi(m) \log x_r} +O\left(\exp\left(-b\sqrt{\log z}\right)\right),
\end{equation*}
and upon summing over $a$ modulo $m$ such that $\psi(a) = e\left(\frac{\ell}{k}\right)$, of which there are $\phi(m)/k$ (as remarked in Section 3), we see that
\begin{align*}
S_r &\leq \big(1+O(\delta)\big)\frac{\delta}{k\log x_r}\sum_{\ell \bmod k} \max_{z \in \mu_p \cup \{0\}} \text{Re}\left(ze\left(\theta_r-\frac{\ell}{k}\right)\right) + O\left(\phi(m)\exp\left(-b\sqrt{\log z}\right)\right)\\
& \leq \big(1+O(\delta)\big)\frac{\delta \sin(\pi/g)}{ k^{\ast}\tan(\pi/gk^{\ast})}\frac{F_{gk^{\ast}}(-gk^{\ast} \theta_r)}{\log x_r}+ O\left(\phi(m)\exp\left(-b\sqrt{\log z}\right)\right)
\end{align*} 
by Lemma \ref{MAXGold}. Summing over $0 \leq r \leq R-1$ this yields
\begin{equation}\label{SPLIT2}
\sum_{0 \leq r \leq R-1} S_r \leq\big(1+O(\delta)\big)\frac{\delta \sin(\pi/g)}{ k^{\ast}\tan(\pi/gk^{\ast})} \sum_{0 \leq r \leq R-1} \frac{F_{gk^{\ast}}\left(\frac{t gk^{\ast}}{2\pi} \log x_r\right)}{\log x_r} +  O\left(\exp\left(-(\log y)^{\frac{\alpha}{4}}\right)\right),
\end{equation} 
since $\phi(m) R\ll (\log y)^3,$ and $z=\exp\left((\log y)^{\alpha}\right).$ 

Recall that for $n\ge 3$, $F_n(u)= \cos(2\pi\{u\}/n) + \tan(\pi/n)\sin(2\pi \{u\}/n)$ is bounded, periodic with period $1$, and continuous on $\mathbb{R}$ (since $\lim_{u\to 1^{-}} F_n(u)=F_n(0)$). Moreover, $F_n$ is continuously differentiable on the interval $(0,1)$, and $F_n'(u) =O(1/n)$ uniformly in $u\in \mathbb{R}\setminus\mathbb{Z}$. It follows from  these facts, together with the mean value theorem, that $|F_n(a)-F_n(b)|=O(|a-b|/n)$ for all $a, b\in \mathbb{R}$ such that $|a-b|<1$, where the constant in the $O$ is absolute. This shows that for all $u\in [\log x_r, \log x_{r+1})$ we have
$$ F_{gk^{\ast}}\left(\frac{t gk^{\ast}}{2\pi} u\right)= F_{gk^{\ast}}\left(\frac{t gk^{\ast}}{2\pi} \log x_r\right) +O\big(\delta|t|\big).$$
Furthermore, we note that
$$\int_{\log x_r}^{\log x_{r+1}}\frac{du}{u}=\big(1+O(\delta)\big)\frac{\delta}{\log x_r}.$$
Combining these two facts, we obtain
\begin{align*}
&\frac{\delta}{\log x_r}F_{gk^{\ast}}\left(\frac{t gk^{\ast}}{2\pi} \log x_r\right)
=\big(1+O(\delta)\big)\int_{\log x_r}^{\log x_{r+1}}F_{gk^{\ast}}\left(\frac{t gk^{\ast}}{2\pi} \log x_r\right)\frac{du}{u}\\
&= \big(1+O(\delta)\big)\int_{\log x_r}^{\log x_{r+1}}F_{gk^{\ast}}\left(\frac{t gk^{\ast}}{2\pi} u\right)\frac{du}{u} +O\left(\delta |t| \int_{\log x_r}^{\log x_{r+1}}\frac{du}{u} \right).
\end{align*}
Summing over $0\leq r\leq R-1$, we get
$$
 \delta\sum_{0\leq r\leq R-1}\frac{F_{gk^{\ast}}\left(\frac{t gk^{\ast}}{2\pi} \log x_r\right)}{\log x_r}
 = \big(1+O(\delta)\big)\int_{\log z}^{\log y}F_{gk^{\ast}}\left(\frac{t gk^{\ast}}{2\pi} u\right)\frac{du}{u} +O\left(\delta\right),
$$
since $\int_{\log x_R}^{\log y} dt/t\ll \delta$. 

We now estimate the integral in the main term above. One can easily check that for $n\geq 3$, $F_n$ is an even function. Making the change of variable $v := \frac{gk^{\ast}|t|}{2\pi} u$, and setting $A := \frac{gk^{\ast}|t|}{2\pi} \log z$ and $B := \frac{gk^{\ast}|t|}{2\pi} \log y$, we get
\begin{equation*}
\int_{\log z}^{\log y} F_{gk^{\ast}}\left(\frac{t gk^{\ast}}{2\pi} u\right)\frac{du}{u} = \int_A^B F_{gk^{\ast}}(v) \frac{dv}{v}\leq \frac{gk^{\ast}}{\pi} \tan\left(\frac{\pi}{gk^{\ast}}\right)\log\left(\frac{\log y}{\log z}\right)+ O(1),
\end{equation*}
by Lemma \ref{IntegralFN}. Combining the above estimates with \eqref{SPLIT} and \eqref{SPLIT2} we obtain
$$ S\leq \big(1+O(\delta)\big) \frac{g}{\pi} \sin\left(\frac{\pi}{g}\right)\log\left(\frac{\log y}{\log z}\right)+ O(1)\leq (1-\delta_g)\log\left(\frac{\log y}{\log z}\right)+O(\delta\log_2 y).$$
Choosing $\delta=(\log_2y)^{-1}$ completes the proof of Proposition \ref{MEDIUMPRIMES}. Proposition \ref{MINDIST} follows upon combining this result with \eqref{ProUPPER2}.
\end{proof}

\subsection{Estimating $\mc{M}(\chi\bar{\psi}; y, T)$ for large twists $T$} In this subsection, we prove the following result which implies Proposition \ref{AD}. 
\begin{pro} \label{AD2}
Assume GRH.  Let $g\geq 3$ be a fixed odd integer. Let $N$ be large and $y\leq (\log N)/10$. Let $\psi$ be an odd primitive character of conductor $m$ such that  $\exp\left(2\sqrt{\log_3 y}\right) \leq m\leq \exp\left(\sqrt{\log y}\right)$. Then, there exist at least $\sqrt{N}$  primitive characters $\chi$ of order $g$ and conductor $q\leq N$ such that 
\begin{equation*}
\mb{D}^2(\chi\bar{\psi}, n^i;y) = \delta_g \log_2 y + O\left(\log_2 m\right).
\end{equation*}
\end{pro}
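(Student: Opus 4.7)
The strategy is to hand-pick the values of $\chi$ at the primes $p \leq y$ so that each $\chi(p)\bar\psi(p)p^{-i}$ is the $g$-th root of unity nearest to $1$, and then to evaluate the resulting prime sum via a Fourier expansion of the max-function combined with GRH-based $L$-function bounds.

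For each prime $p \leq y$ with $p \nmid g$, let $z_p \in \mu_g \cup \{0\}$ achieve $F(\theta_p) := \max_{z \in \mu_g \cup \{0\}} \re(z\, e(\theta_p))$, where $\theta_p := -\ell_p/k - (\log p)/(2\pi)$ with $\psi(p) = e(\ell_p/k)$; note that $e(\theta_p) = \bar\psi(p)p^{-i}$ and $\re(z_p \bar\psi(p) p^{-i}) = F(\theta_p)$. Since $y \leq (\log N)/10$, applying Lemma \ref{VEC} to $\mbf{z} = (z_p)_{p \leq y}$ produces $\gg N^{3/4}/(\log N)^{O(1)} \geq \sqrt{N}$ primitive characters $\chi$ of order $g$ and conductor $q \leq N$ with $\chi(p) = z_p$ for all such $p$. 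For any such $\chi$, separating off the contribution of primes dividing $gm$ (which is $O(\log_2 m)$ by Mertens) yields
\begin{equation*}
\mb{D}(\chi\bar\psi, n^i; y)^2 = \log_2 y - \sum_{\substack{p \leq y \\ (p,gm)=1}} \frac{F(\theta_p)}{p} + O(\log_2 m).
\end{equation*}

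The function $F$ is continuous and $(1/g)$-periodic, equal on each period to a shifted cosine of amplitude $1$; a direct computation (integrating $\cos(2\pi u/g)$ against $e(-nu)$ on $[-1/2, 1/2]$) gives the absolutely convergent expansion
\begin{equation*}
F(\theta) = (1-\delta_g) + \sum_{n \neq 0} \hat{c}_n\, e(gn\theta), \qquad |\hat{c}_n| \ll 1/n^2,
\end{equation*}
with $\hat{c}_0 = 1-\delta_g$. Substituting $\theta = \theta_p$ gives $e(gn\theta_p) = \bar\psi^{gn}(p)\, p^{-ign}$, so interchanging summation orders produces
\begin{equation*}
\sum_{\substack{p \leq y \\ (p,gm)=1}} \frac{F(\theta_p)}{p} = (1-\delta_g)\log_2 y + O(\log_2 m) + \sum_{n \neq 0} \hat{c}_n \sum_{\substack{p \leq y \\ (p,m)=1}} \frac{\bar\psi^{gn}(p)\, p^{-ign}}{p}.
\end{equation*}

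Each inner sum is controlled under GRH by adapting Lemma \ref{GoodLFunctions} to Archimedean twists (i.e., running Perron's formula for $\log L(1+it,\xi)$ rather than $\log L(1, \xi)$ and shifting the contour to $\re(s) = 1/2$). Combined with the standard GRH majorants $|\log L(1+it, \xi)| \ll \log_2(m(|t|+2))$ for non-principal $\xi$ and $|\log\zeta(1+it)| \ll \log\log(|t|+2)$ for $t \neq 0$, this yields
\begin{equation*}
\left|\sum_{\substack{p \leq y\\(p,m)=1}} \frac{\xi(p)\, p^{-it}}{p}\right| \ll \log_2(m(|t|+2))
\end{equation*}
uniformly for $\xi$ a character mod $m$ and $|t|$ of polynomial size in $\log y$. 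Applying this with $\xi = \bar\psi^{gn}$ and $t = gn$, weighting by $|\hat{c}_n| \ll 1/n^2$, and summing gives a total contribution of $\ll \log_2 m$. Hence $\sum_{(p,gm)=1} F(\theta_p)/p = (1-\delta_g)\log_2 y + O(\log_2 m)$, which gives $\mb{D}(\chi\bar\psi, n^i;y)^2 = \delta_g \log_2 y + O(\log_2 m)$ for all $\sqrt{N}$ characters $\chi$ produced.

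The main obstacle will be the uniform GRH control of the twisted prime sum for shifts $t = gn$ growing polynomially in $\log y$: the upper bound $m \leq \exp(\sqrt{\log y})$ in the hypothesis is precisely what keeps the Perron-GRH truncation error negligible at level $X \ll y$, while the lower bound $m \geq \exp(2\sqrt{\log_3 y})$ ensures that $\log_2 m$ comfortably dominates the absolute constants and the rapidly convergent tail $\sum_n |\hat c_n|\log\log|n|$ absorbed into the error term.
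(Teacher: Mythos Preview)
Your argument is correct and takes a genuinely different route from the paper. The paper mirrors the proof of Proposition \ref{MEDIUMPRIMES}: it discards primes $p\leq z:=\exp((\log m)^2)$ at cost $O(\log_2 m)$, partitions $(z,y]$ into short multiplicative intervals on which $p^{-i}$ is essentially constant, chooses $\chi(p)$ depending only on the interval index and on the value of $\psi(p)$, evaluates each cell via the GRH prime count in arithmetic progressions, and then sums using Lemmas \ref{MAXGold} and \ref{IntegralFN} (with $t=1$ both integration limits $A,B$ exceed $1$, so only \eqref{UPINT} is invoked and no secondary term appears). You instead pick the pointwise-optimal $z_p$ at every prime, Fourier-expand the $1/g$-periodic function $F(\theta)=\max_{z\in\mu_g}\re(ze(\theta))$ once and for all, and bound each mode by a single GRH estimate for $\log L(1+ign,\bar\psi^{gn})$ on the $1$-line. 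Your route is slicker and avoids both the discretization parameter $\delta$ and the $F_{gk^*}$ calculus; the paper's route has the virtue of being a transparent mirror image of the lower-bound argument, so that Propositions \ref{MINDIST} and \ref{AD} visibly arise from the same computation with different integration ranges. Two minor quibbles: the factor $g^{2\pi(y)}$ coming out of Lemma \ref{VEC} is $N^{o(1)}$ rather than $(\log N)^{O(1)}$ (though the conclusion $\geq\sqrt N$ still follows for large $N$), and your restriction ``$|t|$ of polynomial size in $\log y$'' is unnecessary, since for $|gn|>y$ the trivial bound $\sum_{p\leq y}1/p\ll\log_2 y\leq\log_2(m|gn|)$ already delivers what you need.
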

\begin{proof}
We follow the proof of Proposition \ref{MEDIUMPRIMES} in such a way that we achieve equality in all steps. Since the arguments here are similar to those in that proof, we omit some of the details. \\
Let $z := \exp\left((\log m)^2\right)$ and $y \geq z$. Let $\delta>0$ be a small parameter to be chosen and put $R := \left\lfloor \log(y/z)/\log(1+\delta)\right \rfloor$ as before. Set $x_0=z$ and $x_r := (1+\delta)^r x_0$. Then, as $\sum_{p \leq z} \frac{1}{p} \ll \log_2 m$, it suffices to find at least $\sqrt{N}$ primitive characters $\chi$ of order $g$ and conductor $q\leq N$ such that 
\begin{equation} \label{REST}
\sum_{z < p \leq y} \frac{\text{Re}(\chi(p)\bar{\psi}(p)p^{-i})}{p} = (1-\delta_g) \log(\log y/\log z) + O(1).
\end{equation}
Let $\theta_r := -\frac{\log x_r}{2\pi}$, for each $0 \leq r \leq R-1$. As in the proof of Proposition \ref{MEDIUMPRIMES}, when $x_r < p \leq x_{r+1}$ we approximate $p^i$ by $x_r^i$, for each $0 \leq r \leq R-1$. Let $k$ be the order of $\psi$, and for each $r$ let $\{z_{r,\ell}\}_{\ell} \in (\mu_g \cup \{0\})^k$ be chosen so as to maximize the sum
\begin{equation*}
\sum_{\ell \bmod k} \text{Re}\left(z_{r,\ell} \cdot e\left(\theta_r-\frac{\ell}{k}\right)\right)\sum_{a \bmod m \atop \psi(a) =e(\ell/k)} \sum_{x_r < p \leq x_{r+1} \atop p \equiv a \bmod m} \frac{1}{p}.
\end{equation*}
By Lemma \ref{VEC} there are at least $\sqrt{N}$ primitive characters $\chi$ of order $g$ and conductor $q\leq N$ such that $\chi(p) = z_{r,\ell}$ whenever  $x_r < p \leq x_{r+1}$, $\psi(p) = e\left(\ell/k\right)$ and $p\nmid g$. For such characters, it follows that
\begin{align*}
&\sum_{z < p \leq y} \frac{\text{Re}(\chi(p)\bar{\psi}(p)p^{-i})}{p} = \sum_{0 \leq r \leq R-1} \sum_{x_r < p \leq x_{r+1}} \frac{\text{Re}(\chi(p)\bar{\psi(p)} x_r^{-i})}{p} + O\left(\delta\right) \\
&= \sum_{0 \leq r \leq R-1}\sum_{\ell \bmod k} \text{Re}\left(z_{r,\ell} \cdot e\left(\theta_r-\frac{\ell}{k}\right)\right)\sum_{a \bmod m \atop \psi(a) =e(\ell/k)} \sum_{x_r < p \leq x_{r+1} \atop p \equiv a \bmod m} \frac{1}{p} + O_{g}\left(1\right).
\end{align*}
Let 
$$S_r:= \sum_{\ell \bmod k} \text{Re}\left(z_{r,\ell} \cdot\left(\theta_r-\frac{\ell}{k}\right)\right)\sum_{a \bmod m \atop \psi(a) =e(\ell/k)} \sum_{x_r < p \leq x_{r+1} \atop p \equiv a \bmod m} \frac{1}{p}.$$
To estimate the inner sum, we use the following asymptotic formula, which is valid under the assumption of GRH:
\begin{equation*}
\sum_{x_r < p \leq x_{r+1} \atop p \equiv a \bmod m} \log p = \frac{x_{r+1}-x_r}{\phi(m)} + O\left(x_{r}^{1/2}\log^2 x_r\right).
\end{equation*}
This yields
\begin{equation*}
\sum_{x_r < p \leq x_{r+1} \atop p \equiv a \bmod m} \frac{1}{p} = \frac{\delta}{\phi(m) \log x_r} \left(1+O\left(\delta\right)\right) + O\left(x_{r}^{-2/5}\right).
\end{equation*}
Using this estimate and proceeding exactly as in the proof of 
Proposition \ref{MEDIUMPRIMES}, we obtain that 
$$
\sum_{0 \leq r \leq R-1} S_r =\left(1+O\left(\delta\right)\right)\frac{\sin(\pi/g)}{k^{\ast}\tan(\pi/gk^{\ast})}\int_{\log z}^{\log y} \frac{F_{gk^{\ast}}\left(\frac{gk^{\ast}}{2\pi} u\right)}{u} du + O\left(\mathcal{E} \right),
$$
where 
$$
\mathcal{E}\ll \delta + z^{-2/5} \sum_{0 \leq r \leq R-1} (1+\delta)^{-2r/5} \ll \delta  + z^{-2/5}\delta^{-1}.
$$
Here, note that if we transform the integral as we did in the proof of Proposition \ref{MEDIUMPRIMES}, i.e., with $v := \frac{gk^{\ast} u}{2\pi}$ then the bounds of integration, $A := \frac{gk^{\ast} \log z}{2\pi}$ and $B:= \frac{gk^{\ast} \log y}{2\pi}$ are both larger than $1$. Thus, applying Lemma \ref{IntegralFN}, we get
\begin{align*}
\int_{\log z}^{\log y} \frac{F_{gk^{\ast}}\left(\frac{gk^{\ast}}{2\pi} u\right)}{u} du &= \int_A^B \frac{F_{gk^{\ast}}(v)}{v} dv = \int_1^B \frac{F_{gk^{\ast}}(v)}{v} dv - \int_1^A \frac{F_{gk^{\ast}}(v)}{v} dv \\
&= \frac{gk^{\ast}}{\pi}\tan(\pi/gk^{\ast}) \log(B/A) + O(1).
\end{align*}
Inserting this into our estimate for $\sum_r S_r$, we get
\begin{equation*}
\sum_{0 \leq r \leq R-1} S_r = (1-\delta_g)\log(\log y/\log z) + O\left(1+\delta \log_2 y + z^{-\frac{2}{5}} \delta^{-1} \right).
\end{equation*}
Choosing $\delta = (\log_2 y)^{-1}$ as before, and noting that  $z\geq (\log_2 y)^4$ yields \eqref{REST} for $y$ sufficiently large. This completes the proof of Proposition \ref{AD2}. Proposition \ref{AD} follows as well. 
\end{proof}


\section{Logarithmic mean values of completely multiplicative functions: proof of Theorem \ref{LogarithmicMean}}
The key ingredient to the proof of Theorem \ref{LogarithmicMean} is the following generalization of Theorem 2 of  \cite{MV}. 
\begin{thm}\label{MontgomeryVaughan}
Let $f\in \mathcal{F}$ and $x\geq 2$. Then, for any $0<T\leq 1$ we have
$$\sum_{n\leq x} \frac{f(n)}{n}\ll \frac{1}{\log x} \int_{1/\log x}^1 \frac{H_{T}(\alpha)}{\alpha} d\alpha,
$$
where 
$$H_{T}(\alpha)=\left(\sum_{k=-\infty}^{\infty} \max_{s\in \mathcal{A}_{k, T}(\alpha)} \left|\frac{F(1+s)}{s}\right|^2\right)^{1/2}.$$
and 
$$\mathcal{A}_{k,T}(\alpha)=\{s=\sigma+it: \alpha\leq \sigma\leq 1, |t-kT|\leq T/2\}.
$$
\end{thm}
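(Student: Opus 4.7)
My plan is to adapt the proof of Theorem 2 of Montgomery--Vaughan \cite{MV}, together with Tenenbaum's refinements (Chapter III.4 of \cite{Te}), to accommodate partitions of the imaginary axis into intervals of length $T \le 1$ rather than unit length.

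The starting point is a Perron-type identity representing $\sum_{n \le x} f(n)/n$ as a contour integral of $F(1+s) x^s / s$ over a vertical line $\re(s) = \alpha > 0$, combined with an averaging device over $\alpha \in [1/\log x, 1]$ weighted by $d\alpha/\alpha$. This is what generates the outer integral $\frac{1}{\log x}\int_{1/\log x}^1 (\cdots)\, d\alpha$ in the stated bound, and reduces the problem to estimating, for each such $\alpha$, an inner integral of the form
\begin{equation*}
\int_{-\infty}^{\infty} \left|\frac{F(1+\alpha+it)}{\alpha+it}\right| \varphi_\alpha(t) \, dt
\end{equation*}
for an auxiliary weight $\varphi_\alpha$ that encodes the $x^{\alpha+it}$ contribution from the Perron integrand.

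For each fixed $\alpha$, I would partition $\mathbb{R}$ into the intervals $I_k = [kT - T/2,\, kT + T/2]$ underlying the sets $\mathcal{A}_{k,T}(\alpha)$, bound the integrand on each $I_k$ by its maximum, enlarge this to the supremum of $|F(1+s)/s|$ over the full rectangle $\mathcal{A}_{k,T}(\alpha)$, and apply Cauchy--Schwarz in the index $k$ against a suitable dual sequence. The $\ell^2$-norm of the rectangle-maxima is precisely $H_T(\alpha)$; the remaining dual $\ell^2$-norm must be controlled uniformly in $T$.

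The principal obstacle lies in exactly this last step: a naive choice of dual sequence introduces a spurious factor scaling like $T^{-1/2}$ as $T \to 0$, which would spoil the bound. Following the philosophy of \cite{MV}, the fix is to combine a Sobolev-type estimate
\begin{equation*}
\max_{I_k}|g|^2 \le T^{-1}\int_{I_k}|g(t)|^2 \, dt + T \int_{I_k}|g'(t)|^2 \, dt
\end{equation*}
applied to $g(t) = F(1+\alpha+it)/(\alpha+it)$, with the Montgomery--Vaughan Hilbert-type mean-square bounds for both $F$ and $F'$ on intervals of length $T$. The two $T$-factors then cancel, producing a bound of the shape stated. Handling the truncation error in the Perron identity and the convergence of the sums and integrals appearing throughout is routine, and the overall structure is a direct generalization of the classical $T=1$ case.
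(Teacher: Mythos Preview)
Your proposal misidentifies both the overall architecture of the Montgomery--Vaughan argument and the mechanism by which the $T$-dependence is controlled.

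First, the outer integral $\int_{1/\log x}^{1} H_T(\alpha)\,d\alpha/\alpha$ does not arise from averaging a Perron identity over contours $\re(s)=\alpha$; the Perron value is independent of $\alpha$, so such an average cannot generate anything new. In the actual argument one first reduces (via the set $\mc{B}$ of maximal points) to the inequality
\[
|S(x)|\log x \ll \int_e^x \frac{|S(u)|}{u}\,du + \frac{1}{\log x}\Big|\sum_{n\le x}\frac{f(n)}{n}\log n\,\log(x/n)\Big| + \frac{1}{\log x}\Big|\sum_{n\le x}\frac{f(n)}{n}\log^2(x/n)\Big|,
\]
and the $\alpha$-integral appears only after integrating $\int_e^x |S(u)|/u\,du$ by parts and changing variables $\alpha=1/\log u$. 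The quantities that must then be bounded for a single $\alpha$ are, via Parseval,
\[
\int_{-\infty}^{\infty}\Big|\frac{F'(1+\alpha+it)}{\alpha+it}\Big|^2 dt
\quad\text{and}\quad
\int_{-\infty}^{\infty}\Big|\frac{F(1+\alpha+it)}{(\alpha+it)^2}\Big|^2 dt,
\]
and one needs these to be $\ll H_T(\alpha)^2/\alpha$.

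Second, and more seriously, your Sobolev inequality points the wrong way: it bounds $\max_{I_k}|g|^2$ by integrals of $|g|^2$ and $|g'|^2$, hence bounds $H_T(\alpha)^2$ \emph{from above}. But the goal is to bound the two integrals displayed above \emph{by} $H_T(\alpha)^2$, i.e., integrals must be controlled by maxima, not conversely. The Hilbert-type mean-value estimates you mention give $\int_{I_k}|F|^2 \asymp T\sum |f(n)|^2 n^{-2-2\alpha}$, which neither involves $H_T(\alpha)$ nor cancels any stray $T$-factor. The device that actually works on each interval $I_k$ is the factorization
\[
\int_{I_k}\Big|\frac{F'(1+\alpha+it)}{\alpha+it}\Big|^2 dt \le \max_{I_k}\Big|\frac{F(1+\alpha+it)}{\alpha+it}\Big|^2 \int_{I_k}\Big|\frac{F'}{F}(1+\alpha+it)\Big|^2 dt,
\]
followed by Montgomery's majorant principle (comparing the Dirichlet coefficients $\Lambda(n)f(n)$ with $\Lambda(n)$) to get $\int_{I_k}|F'/F|^2\ll\int_{-T/2}^{T/2}|\zeta'/\zeta(1+\alpha+it)|^2dt\ll 1/\alpha$, uniformly in $k$ and $T$. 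Summing over $k$ then gives $H_T(\alpha)^2/\alpha$ directly, with no $T$-loss. This is the only genuinely $T$-sensitive modification, and it is absent from your sketch.
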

Montgomery and Vaughan \cite{MV} established this result for $T=1$, and a straightforward generalization of their proof allows one to obtain Theorem \ref{MontgomeryVaughan} for any $0<T\leq 1$. For the sake of completeness we will include a full sketch of the necessary modifications to obtain this result. The only different treatment occurs when bounding the integrals on the left hand side of \eqref{TheModificationMV} below.  

\begin{lem}\label{MVModification}
Let $0<\alpha, T\leq 1$. Then we have
\begin{equation}\label{TheModificationMV}
\int_{-\infty}^{\infty}\left|\frac{F'(1+\alpha+it)}{\alpha+it}\right|^2 dt+ \int_{-\infty}^{\infty}\left|\frac{F(1+\alpha+it)}{(\alpha+it)^2}\right|^2 dt \ll \frac{H_{T}(\alpha)^2}{\alpha}.
\end{equation}
\end{lem}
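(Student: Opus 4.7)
The plan is to bound each of the two integrals in \eqref{TheModificationMV} separately. In both cases the strategy is to reduce matters to the local sup-bounds $M_k:=\max_{s\in\mathcal{A}_{k,T}(\alpha)}|F(1+s)/s|^2$, whose sum over $k$ equals $H_T(\alpha)^2$. The second integral follows the Montgomery--Vaughan template essentially verbatim; the first is the only step where a modification for $T<1$ is required.

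For the second integral, I would partition $\mathbb{R}=\bigsqcup_k I_k$ with $I_k=[kT-T/2,\,kT+T/2]$. For $t\in I_k$ the point $\alpha+it$ lies in $\mathcal{A}_{k,T}(\alpha)$, so $|F(1+\alpha+it)/(\alpha+it)|^2\le M_k$; pulling out the remaining factor $1/|\alpha+it|^2=1/(\alpha^2+t^2)$ and summing gives
\[
\int_{\mathbb{R}}\frac{|F(1+\alpha+it)|^2}{|\alpha+it|^4}\,dt
\le \sum_k M_k\int_{I_k}\frac{dt}{\alpha^2+t^2}
\le \Bigl(\sum_k M_k\Bigr)\int_{\mathbb{R}}\frac{dt}{\alpha^2+t^2}
= \frac{\pi H_T(\alpha)^2}{\alpha}.
\]
The crucial point is that $\int_{\mathbb{R}}dt/(\alpha^2+t^2)=\pi/\alpha$ is $T$-independent, so the argument generalizes the $T=1$ case unchanged.

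For the first integral, I would bound $F'(1+s_0)$ pointwise at $s_0=\alpha+it_0$ via Cauchy's integral formula on a circle $|z|=\rho$ kept inside the region $\re(s)>0$ of analyticity of $F(1+\cdot)$. Taking $\rho:=\alpha/2$ and applying Cauchy--Schwarz yields
\[
|F'(1+s_0)|^2 \ll \frac{1}{\rho^3}\oint_{|z|=\rho}|F(1+s_0+z)|^2\,|dz|.
\]
Since $|s_0|\ge \alpha=2\rho$ one has $|s_0+z|\asymp|s_0|$ on the contour, so dividing by $|s_0|^2$ and swapping the $t_0$- and $z$-integrals reduces everything to line integrals $\int_{\mathbb{R}}|G(\sigma+iu)|^2\,du$ with $G(s):=F(1+s)/s$ and $\sigma=\alpha+\re z\in[\alpha/2,3\alpha/2]$. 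The partition-and-sup argument of (II), applied in its pointwise form, bounds each such line integral by $\ll TH_T(\alpha)^2$, giving an overall bound $\ll (T/\alpha^2)H_T(\alpha)^2$. This matches the desired $H_T(\alpha)^2/\alpha$ whenever $T\le\alpha$.

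The main obstacle is the regime $T>\alpha$, where the Cauchy step above is lossy by a factor of $T/\alpha$. To close this gap I would use a Plancherel substitute: by Mellin inversion,
\[
\int_{\mathbb{R}}\Bigl|\frac{F'(1+\alpha+it)}{\alpha+it}\Bigr|^2\,dt = 2\pi\int_0^\infty|N_f(e^u)|^2\,e^{-2\alpha u}\,du,\qquad N_f(x):=\sum_{n\le x}\frac{f(n)\log n}{n},
\]
and Abel summation gives $N_f(x)=M_f(x)\log x-\int_1^x M_f(y)\,dy/y$ with $M_f(x):=\sum_{n\le x}f(n)/n$. Splitting $|N_f(e^u)|^2$ into these two pieces, using the pointwise bound $u^2e^{-\alpha u}\ll \alpha^{-2}$ on the diagonal term and Young's inequality against the convolution kernel $e^{-\alpha w}\mathbf{1}_{w>0}$ (of $L^1$-norm $1/\alpha$) on the cross term, reduces both contributions to $\int|F(1+\alpha+it)/(\alpha+it)|^2\,dt$, which the partition-$\{I_k\}$-with-sup-bound argument controls by $\ll TH_T(\alpha)^2$. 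Combining the two regimes yields the claimed uniform bound $H_T(\alpha)^2/\alpha$ for all $0<\alpha,T\le 1$.
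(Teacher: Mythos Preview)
Your treatment of the second integral is correct and coincides with the paper's argument. The difficulty is entirely in the first integral, and here your two-regime strategy does not close.

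For the Cauchy step there is a minor issue you gloss over: the circle $|z|=\alpha/2$ around $s_0=\alpha+it_0$ dips into $\re s\in[\alpha/2,\alpha)$, which lies \emph{outside} $\mathcal{A}_{k,T}(\alpha)$, so the partition-and-sup argument does not literally produce $H_T(\alpha)$; at best you get $H_T(\alpha/2)$. This can be repaired (one can show $H_T(\alpha/2)\ll H_T(\alpha)$ using $\log|F(1+\sigma+it)|-\log|F(1+\alpha+it)|=O(1)$ for $\sigma\in[\alpha/2,\alpha]$), but you do not say so.

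The real gap is in the Plancherel branch for $T>\alpha$. Your Abel decomposition $N_f(e^u)=uM_f(e^u)-\int_0^u M_f(e^v)\,dv$ is fine, and the Young-inequality bound on the second piece genuinely gives $\alpha^{-2}\int|M_f(e^u)|^2e^{-2\alpha u}\,du$. But the ``diagonal'' piece $\int u^2|M_f(e^u)|^2e^{-2\alpha u}\,du$ is, by Parseval, exactly $(2\pi)^{-1}\int|F'(1+s)/s-F(1+s)/s^2|^2\,dt$: it contains the very integral you are trying to bound, so the reduction is circular. If instead you use the pointwise bound $u^2e^{-\alpha u}\ll\alpha^{-2}$ as you indicate, you land at $\alpha^{-2}\int|M_f|^2e^{-\alpha u}\,du$, i.e.\ the line $\re s=\alpha/2$, and after the partition argument you obtain at best $(T/\alpha^2)H_T(\alpha)^2$. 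This is \emph{the same} bound your Cauchy step gave, and it matches $H_T(\alpha)^2/\alpha$ only when $T\le\alpha$; for $T>\alpha$ you are off by the factor $T/\alpha$, which can be as large as $1/\alpha$.

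The paper avoids all of this with a single device that works uniformly in $T\le 1$: write $F'(1+s)/s=(F(1+s)/s)\cdot(F'/F)(1+s)$, pull out $\max_{t\in I_k}|F(1+\alpha+it)/(\alpha+it)|^2\le M_k$, and bound the remaining factor $\int_{I_k}|F'/F(1+\alpha+it)|^2\,dt$ via Montgomery's comparison inequality \eqref{MontgomeryInequality} against $\int_{-T/2}^{T/2}|\zeta'/\zeta(1+\alpha+it)|^2\,dt\ll\int_{\mathbb{R}}dt/(\alpha^2+t^2)=\pi/\alpha$. Summing over $k$ gives $H_T(\alpha)^2/\alpha$ directly. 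The key point you are missing is that $\int_{I_k}|F'/F|^2$ admits a bound $\ll 1/\alpha$ that is \emph{independent of $T$}, coming from the logarithmic derivative rather than from Cauchy or Plancherel estimates on $F'$ itself.
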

\begin{proof}
First, we have
\begin{align*}
\int_{-\infty}^{\infty}\left|\frac{F'(1+\alpha+it)}{\alpha+it}\right|^2 dt
&=\sum_{k=-\infty}^{\infty}\int_{kT-T/2}^{kT+T/2}\left|\frac{F'(1+\alpha+it)}{\alpha+it}\right|^2dt\\
&\leq \sum_{k=-\infty}^{\infty}\max_{|t-kT|\leq T/2}\left|\frac{F(1+\alpha+it)}{\alpha+it}\right|^2 \int_{kT-T/2}^{kT+T/2}\left|\frac{F'(1+\alpha+it)}{F(1+\alpha+it}\right|^2dt.
\end{align*}
To bound the integral on the right hand side of this inequality, we appeal to a result of Montgomery (see Lemma 6.1 of \cite{Te}) which states that if $\sum_{n\geq 1}a_n n^{-s}$ and $\sum_{n\geq 1}b_n n^{-s}$ are two Dirichlet series which are absolutely convergent for $\re(s)>1$ and satisfy $|a_n|\leq b_n$ for all $n\geq 1$, then we have 
\begin{equation}\label{MontgomeryInequality}
\int_{-u}^{u}\left|\sum_{n=1}^{\infty}\frac{a_n}{n^{\sigma+it}}\right|^2dt \leq 3 \int_{-u}^{u}\left|\sum_{n=1}^{\infty}\frac{b_n}{n^{\sigma+it}}\right|^2 dt,
\end{equation}
for any real numbers $u\geq 0$ and $\sigma>1$. This implies that
\begin{align*}
\int_{kT-T/2}^{kT+T/2}\left|\frac{F'(1+\alpha+it)}{F(1+\alpha+it)}\right|^2dt 
&=\int_{-T/2}^{T/2} \left| \sum_{n=1}^{\infty} \frac{\Lambda(n) f(n)}{n^{1+\alpha+ikT+it}}\right|^2dt 
\ll \int_{-T/2}^{T/2} \left|\frac{\zeta'(1+\alpha+it)}{\zeta(1+\alpha+it)}\right|^2 dt\\
& 
\ll \int_{-T/2}^{T/2} \frac{1}{|\alpha+it|^2}dt\leq \int_{-\infty}^{\infty} \frac{1}{\alpha^2+t^2}dt \ll \frac{1}{\alpha}.
\end{align*}
Hence, we deduce that 
$$ \int_{-\infty}^{\infty}\left|\frac{F'(1+\alpha+it)}{\alpha+it}\right|^2 dt \ll \frac{H_{T}(\alpha)^2}{\alpha}.$$
To complete the proof, note that
$$\int_{-\infty}^{\infty}\left|\frac{F(1+\alpha+it)}{(\alpha+it)^2}\right|^2 dt \leq \sum_{k=-\infty}^{\infty}\max_{|t-kT|\leq T/2}\left|\frac{F(1+\alpha+it)}{\alpha+it}\right|^2 \int_{kT-T/2}^{kT+T/2}\frac{1}{|\alpha+it|^2}dt \ll \frac{H_{T}(\alpha)^2}{\alpha}.$$
\end{proof}
\begin{proof}[Proof of Theorem \ref{MontgomeryVaughan}]
Let
$$S(x)=\sum_{n\leq x} \frac{f(n)}{n}.$$
From the Euler product, $|F(2)|>0$, so $H_{T}(\alpha)\gg 1$. Thus, it is enough to prove the statement for $x\geq x_0$, where $x_0$ is a suitably large constant. Moreover, observe that $\int_{1/\log x}^1H_{T}(\alpha)\alpha^{-1} d\alpha$ is strictly increasing as a function of $x$, and $|S(x)\log x|$ is strictly increasing for $x\in [n, n+1)$, for all $n\geq 1$. Hence it is enough to prove the result for $x\in \mathcal{B}$ where
$$\mathcal{B}=\{ x\geq x_0: |S(y)\log y| < |S(x)\log x| \text{ for all } y<x\}.$$
Montgomery and Vaughan proved that for $x\in \mathcal{B}$ we have (see equations (7) and (8) of \cite{MV})
$$
|S(x)|\log x \ll \int_e^x \frac{|S(u)|}{u}du+ \frac{1}{\log x} \left|\sum_{n\leq x} \frac{f(n)}{n}(\log n)\log\left(\frac xn\right)\right| +  \frac{1}{\log x} \left|\sum_{n\leq x} \frac{f(n)}{n}\log^2\left(\frac xn\right)\right|.
$$
Integrating the first integral by parts, we get
\begin{equation}\label{IntegrationPartS} \int_e^x \frac{|S(u)|}{u}du \ll \frac{J(x)}{\log x} +\int_{e}^x \frac{J(u)}{u(\log u)^2} du,
\end{equation}
where
$$J(u):= \int_{e}^u \frac{|S(t)|\log t}{t} dt \ll (\log u)^{1/2} \left(\int_e^u \frac{|S(t)|^2(\log t)^2}{t} dt\right)^{1/2},
$$
by the Cauchy-Schwarz inequality. Using Parseval's Theorem, Montgomery and Vaughan proved that (see equation (14) of \cite{MV}) 
$$\int_e^u \frac{|S(t)|^2(\log t)^2}{t} dt \ll \int_{-\infty}^{\infty}\left|\frac{F'(1+\beta+it)}{\beta+it}\right|^2 dt+ \int_{-\infty}^{\infty}\left|\frac{F(1+\beta+it)}{(\beta+it)^2}\right|^2 dt,$$
where $\beta=2/\log u$. Appealing to Lemma \ref{MVModification} and making the change of variable $\alpha=1/\log u$ in the integral of the right hand side of \eqref{IntegrationPartS} we deduce that
\begin{equation}\label{OneIntegralTerm}
\int_e^x \frac{|S(u)|}{u}du \ll H_{T}\left(\frac{2}{\log x}\right)+ \int_{1/\log x}^1\frac{H_{T}(2\alpha)}{\alpha} d\alpha.
\end{equation}
Since $H_{T}(\alpha)$ is decreasing as a function of $\alpha$, we have
\begin{equation}\label{OneIntegralTerm2}
H_{T}\left(\frac{2}{\log x}\right)\ll \int_{1/\log x}^{2/\log x} \frac{H_{T}(\alpha)}{\alpha} d\alpha \leq \int_{1/\log x}^1\frac{H_{T}(\alpha)}{\alpha} d\alpha.
\end{equation}
Combining \eqref{OneIntegralTerm} and \eqref{OneIntegralTerm2} we get
$$\int_e^x \frac{|S(u)|}{u}du\ll \int_{1/\log x}^1\frac{H_{T}(\alpha)}{\alpha} d\alpha.$$
Furthermore, Montgomery and Vaughan proved that (see pages 207-208 of \cite{MV})
$$ \sum_{n\leq x} \frac{f(n)}{n}(\log n)\log\left(\frac xn\right) \ll \left(\frac{1}{\beta} \int_{-\infty}^{\infty}\left|\frac{F'(1+\beta+it)}{\beta+it}\right|^2 dt\right)^{1/2}
$$
and 
$$  \sum_{n\leq x} \frac{f(n)}{n}\log^2\left(\frac xn\right)\ll \left(\frac{1}{\beta} \int_{-\infty}^{\infty}\left|\frac{F(1+\beta+it)}{(\beta+it)^2}\right|^2 dt\right)^{1/2},
$$
where $\beta=2/\log x$. Combining these bounds with Lemma \ref{MVModification} and equation \eqref{OneIntegralTerm2} completes the proof.
\end{proof}
In order to derive Theorem \ref{LogarithmicMean} from Theorem \ref{MontgomeryVaughan}, we need to bound $H_{T}(\alpha)$, and hence to bound $|F(1+s)|$ for $\re(s)\geq \alpha$. 
Tenenbaum (see Section III.4 of \cite{Te}) proved that for all $y, T\geq 2$, and $1/\log y\leq \alpha\leq 1$, we have 
\begin{equation}\label{Tenenbaum}
\max_{|t|\leq T}|F(1+\alpha+it)|\ll (\log y) \exp\big(-\mathcal{M}(f; y, T)\big).
\end{equation}
However, this bound does not hold for all $T>0$ and $1/\log y\leq \alpha\leq 1$. Indeed, taking $f$ to be the M\"obius function $\mu$, $\alpha=1/2$,  $y$ large and $T=1/\log y$ shows that $\max_{|t|\leq T}|F(1+\alpha+it)|\geq |\zeta(3/2)|^{-1}$, while
$$ \mathcal{M}(f; y, T)= \min_{|t|\leq 1/\log y} \sum_{p\leq y} \frac{1+ \re(p^{-it})}{p}=
2\sum_{p\leq y} \frac{1}{p}+O(1)=2\log\log y+O(1),$$ 
and hence the right side of \eqref{Tenenbaum} is $\ll 1/(\log y)$. Nevertheless, using Tenenbaum's ideas, we show that \eqref{Tenenbaum} is valid whenever $T\geq \alpha$. 

\begin{lem}\label{MaxFDistance}
Let $y\geq 2$ and $f \in \mathcal{F}$ such that $f(p)=0$ for $p>y$. Let $F(s)$ be its corresponding Dirichlet series.  Then, for all real numbers $0< \alpha\leq 1$ and $T\geq \alpha$ we have 
$$
\max_{|t|\leq T} \big|F(1+\alpha+it)|\ll (\log y)\exp\big(-\mathcal{M}(f; y, T)\big).
$$
\end{lem}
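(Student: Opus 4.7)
The plan is to split into two regimes based on the size of $\alpha$ relative to $1/\log y$ and to reduce the harder regime to the easier one via a factorization together with an averaging argument that essentially uses the hypothesis $T \geq \alpha$.

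\emph{Regime 1: $\alpha \leq 1/\log y$.} Expanding the Euler product of $F$ and absorbing the $k \geq 2$ prime-power contribution into $O(1)$ yields
$$\log|F(1+\alpha+it)| = \sum_{p \leq y} \frac{\textup{Re}(f(p)p^{-it})}{p^{1+\alpha}} + O(1).$$
Since $\alpha \log p \leq 1$ on this range, the inequality $|p^{-\alpha}-1| \leq \alpha \log p$ gives $\sum_{p \leq y}(1-p^{-\alpha})/p \leq \alpha \sum_{p \leq y}(\log p)/p \ll \alpha \log y \ll 1$. Hence $\log|F(1+\alpha+it)| = \log\log y - \mathbb{D}(f,n^{it};y)^2 + O(1)$, and taking the maximum over $|t| \leq T$ completes this case.

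\emph{Regime 2: $\alpha > 1/\log y$.} Put $y_\alpha := e^{1/\alpha} \in (e,y)$ and factor $F = F_1 F_2$ along the partition of primes $p \leq y_\alpha$ and $y_\alpha < p \leq y$. The substitution $v = \alpha \log p$ bounds $\sum_{p > y_\alpha} p^{-1-\alpha} \ll \int_1^\infty e^{-v}/v \, dv = O(1)$, so $|F_2(1+\alpha+it)| = e^{O(1)}$. Applying Regime 1 to $F_1$ (which is $y_\alpha$-smooth with parameter $\alpha = 1/\log y_\alpha$) gives
$$\max_{|t|\leq T}|F(1+\alpha+it)| \ll \frac{1}{\alpha}\exp(-\mathcal{M}_\alpha), \qquad \mathcal{M}_\alpha := \min_{|t|\leq T}\mathbb{D}(f,n^{it};y_\alpha)^2.$$
The proof then reduces to showing $\mathcal{M}(f;y,T) \leq \mathcal{M}_\alpha + \log(\alpha \log y) + O(1)$, which converts this into $(\log y)\exp(-\mathcal{M}(f;y,T))$ as desired.

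\emph{The main obstacle is precisely this comparison.} Writing $\mathbb{D}(f,n^{it};y)^2 = \mathbb{D}(f,n^{it};y_\alpha)^2 + \textup{tail}(t)$ with $\textup{tail}(t) := \sum_{y_\alpha < p \leq y}(1-\textup{Re}(f(p)p^{-it}))/p$, the naive bound $\textup{tail}(t) \leq 2\log(\alpha\log y)$ is off by a factor of two in the logarithm --- exactly the factor that would spoil the lemma. To close the gap, I let $t^* \in [-T,T]$ minimize $\mathbb{D}(f,n^{it};y_\alpha)^2$ and average over $I := [t^* - \alpha/2, t^* + \alpha/2] \cap [-T,T]$, which has length $\geq \alpha/2$ thanks to the hypothesis $T \geq \alpha$. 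The Lipschitz bound $\partial_t \mathbb{D}(f,n^{it};y_\alpha)^2 \ll \sum_{p \leq y_\alpha}(\log p)/p \ll 1/\alpha$ gives $\mathbb{D}(f,n^{it};y_\alpha)^2 = \mathcal{M}_\alpha + O(1)$ for $t \in I$, and a direct computation of $\int_I p^{-it}\,dt$ yields
$$\frac{1}{|I|}\int_I \textup{tail}(t)\,dt = \sum_{y_\alpha < p \leq y}\frac{1}{p}\left[1 - \textup{Re}(f(p)p^{-it^*}) \cdot \frac{\sin(|I|\log p/2)}{|I|\log p/2}\right].$$
Since $|I|\log p \gtrsim \alpha \log p \geq 1$ on this range, the sinc factor satisfies $|\sin(u)/u| \leq 1/u$ and contributes a total of $O\bigl(\sum_{p > y_\alpha} 1/(\alpha p\log p)\bigr) = O(1)$ (using the convergence $\sum_{p > y_\alpha} 1/(p \log p) \ll \alpha$). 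The average of $\textup{tail}(t)$ on $I$ is therefore $\log(\alpha\log y) + O(1)$, so some $t \in I \subset [-T,T]$ satisfies $\textup{tail}(t) \leq \log(\alpha\log y)+O(1)$; combining with the Lipschitz estimate, $\mathcal{M}(f;y,T) \leq \mathbb{D}(f,n^{it};y)^2 \leq \mathcal{M}_\alpha + \log(\alpha\log y) + O(1)$, which is the required comparison.
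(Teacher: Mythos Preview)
Your argument is correct and follows essentially the same route as the paper's: both split at $\alpha = 1/\log y$, truncate the Euler product at $y_\alpha = e^{1/\alpha}$ (the paper's $A$), and then recover the full sum over $p\leq y$ by averaging in $t$ over an interval of length $\asymp \alpha$ inside $[-T,T]$, using the sinc kernel to suppress the contribution of primes $p>y_\alpha$; the paper packages this as a direct bound on $\re\sum_{p\leq A}f(p)p^{-1-it}$ by $\max_{|u|\leq T}\re\sum_{p\leq y}f(p)p^{-1-iu}$, while you phrase it as the equivalent inequality $\mc{M}(f;y,T)\leq \mc{M}_\alpha+\log(\alpha\log y)+O(1)$. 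One cosmetic slip: when $t^*$ lies within $\alpha/2$ of $\pm T$ your interval $I$ is not centered at $t^*$, so the displayed average should carry $p^{-ic}$ with $c$ the midpoint of $I$ rather than $p^{-it^*}$ --- this is harmless, since you only use the magnitude bound $|\sin u/u|\leq 1/u$ on the sinc factor afterward.
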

\begin{proof} Note that
\begin{equation}\label{MFYT}
\mathcal{M}(f; y, T)= \log_2 y- \max_{|t|\leq T} \re \sum_{p\leq y} \frac{f(p)}{p^{1+it}}+O(1).
\end{equation}
We first remark that the result is trivial if $\alpha\leq 1/\log y$, since in this case we have 
$$
\log |F(1+\alpha+it)|= \re \sum_{p\leq y} \frac{f(p)}{p^{1+\alpha+it}}+O(1)= \re \sum_{p\leq y} \frac{f(p)}{p^{1+it}}+O(1),
$$
which follows from the fact that $|p^{\alpha}-1|\ll \alpha\log p$.\\
Now, suppose that $\alpha\geq 1/\log y$ and put $A=\exp(1/\alpha)$. Then we have
$$ \log |F(1+\alpha+it)| =\re \sum_{p\leq y} \frac{f(p)}{p^{1+\alpha+it}}+O(1)= \re \sum_{p\leq A} \frac{f(p)}{p^{1+\alpha+it}}+O(1)=\re \sum_{p\leq A} \frac{f(p)}{p^{1+it}}+O(1),$$
since  $\sum_{p>A} p^{-1-\alpha}\ll 1$ by the prime number theorem. Furthermore, for any $|\beta|\leq \alpha/2$ we have 
$$\sum_{p\leq A} \frac{f(p)}{p^{1+i(t+\beta)}}=\sum_{p\leq A} \frac{f(p)}{p^{1+it}}+O(1),$$
and hence
$$ \max_{|t|\leq T} \big|F(1+\alpha+it)|\ll \max_{|t|\leq (T-\alpha/2)} \exp\left(\re\sum_{p\leq A} \frac{f(p)}{p^{1+it}}\right).
$$
Now, let $|t|\leq T-\alpha/2$ be a real number. Then, we have
\begin{align*}
\int_{t-\alpha/2}^{t+\alpha/2}  \re\left(\sum_{p\leq y} \frac{f(p)}{p^{1+iu}}\right) du
&= \re \sum_{p\leq y}\frac{f(p)}{p^{1+it}}\left(\frac{p^{i\alpha/2}-p^{-i\alpha/2}}{i\log p}\right)\\
&=\alpha \left(\re\sum_{p\leq A} \frac{f(p)}{p^{1+it}}\right) + O\left(\alpha + \sum_{p>A} \frac{1}{p\log p}\right).
\end{align*}
Since $\sum_{p>A} (p\log p)^{-1}\ll \alpha$ by the prime number theorem, we deduce that
$$\re\sum_{p\leq A}\frac{f(p)}{p^{1+it}}=\frac{1}{\alpha} \int_{t-\alpha/2}^{t+\alpha/2} \re\left(\sum_{p\leq y} \frac{f(p)}{p^{1+iu}}\right) du +O(1)\leq \max_{|t|\leq T} \re \sum_{p\leq y} \frac{f(p)}{p^{1+it}}+O(1).$$
Appealing to \eqref{MFYT} completes the proof.
\end{proof}

We finish this section by proving a slightly stronger form of Theorem \ref{LogarithmicMean}, which we shall need to prove Theorems \ref{MCHIUP1} and \ref{MCHIUP2}. One can also show that the following result follows from Theorem \ref{LogarithmicMean}, so it is in fact equivalent to it. 
\begin{thm}\label{LogarithmicMean2}
Let $f\in \mathcal{F}$ and $x, y\geq 2$ be real numbers. Then, for any real number $0< T\leq 1$ we have
$$\sum_{\substack{n\leq x\\ n\in \mathcal{S}(y)}}\frac{f(n)}{n}\ll (\log y) \cdot \exp\big(-\mc{M}(f;y,T)\big)+\frac{1}{T},
$$
where the implicit constant is absolute, and $\mc{S}(y)$ is the set of $y$-friable numbers.
\end{thm}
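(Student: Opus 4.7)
The plan is to deduce Theorem \ref{LogarithmicMean2} from Theorem \ref{LogarithmicMean} by a standard truncation trick. I would introduce the auxiliary completely multiplicative function $f_y$ defined by $f_y(p) := f(p)$ for primes $p \leq y$ and $f_y(p) := 0$ for $p > y$. Since $|f_y(n)| \leq 1$ for all $n$, we have $f_y \in \mc{F}$. The key observation is that $f_y(n) = f(n)$ whenever $n$ is $y$-friable, and $f_y(n) = 0$ otherwise (since then some prime factor of $n$ exceeds $y$, where $f_y$ vanishes). Hence
\[
\sum_{\substack{n \leq x \\ n \in \mc{S}(y)}} \frac{f(n)}{n} = \sum_{n \leq x} \frac{f_y(n)}{n}.
\]

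Focusing on the substantive case $y \leq x$ (when $y > x$ the friability condition is vacuous and one reduces to $y \leq x$ by replacing $y$ with $\min(x,y)$), I would apply Theorem \ref{LogarithmicMean} to $f_y$ with parameter $x$ to obtain
\[
\sum_{n \leq x} \frac{f_y(n)}{n} \ll (\log x) \exp\bigl(-\mc{M}(f_y; x, T)\bigr) + \frac{1}{T}.
\]
The core computation is then to convert $\mc{M}(f_y; x, T)$ into $\mc{M}(f; y, T)$. For any real $t$, the definition of $f_y$ together with Mertens's theorem gives
\[
\mb{D}(f_y, n^{it}; x)^2 = \mb{D}(f, n^{it}; y)^2 + \sum_{y < p \leq x} \frac{1}{p} = \mb{D}(f, n^{it}; y)^2 + \log\bigl(\log x / \log y\bigr) + O(1),
\]
where the error term is absolute and, crucially, independent of $t$. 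Minimizing over $|t| \leq T$ on both sides yields
\[
\mc{M}(f_y; x, T) = \mc{M}(f; y, T) + \log\bigl(\log x / \log y\bigr) + O(1),
\]
so that $(\log x) \exp(-\mc{M}(f_y; x, T)) \asymp (\log y) \exp(-\mc{M}(f; y, T))$, proving the claimed bound.

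There is no real obstacle here: all of the analytic content is packaged inside Theorem \ref{LogarithmicMean}, and the remainder is bookkeeping. The only points to verify are that $f_y$ is indeed an element of $\mc{F}$ (immediate from complete multiplicativity and $|f_y| \leq 1$), and that the Mertens correction in the distance expression does not depend on $t$, so that the minimum over $|t| \leq T$ commutes cleanly with the additive shift. Both are straightforward, and the only other tool needed is the standard form of Mertens's theorem for $\sum_{y < p \leq x} 1/p$.
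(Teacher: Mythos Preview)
Your approach differs from the paper's: rather than deducing the result from Theorem~\ref{LogarithmicMean}, the paper reproves it directly from the underlying ingredients (Theorem~\ref{MontgomeryVaughan} together with Lemma~\ref{MaxFDistance}), applying them to the truncated function $g=f_y$ and bounding the integrand $H_T(\alpha)$ in two ranges of $\alpha$. The paper in fact remarks that your route is available (``One can also show that the following result follows from Theorem~\ref{LogarithmicMean}''), and for the substantive range $y\le x$ your deduction is both correct and pleasantly short: the Mertens identity
\[
\mb{D}(f_y,n^{it};x)^2=\mb{D}(f,n^{it};y)^2+\log\!\Big(\tfrac{\log x}{\log y}\Big)+O(1)
\]
is exactly what is needed, and the additive shift is independent of $t$, so the minima match up and the $\log x$ becomes $\log y$.

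There is, however, a genuine gap in your treatment of $y>x$. Your reduction ``replace $y$ by $\min(x,y)$'' yields, via Theorem~\ref{LogarithmicMean}, the bound $(\log x)\exp(-\mc{M}(f;x,T))+1/T$; but the target is $(\log y)\exp(-\mc{M}(f;y,T))+1/T$, and these are \emph{not} trivially comparable. Indeed, since $(\log z)\exp(-\mc{M}(f;z,T))\asymp\max_{|t|\le T}\exp\big(\re\sum_{p\le z}f(p)p^{-1-it}\big)$, one would need
\[
\max_{|t|\le T}\re\sum_{p\le x}\frac{f(p)}{p^{1+it}}\ \le\ \max_{|t|\le T}\re\sum_{p\le y}\frac{f(p)}{p^{1+it}}+O(1),
\]
and the maximizing $t$ on the right may be quite different from that on the left (for instance, if $f(p)=1$ for $p\le x$ and $f(p)=-1$ for $x<p\le y$, the left side is attained at $t=0$ while the right side requires $t\asymp 1/\log x$ to avoid a large negative contribution from $(x,y]$). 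Proving this inequality essentially amounts to the averaging trick in the proof of Lemma~\ref{MaxFDistance}, which is precisely the extra input the paper's direct proof supplies. The gap is fillable, but not by the one-line dismissal you give; and since the paper's applications (in the unconditional case $y=q\ge N/d=x$) actually require $y\ge x$, the case cannot simply be ignored. The paper's route avoids the issue altogether because the factor $\log y$ there comes from bounding $|G(1+s)|$ via Lemma~\ref{MaxFDistance}, not from any comparison between $x$ and $y$.
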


\begin{proof}
First, observe that the result is trivial if $T\leq 1/\log x$, since we have in this case
$$\sum_{\substack{n\leq x\\ n\in \mathcal{S}(y)}}\frac{f(n)}{n}\ll \sum_{n\leq x}\frac{1}{n}\ll \log x\ll \frac{1}{T}.$$
Now assume that $1/\log x<T\leq 1$. Let $g$ be the completely multiplicative function such that $g(p)=f(p)$ for $p\leq y$ and $g(p) = 0$ otherwise, and let $G$ be its corresponding Dirichlet series. Then, it follows from Theorem \ref{MontgomeryVaughan} that
\begin{equation}\label{SmoothFG}
\sum_{\substack{n\leq x\\ n\in \mathcal{S}(y)}} \frac{f(n)}{n}=
\sum_{n\leq x}\frac{g(n)}{n}\ll \frac{1}{\log x}\int_{1/\log x}^1 \frac{H_{T}(\alpha)}{\alpha} d\alpha, 
\end{equation}
where 
$$
H_{T}(\alpha)=\left(\sum_{k=-\infty}^{\infty} \max_{s\in \mathcal{A}_{k, T}(\alpha)} \left|\frac{G(1+s)}{s}\right|^2\right)^{1/2}.
$$
First, observe that if $|t-kT|\leq T/2$ and $k\neq 0$ then $|t|\asymp |k|T$.  Moreover, uniformly for all $t\in \mathbb{R}$, we have 
\begin{equation}\label{TrivialBoundF}
|G(1+\sigma+it)|\leq \zeta(1+\sigma)\ll \frac{1}{\sigma}.
\end{equation}
We will first bound $H_{T}(\alpha)$ when $\alpha>T$.  Using \eqref{TrivialBoundF} we obtain in this case
\begin{equation}\label{BigAlpha}
\alpha^2 \cdot H_{T}(\alpha)^2\ll \sum_{k=-\infty}^{\infty} \max_{|t-kT|\leq T/2}\frac{1}{\alpha^2 +t^2}\ll \sum_{|k|> \alpha/T}\frac{1}{k^2T^2}+ \sum_{|k|\leq \alpha/T} \frac{1}{\alpha^2}\ll \frac{1}{\alpha T}.
\end{equation}
Now, suppose that $0<\alpha\leq T$. To bound $H_{T}(\alpha)$ in this case, we first use \eqref{TrivialBoundF} for $|k|\geq 1$. This gives
$$
H_{T}(\alpha)^2 
\ll \frac{1}{\alpha^2}\sum_{|k|\geq 1} \frac{1}{k^2T^2}+\frac{1}{\alpha^2}\max_{s\in \mathcal{A}_{0, T}(\alpha)} |G(1+s)|^2\ll \frac{1}{(\alpha T)^2}+ \frac{1}{\alpha^2}\max_{s\in \mathcal{A}_{0, T}(\alpha)} |G(1+s)|^2.
$$
Furthermore, by  \eqref{TrivialBoundF} and Lemma \ref{MaxFDistance} we have
\begin{align*}
\max_{s\in \mathcal{A}_{0, T}(\alpha)} |G(1+s)|
&\ll \max_{\substack{|t|\leq T\\ \sigma\geq T}} |G(1+\sigma+it)|+ \max_{\substack{|t|\leq T\\ \alpha \leq \sigma\leq T}} |G(1+\sigma+it)|\\
&\ll \frac{1}{T}+ (\log y)\exp\big(-\mathcal{M}(g; y, T)\big).
\end{align*}
Since $\mathcal{M}(g; y, T)=\mathcal{M}(f; y, T)$ we deduce that for $0<\alpha\leq T$ we have
\begin{equation}\label{SmallAlpha}
H_{T}(\alpha)^2\ll \frac{1}{(\alpha T)^2}+\frac{(\log y)^2}{\alpha^2}\exp\left(-2\mathcal{M}(f; y, T)\right).
\end{equation}
Using \eqref{BigAlpha} when $T<\alpha\leq 1$ and \eqref{SmallAlpha} when $1/\log x\leq \alpha\leq T$ we get
\begin{align*}
\int_{1/\log x}^1 \frac{H_{T}(\alpha)}{\alpha} d\alpha 
&\ll \left(\frac{1}{T}+ \log y \cdot \exp\left(-\mathcal{M}(f; y, T)\right)\right)\int_{1/\log x}^{T} \frac{1}{\alpha^2}d\alpha+  \frac{1}{T^{1/2}}\int_{T}^1\frac{1}{\alpha^{5/2}}d\alpha\\
&\ll \frac{\log x}{T}+ (\log x)(\log y) \exp\left(-\mathcal{M}(f; y, T)\right). 
\end{align*}
Inserting this bound in \eqref{SmoothFG} yields the result.

\end{proof}
\section{Proofs of Theorems \ref{COND}, \ref{MCHIUP1} and \ref{MCHIUP2}}

To prove Theorem \ref{COND}, the general strategy we use is that of \cite{GrSo2} (with the refinements from \cite{GOLD}), and it will be clear where we shall make use of Theorem \ref{LogarithmicMean2}. We will consider the conditional (on GRH) and unconditional results simultaneously, setting $y := \log^{12} q$ if we are assuming GRH, and setting $y := q$ otherwise. We recall here that $y=Q$ in the unconditional case, and $y = Q^{12}$ on GRH, so that in all cases we have $\log y \asymp \log Q$.\\
 When $\chi$ is primitive and $\alpha \in \mb{R}$, we have
\begin{equation*}
\sum_{n \leq q} \frac{\chi(n)}{n}e(n\alpha) = \sum_{n \leq q \atop n \in \mc{S}(y)}\frac{\chi(n)}{n}e(n\alpha) + O(1);
\end{equation*}
on GRH, this follows from \eqref{APPROXFRIABLE}, and unconditionally this statement is trivial.
Inserting this estimate in P\'olya's Fourier expansion \eqref{Polya} gives
\begin{equation*}
M(\chi)\ll \sqrt{q}\left( \max_{\alpha \in [0,1]} \left|\sum_{1\leq |n| \leq q \atop n \in \mc{S}(y)} \frac{\chi(n)}{n}\big(1-e(n\alpha)\big)\right| + 1\right).
\end{equation*}
Therefore, to prove Theorem \ref{COND} it suffices to show that for all $\alpha\in [0, 1]$ we have 
\begin{equation}\label{BoundExponentialSum}
\sum_{1\leq |n| \leq q \atop n \in \mc{S}(y)}\frac{\chi(n)}{n}e\left(n\alpha\right) \ll \Big(1-\chi(-1)\xi(-1)\Big)\frac{\sqrt{m}}{\phi(m)}(\log Q ) e^{-\mc{M}(\chi\bar{\xi};Q, (\log Q)^{-7/11})}+ \left(\log Q\right)^{\frac{9}{11} + o(1)}.
\end{equation}
Let $\alpha\in [0, 1]$ and $R:= (\log Q)^{5}$. By Dirichlet's theorem on Diophantine approximation, there exists a rational approximation $|\alpha - b/r| \leq 1/rR$, with $1\leq r\leq R$ and $(b, r)=1$. 
Let $M := (\log Q)^{4/11}$. We shall distinguish between two cases. If $r\leq M$, we say that $\alpha$ lies on a \emph{major} arc, and if $M<r\leq R$ we say that $\alpha$ lies on a \emph{minor} arc. In the latter case, we shall use Corollary 2.2 of \cite{GOLD}, which is a consequence of the work of Montgomery and Vaughan \cite{MV2}. Indeed, this shows that
\begin{equation*}
\sum_{1\leq |n| \leq q \atop n \in \mc{S}(y)} \frac{\chi(n)}{n}e\left(n\alpha\right) \ll \frac{(\log M)^{5/2}}{\sqrt{M}}\log y +\log R+\log_2 y \ll \left(\log Q\right)^{\frac{9}{11} + o(1)}.
\end{equation*}
We now handle the more difficult case of $\alpha$ lying on a major arc. First, it follows from Lemma 4.1 of \cite{GOLD} (which is a refinement of Lemma 6.2 of \cite{GrSo2}) that for $N := \min\{q,|r\alpha - b|^{-1}\}$, we have
\begin{equation}\label{DIVSUM}
\begin{aligned}
\sum_{1\leq |n| \leq q \atop n \in \mc{S}(y)} \frac{\chi(n)}{n}e(n\alpha) &= \sum_{1\leq |n| \leq N \atop n \in \mc{S}(y)} \frac{\chi(n)}{n}e\left(\frac{nb}{r}\right) + O\left(\frac{(\log R)^{3/2}}{\sqrt{R}}(\log y)^2 +\log R+\log_2 y\right)\\
&= \sum_{1\leq |n| \leq N \atop n \in \mc{S}(y)} \frac{\chi(n)}{n}e\left(\frac{nb}{r}\right) + O\left(\log_2 Q\right).
\end{aligned}
\end{equation}
We first assume that $b\neq 0$. In this case we can use an identity of Granville and Soundararajan (see Proposition 2.3 of \cite{GOLD}) which asserts that
\begin{equation}\label{GrSoIdentity}
\begin{aligned}
&\sum_{1\leq |n| \leq N \atop n \in \mc{S}(y)} \frac{\chi(n)}{n}e\left(\frac{nb}{r}\right)\\
&=\Big(1-\chi(-1)\psi(-1)\Big)\sum_{d\mid r \atop d \in \mc{S}(y)} \frac{\chi(d)}{d}\cdot \frac{1}{\phi(r/d)} \sum_{\psi \bmod r/d}\tau(\psi) \bar{\psi}(b) 
\left(\sum_{n \leq N/d \atop n \in \mc{S}(y)} \frac{\chi(n)\bar{\psi}(n)}{n}\right).
\end{aligned}
\end{equation}
To bound the inner sum above, we appeal to Theorem \ref{LogarithmicMean2} with $T=(\log Q)^{-7/11}$. This implies that
$$
\sum_{n \leq N/d \atop n \in \mc{S}(y)} \frac{\chi(n)\bar{\psi}(n)}{n}\ll (\log y)\cdot \exp\left(-\mc{M}(\chi\bar{\psi};y, (\log Q)^{-7/11})\right)+(\log Q)^{7/11}.
$$
Moreover, in the conditional case  $y = Q^{12}$, and thus we have
$$ \mc{M}(\chi\bar{\psi};y, (\log Q)^{-7/11})\geq \mc{M}(\chi\bar{\psi};Q, (\log Q)^{-7/11}) +O(1).$$
Therefore, we get
\begin{equation}\label{Thm1.3}
\sum_{n \leq N/d \atop n \in \mc{S}(y)} \frac{\chi(n)\bar{\psi}(n)}{n}\ll (\log Q)\cdot \exp\left(-\mc{M}(\chi\bar{\psi};Q, (\log Q)^{-7/11})\right)+(\log Q)^{7/11}.
\end{equation}
We now order the primitive characters $\psi\pmod \ell$ for $\ell\leq M$ (including the trivial character $\psi$ which equals $1$ for all integers)  as $\{\psi_k\}_k$, where
\begin{equation*}
\mc{M}(\chi\bar{\psi_k};Q, (\log Q)^{-7/11}) \leq \mc{M}(\chi\bar{\psi_{k+1}};Q, (\log Q)^{-7/11}), 
\end{equation*}
for all $k \geq 1$. Note that $\psi_1=\xi$, in the notation of Theorem \ref{COND}. Furthermore, by a slight variation of Lemma 3.1 of \cite{BGS} we have  
$$
 \mc{M}\left(\chi\bar{\psi_k};Q, (\log Q)^{-7/11}\right) \geq 
\left(1-\frac{1}{\sqrt{k}}\right)\log_2 Q+O\left(\sqrt{\log_2 Q}\right).
$$
Therefore, if $\psi \pmod \ell$ is induced by $\psi_{k}$, then
\begin{equation}\label{Repulsive}
\begin{aligned}
\mc{M}\left(\chi\bar{\psi};Q, (\log Q)^{-7/11}\right) &\geq \mc{M}\left(\chi\bar{\psi_k};Q, (\log Q)^{-7/11}\right) +O\left(\sum_{p \mid \ell} \frac{1}{p}\right)\\
 &\geq 
\left(1-\frac{1}{\sqrt{k}}+o(1)\right)\log_2 Q,
\end{aligned}
\end{equation}
since $\sum_{p\mid \ell}1/p\ll \log_2 \ell\ll \log_3 Q$. 
Inserting this bound in \eqref{Thm1.3}, we deduce that the contribution of all characters $\psi$ that are induced by some $\psi_k$ with $k\geq 3$ to \eqref{GrSoIdentity} is
$$
\ll (\log Q)^{7/11}\sum_{d\mid r} \frac{1}{d\phi(r/d)} \sum_{\psi \bmod r/d}|\tau(\psi)|\ll  (\log Q)^{7/11}\sum_{d\mid r} \frac{\sqrt{r}}{d^{3/2}}\ll (\log Q)^{9/11},
$$
since $1/\sqrt{3}<7/11$, $|\tau(\psi)|\leq \sqrt{r/d}$, and $r\leq (\log Q)^{4/11}.$ Moreover, 
observe that there is at most one character $\psi \pmod{r/d}$ such that $\psi$ is induced by $\psi_2$. Using \eqref{Repulsive}, we deduce that the contribution of these characters to  \eqref{GrSoIdentity} is
$$ \ll (\log Q)^{1/\sqrt{2}+o(1)}\sum_{d\mid r} \frac{1}{d}\cdot\frac{\sqrt{r/d}}{\phi(r/d)} \ll (\log Q)^{1/\sqrt{2}+o(1)}\log r \ll(\log Q)^{1/\sqrt{2}+o(1)}. $$
Thus, it now remains to estimate the contribution of the characters $\psi\bmod r/d$ that are induced by $\xi$, recalling that $\xi$ has conductor $m$. If $m\nmid r$, there are no such characters $\psi$ and the theorem follows in this case. If $m\mid r$ and $\psi\bmod r/d$ is induced by $\xi$, then we must have $d \mid (r/m)$. Furthermore, by Lemma 4.1 of \cite{GrSo2} we have 
$$\tau(\psi)= \mu\left(\frac{r}{dm}\right)\xi\left(\frac{r}{dm}\right)\tau(\xi).$$
Therefore, the contribution of these characters to \eqref{GrSoIdentity} is
\begin{equation}\label{INDUCED}
\Big(1-\chi(-1)\xi(-1)\Big)\bar{\xi}(b)\tau(\xi)\sum_{d\mid (r/m) \atop d \in \mc{S}(y)} \frac{\chi(d)}{d}\cdot \frac{1}{\phi(r/d)}  \mu\left(\frac{r}{dm}\right)\xi\left(\frac{r}{dm}\right)
\sum_{\substack{n \leq N/d\\ (n, r/d)=1\\ n \in \mc{S}(y)}} \frac{\chi(n)\bar{\xi}(n)}{n}.
\end{equation}
Furthermore, it follows from Lemma 4.4 of \cite{GrSo2} that
\begin{align*}
\sum_{\substack{n \leq N/d\\ (n, r/d)=1\\ n \in \mc{S}(y)}} \frac{\chi(n)\bar{\xi}(n)}{n} &= \sum_{\substack{n \leq N\\ (n, r/d)=1\\ n \in \mc{S}(y)}} \frac{\chi(n)\bar{\xi}(n)}{n} +O(\log d)\\
&= \prod_{p \mid  \frac rd} \left(1-\frac{\chi(p)\bar{\xi}(p)}{p}\right) \sum_{n \leq N \atop n \in \mc{S}(y)} \frac{\chi(n)\bar{\xi}(n)}{n} + O\left((\log_2 Q)^2\right).
\end{align*}
Thus, in view of Theorem \ref{LogarithmicMean2}, we deduce that \eqref{INDUCED} is 
\begin{equation}\label{INDUCED2}
\begin{aligned} 
\ll &\Big(1-\chi(-1)\xi(-1)\Big)\sqrt{m}\left((\log Q)e^{-\mc{M}\left(\chi\bar{\xi};Q, (\log Q)^{-7/11}\right)}+(\log Q)^{7/11}\right)\\
& \times \sum_{\substack{d\mid (r/m)\\ (r/(dm), m)=1}} \frac{1}{d\phi(r/d)}  \mu^2\left(\frac{r}{dm}\right) \prod_{p \mid \frac{r}{dm}} \left(1+\frac{1}{p}\right).
\end{aligned}
\end{equation}
Finally, by a change of variables $a= r/(md)$, we obtain
\begin{align*}
\sum_{\substack{d\mid (r/m)\\ (r/(dm), m)=1}} \frac{1}{d\phi(r/d)}  \mu^2\left(\frac{r}{dm}\right) \prod_{p \mid  \frac{r}{dm}} \left(1+\frac{1}{p}\right)
&= \frac{m}{r\phi(m)}\sum_{\substack{a\mid (r/m)\\ (a, m)=1}} \frac{a}{\phi(a)}  \mu^2\left(a\right) \prod_{p \mid  a} \left(1+\frac{1}{p}\right)\\
&\leq \frac{1}{\phi(m)}\cdot \frac{1}{r/m} \prod_{p \mid (r/m)} \left(1+\frac{p+1}{p-1}\right)\leq \frac{4}{\phi(m)}, 
\end{align*}
since $2p/(p-1)\leq p$ for all primes $p\geq 3$. Combining this bound with  \eqref{INDUCED2}, it follows that the contribution of the characters $\psi$ that are induced by $\xi$ to \eqref{GrSoIdentity} is
$$\ll \Big(1-\chi(-1)\xi(-1)\Big)\frac{\sqrt{m}}{\phi(m)}(\log Q) e^{-\mc{M}\left(\chi\bar{\xi};Q, (\log Q)^{-7/11}\right)}+(\log Q)^{7/11}. $$

It thus remains to consider when $b = 0$, and hence $r = 1$. First, if $\xi$ is identically $1$ (so $m=1$), then a trivial application of Theorem \ref{LogarithmicMean2} shows that in this case
\begin{equation*}
\sum_{1\leq |n| \leq N \atop n \in \mc{S}(y)} \frac{\chi(n)}{n} \ll \Big(1-\chi(-1)\Big)\frac{\sqrt{m}}{\phi(m)} (\log Q)e^{-\mc{M}\left(\chi; Q,(\log Q)^{-7/11}\right)} + (\log Q)^{7/11}.
\end{equation*}
On the other hand, if $\xi$ is not the trivial character, then it follows from \eqref{Repulsive} that
$$
\mc{M}(\chi; Q,(\log Q)^{-7/11}) \geq \left(1-\frac{1}{\sqrt{2}}+o(1)\right)\log_2 Q,
$$
and hence by Theorem \ref{LogarithmicMean2} we get
$$ 
\sum_{1\leq |n| \leq N \atop n \in \mc{S}(y)} \frac{\chi(n)}{n}\ll (\log Q)^{1/\sqrt{2}+o(1)},
$$
which completes the proof of \eqref{BoundExponentialSum}. Theorem \ref{COND} follows as well.

We end this section by deducing Theorems \ref{MCHIUP1} and \ref{MCHIUP2} from Theorem \ref{COND} and Proposition \ref{MINDIST}. We shall prove both results simultaneously, by setting $Q:=\log q$ on GRH and $Q:=q$ unconditionally. 
\begin{proof}[Proof of Theorems \ref{MCHIUP1} and \ref{MCHIUP2}]
Let  $\xi$ be the character of conductor $m \leq (\log Q)^{4/11}$ that minimizes $\mc{M}\left(\chi\bar{\psi}; Q,(\log Q)^{-7/11}\right)$. If $\xi$ is even, then it follows from Theorem \ref{COND} that
$$ M(\chi)\ll \sqrt{q} (\log Q)^{9/11+o(1)},$$
which trivially implies the result in this case since $1-\delta_g>9/11$, for all $g\geq 3$.
Now, suppose that $\xi$ is odd and let $k$ be its order. We also let $\beta=1$ if $m$ is an exceptional modulus, and $\beta=0$ otherwise. Then, combining Theorem \ref{COND} and Proposition \ref{MINDIST} (with $\alpha=7/11$) we obtain
\begin{equation}\label{FINALBOUND}
\begin{aligned}
M(\chi) &\ll \frac{\sqrt{qm}}{\phi(m)} (\log Q)^{1-\delta_g} \exp\left(-\frac{c_1(1-\delta_g)}{(gk^{\ast})^2}\log_2 Q+ \beta \e \log m+  O\left(\log_2 m\right)\right) \\
&\ll \sqrt{q}(\log Q)^{1-\delta_g} \exp\left(-\left(\frac{1}{2}-\beta \e\right) \log m- \frac{c_1(1-\delta_g)}{g^2m^2}\log_2 Q+ c_2 \log_2 m\right),
\end{aligned}
\end{equation}
for some  positive constants $c_1, c_2$, since $\phi(m)\gg m/\log_2 m$.  One can easily check that the expression inside the exponential is maximal when $m \asymp \sqrt{\log_2 Q}$, and its maximum equals 
$$ -\left(\frac{1}{4}-\frac{\beta \e}{2}\right) \log_3 Q+ O\left(\log_4 Q\right).$$
Inserting this estimate in \eqref{FINALBOUND} completes the proof.
\end{proof}

\end{document}